\newcommand{\ZZ}{\mathbb{Z}}
\newcommand{\NN}{\mathbb{N}}
\newcommand{\QQ}{\mathbb{Q}}
\newcommand{\RR}{\mathbb{R}}
\newcommand{\TA}{\mathbb{A}}
\newcommand{\TB}{\mathbb{B}}
\newcommand{\TD}{\mathbb{D}}
\newcommand{\TE}{\mathbb{E}}
\newcommand{\tw}{t_{\mathsf{w}}}
\newcommand{\bu}{\overline{u}}
\newcommand{\bx}{\overline{x}}
\newcommand{\mX}{\mathsf{X}}
\newcommand{\mY}{\mathsf{Y}}
\newtheorem{theorem}{Theorem}[section] 
\newtheorem{proposition}[theorem]{Proposition} 
\newtheorem{conjecture}[theorem]{Conjecture} 
\newtheorem{lemma}[theorem]{Lemma}
\newcommand{\po}{\mathscr{P}}
\newcommand*\pFq[4]{{}_{#1}F_{#2}\biggl(\genfrac..{0pt}{}{#3}{#4};1\biggr)}
\newcommand{\down}{\mathscr{D}}
\newcommand{\up}{\mathscr{U}}
\newcommand{\Zeta}{\mathsf{Zeta}}
\newcommand{\tM}{\mathbb{M}}
\newcommand{\haut}{\operatorname{ht}}
\newcommand{\trunc}{\mathsf{T}}
\newcommand{\lap}{\mathcal{L}}
\newcommand{\rt}{r}
\newcommand{\nz}{\operatorname{nz}}
\newcommand{\rev}{\operatorname{Rev}}
\newcommand{\NC}{\mathsf{NC}}
\newcommand{\posi}{\operatorname{posi}}
\newcommand{\nega}{\operatorname{nega}}
\definecolor{racines}{RGB}{197,225,165}
\title{On posets and polytopes attached to arbors}
\author{F. Chapoton}
\date{\today}
\begin{document}

\maketitle

\begin{verse}
\textit{À la mémoire des petits bilbis.}
\end{verse}

\begin{abstract}
  Starting from the data of an arbor, which is a rooted tree with
  vertices decorated by disjoint sets, we introduce a lattice polytope
  and a partial order on its lattice points. We give recursive
  formulas for various classical invariants of these polytopes and
  posets, using the tree structure. For linear arbors, we propose a
  conjecture exchanging the Ehrhart polynomial of the polytope with
  the Zeta polynomial of the poset for the reverse arbor. The general
  motivation comes from a transmutation operator acting on
  $M$-triangles, which should link the posets considered here with
  some kind of generalized noncrossing partitions and generalized
  associahedra. We give some evidence for this relationship in several
  cases, including notably some polytopes, namely halohedra and
  Hochschild polytopes.
\end{abstract}

\section*{Introduction}

The combinatorial data that will be our starting point is a rooted
tree whose vertices are decorated with a partition of a finite
set. Such objects will be called \textit{arbors}. This article
introduces two constructions attached to any arbor $t$. The first
construction is a lattice polytope $Q_t$ inside some $\NN^n$ and the
second one is the partial order $P_t$ defined by coordinate-wise
comparison on the set of lattice points in this polytope.

Before explaining at length the motivation for these constructions
coming from Coxeter groups, cluster theory and representation theory,
let us describe our results.

First, it is shown that the polytopes $Q_t$, initially defined by
inequalities, are lattice polytopes, which means that they have
integral vertices. Moreover, they are simple polytopes and have no
interior lattice point. Their vertices are described in a recursive
way, using a projection map $\pi$ to a product of smaller polytopes in
the same family. The polytopes are also identified as Minkowski sums
of simplices. Basic properties of the posets $P_t$ are also stated:
they are graded with a unique minimum and the interval between this
minimum and any element is a product of total orders. This property
allows to view their Hasse diagrams as skeletons of cubical complexes.

Our next set of results consists of recursive formulas to compute
several invariants of the polytopes $Q_t$ and posets $P_t$. This uses
the projection map $\pi$ and often requires a finer invariant that
includes information about the height, which is the sum of
coordinates. A recursive formula is obtained for the Ehrhart
polynomial of the polytope $Q_t$, which counts lattice points in the
dilates, cf.~\cite{barvinok}. One considers then the function that
measures the volume in the hyperplane sections according to height,
and gets recursive formulas for this function and for its Laplace
transform. Similarly, recursive formulas are given for the Zeta
polynomial, the $M$-triangle and the $f$-vector of the poset
$P_t$. The Zeta polynomial is a classical invariant that counts chains
in the poset, see~\cite[\S 3]{stanley1974}. The $M$-triangle is a
polynomial in two variables, defined in~\cite{chap_2004} for any graded poset,
as a weighted sum of all Möbius numbers, similar to the more classical
characteristic polynomial introduced in~\cite{rota1964}. The
$f$-vector is defined by counting faces of the underlying cubical
complex.

In \Cref{sec:EhrhartZeta}, two conjectures are proposed in the case of
linear arbors. The first one claims that reversing the arbor
upside-down should exchange the Ehrhart polynomial of the polytope
$Q_t$ and the Zeta polynomial of the poset $P_t$. Here both
constructions interact in a rather surprising way. The second
conjecture claims that the $h$-vector is unchanged by reversing a
linear arbor. Here the $h$-vector of an arbor $t$ is the generating
polynomial of elements of $P_t$ according to the number of non-zero
coordinates.

In \Cref{sec:transmutation}, we introduce \textit{transmutation} as a
birational involution acting on the $M$-triangle. Transmutation is
central in the original motivation of the article and appears in the
results in the next sections. Here the reader may want to read the
motivation section below first.

In the five next sections, we gather evidence concerning transmutation
for several specific families of arbors. Let us say that two posets
are \textit{transmuted partners} if they share the same Zeta
polynomial and their $M$-triangles are related by transmutation.

In \Cref{sec:unarybinary}, we propose conjectures about unary-binary
arbors. In this case, one can define from the arbor $t$ a gentle
quiver-with-relations whose lattice of support $\tau$-tilting objects
is finite. Applying the core-label-order construction should lead to
transmuted partners of the posets $P_t$. This is also stated in the
more combinatorial language of Stokes posets for quadrangulations~\cite{baryshnikov, stokes}.

In \Cref{sec:A}, we consider the case of arbors of type $\TA$, meaning
linear arbors with no multiple vertex. We introduce a more general
family of posets, including the posets $P_t$ for arbors of type $\TA$. We
compute the Zeta polynomials and the $M$-triangles of these more general
posets. This implies that noncrossing partitions of Coxeter type $\TA$ are
transmuted partners of the posets $P_t$ for arbors of type $\TA$.

In \Cref{sec:B}, we consider the case of arbors of type $\TB$, meaning
arbors with only one vertex. We introduce a more general family of
posets, including the poset $P_t$ for arbors of type $\TB$. We compute
the Zeta polynomials and the $M$-triangles of these more general
posets. This implies that noncrossing partitions of Coxeter type $\TB$
are transmuted partners of the posets $P_t$ for arbors of type $\TB$.

In \Cref{sec:halohedra}, we turn to arbors with one simple vertex and
one multiple vertex. The aim is to relate them to the family of simple
polytopes named halohedra~\cite{devadoss1, devadoss2}. We prove that
the $h$-vector of the polytope coincides with the $h$-vector of the
arbor.

In \Cref{sec:hochschild}, we turn to arbors that are corollas, namely
with no multiple vertex and such that all non-root vertices are
attached to the root vertex. The aim is to relate these arbors to the
family of Hochschild polytopes~\cite{saneblidze, san_rivera} and the
associated Hochschild lattices~\cite{combe_hoch}. We conjecture that
the core-label-orders of Hochschild lattices are transmuted partners
of the posets $P_t$. We prove that the $h$-vector of the Hochschild
polytope coincides with the $h$-vector of the arbor.

\subsection{Motivation}\label{subsec:motivation}

Let us now try to explain, with the help of the following figure,
where this work comes from.

\tikzstyle{block} = [rectangle, draw, fill=blue!10, 
    text width=9em, text centered, minimum height=1cm, rounded corners]
\tikzstyle{block2} = [rectangle, draw, fill=orange!10, 
    text width=8em, text centered, minimum height=1cm, rounded corners]
\tikzstyle{block3} = [rectangle, draw, fill=green!10, 
    text width=8em, text centered, minimum height=1cm, rounded corners]
\tikzstyle{line} = [draw, -latex']

\begin{center}
\begin{tikzpicture}[node distance =4cm, auto]
    \node at (0,2) [block] (polytope) {simple polytopes (generalized associahedra)};
    \node at (-2,0) [block] (tamari) {congruence-uniform lattices\\ (Cambrian lattices)};
    \node at (0,-2) [block3] (noncrossing) {graded lattices (noncrossing\\ partitions)};
    \node at (2,0) [block2] (posetPt) {graded posets of arbors};
    \node at (6,0) [block2] (polytopeQt) {lattice polytopes\\ of arbors};
    \path [draw, dashed] (posetPt) -- (polytope);
    \path [draw, dashed] (posetPt) -- (noncrossing);
    \path [line] (tamari) -- (noncrossing);
    \path [draw, thick, double] (tamari) -- (polytope);
    \path [draw, thick, double] (posetPt) -- (polytopeQt);
\end{tikzpicture}
\end{center}

Let us start with the leftmost box. The theory of Cambrian lattices~\cite{reading_cambrian} defines, for any finite Coxeter group $W$ and
Coxeter element $c \in W$, a finite lattice. For crystallographic
simply-laced Coxeter groups, these lattices can also be described
inside the theory of cluster algebras, using green mutations, see~\cite{keller_green} for a survey. There is an alternative
representation-theoretic viewpoint using inclusion of torsion classes
for finite-dimensional algebras, considered also using support
$\tau$-tilting objects~\cite{tau_tilting}. It is known that finite
lattices of torsion-classes are congruence-uniform~\cite{garver_lattice, DIRRT}. It is also known that Cambrian lattices
are congruence-uniform, as quotients of the weak order on the Coxeter
group~\cite{reading_cambrian}.




Let us move to the top box, closely tied to the previous one. It is
known that the Hasse diagrams of Cambrian lattices are skeletons of
some simple polytopes, sometimes called generalized associahedra
\cite{CFZ}. In this setting,
one can define the $F$-triangle, a polynomial in two variables that
refines the $f$-vector of the polytope \cite{chap_2004} and depends on
the choice of a distinguished vertex. Its definition, as recalled in
\Cref{app:triangles}, is applied to the polar polytope seen as a
spherical simplicial complex with a distinguished facet.

Consider now the bottom box. Starting from a congruence-uniform
lattice in the leftmost box, one can define another lattice, using the
canonical join representation. This construction was introduced by
Reading in~\cite[\S 9.7]{reading_regions} under the name of
\textit{shard-intersection-order}. This is also known as the
\textit{core-label-order}, in the terminology
of~\cite{muhle_core}. For Cambrian lattices, the core-label-order only
depends on $W$ and recovers the $W$-noncrossing-partitions lattice of
Brady-Watt~\cite{brady_watt} and Bessis~\cite{bessis}. As these
lattices are graded, one can define their $M$-triangles as recalled in
\Cref{app:triangles}.



In the Cambrian context, the $F$-triangle of the generalized
associahedra is related to the $M$-triangle of the lattice of
$W$-noncrossing-partitions by the explicit birational
transformations~\eqref{M_from_F} and~\eqref{F_from_M} recalled in
\Cref{app:triangles}. This equality was conjectured
in~\cite{chap_2004} and proved in~\cite{athanasiadis}. These
birational transformations are expected to be more widely applicable
in similar contexts, see for example~\cite{garver_nonkissing}. Note
that this birational relationship implies that $h$-vectors of
generalized associahedra also enumerate non-crossing partitions
according to their rank.

\smallskip

All this forms a well-traveled landscape, involving congruence-uniform
lattices and directed skeletons of polytopes. Our main claim is that
there should exist another aspect of this story, related to the
previous landscape by transmutation.

The \textit{core idea} is that some posets from the bottom box (noncrossing
partitions and similar posets) should be \textit{transmuted partners}
of some posets from the middle box (associated with arbors). By this
expression, we mean that two posets share the same Zeta polynomial and
that their $M$-triangles are related by an explicit birational
involution that we choose to call \textit{transmutation}.

The starting point for the project was in fact in the other direction:
starting from the classical lattices of noncrossing partitions, we
have found their transmuted partners. This has led us to the present
framework of arbors.

We will prove in \Cref{sec:A} that for linear arbors in which every
vertex has only one element, the transmuted $M$-triangle is indeed the
$M$-triangle of the noncrossing-partitions lattice of type
$\TA$. Moreover, we prove that they share the same Zeta
polynomial, so they are indeed transmuted partners as claimed.

In a similar vein, it is shown in \Cref{sec:B} that the noncrossing
partitions lattices of type $\TB$ are transmuted partners of the posets
$P_t$ for arbors with one vertex.

\begin{center}
  $\clubsuit$~$\clubsuit$~$\clubsuit$
\end{center}

Having introduced a general theory on the arbor side, it is natural to
ask if one can use transmutation in the other direction. 

The best hope would be the following statement for every arbor $t$:

There should exist a simple polytope $\Phi_t$, some way to orient its
skeleton to find the Hasse diagram of a congruence-uniform
``Cambrian-like'' lattice $\Lambda_t$ and some ``noncrossing-like''
lattice $\NC_t$ built from $\Lambda_t$ by the ``core-label-order''
construction. The lattice $\NC_t$ should be a transmuted partner of
$P_t$.  If the arbor $t$ has size $n$ and the poset $P_t$ has $N$
elements, then $\Phi_t$ should be a simple polytope of dimension $n$
with $N$ vertices.

There is so far no idea on how to achieve that in general. A proposal
for unary-binary arbors is made in \Cref{sec:unarybinary} in terms of gentle algebras.

\smallskip

A much weaker statement would be to produce starting from $t$ the
simple polytope $\Phi_t$ together with a distinguished vertex (as in
the top box) whose $F$-triangle can be transformed by formulas in
\Cref{app:triangles} into an $M$-triangle (as in the bottom box) that
would be the transmutation of the $M$-triangle of $P_t$. In
particular, \Cref{transmuted_h} says that the $h$-vector of $t$ is
then the diagonal of the $M$-triangle of the lattice $\NC_t$, hence
describes the grading of this poset and also the $h$-vector of the
polytope $\Phi_t$.

Again, there is no known idea on how to do that. Two proposals are
made in \Cref{sec:halohedra} and \Cref{sec:hochschild} that should
relate in this way some arbors with known families of polytopes. The
evidence is given by a coincidence of $h$-vectors, which would follow
from the expected transmutation of $M$-triangles.

Assuming that we can produce the polytope $\Phi_t$ endowed with its
distinguished vertex, one could then consider the simplicial complex
defined as the dual of the simple polytope $\Phi_t$. One expects that
this simplicial complex is flag, and moreover that its Gamma-triangle
with respect to the distinguished facet, as defined in
\cite{chap_gamma}, is nonnegative. This holds for all arbors of size at
most $13$.

\smallskip

Even weaker is the following conjecture, keeping only minimal
information.

\begin{conjecture}\label{very_weak}
  For every arbor $t$, there exists a simple polytope $\Phi_t$ such
  that the $h$-vector of $t$ is equal to the $h$-vector of $\Phi_t$.
\end{conjecture}


\subsection{Further remarks}

The posets that we attach to arbors of type $\TA$ are special cases of
the \textit{avalanche posets} introduced by Combe and Giraudo in
\cite{combe_giraudo2, combe_giraudo1}. Despite their very simple
nature and the fact that their cardinality is a Catalan number, they
do not seem to have been studied much.

The polytopes that we attach to arbors of type $\TA$ have Catalan-many
lattice points. They in fact coincide with a special case of the
\textit{Pitman-Stanley polytopes} introduced in \cite{pitman_stanley}, when all
parameters in the definition are set to $1$.

Our notion of pairs of posets being transmuted partners seems to have
some relationship, still to be clarified, with the notion of
\textit{Whitney duality} introduced in \cite{gonzalez_hallam}. Whitney
duality relates pairs of posets that exchange their Whitney
polynomials of the first and second kind. Some examples involve
noncrossing partition lattices. One can in particular note the
intriguing fact that the subposet of the poset
$\operatorname{NCDyck}_4$ of \cite[Fig. 9]{gonzalez_hallam} defined by
restriction to indices in the order $1,2,3,4$ is isomorphic to the
poset $P_t$ for the arbor of type $\TA$ on $3$ elements.

Note that our notion of \textit{arbor} has been considered in the
literature under various names, including ``multilabelled trees'' and ``rooted trees
of non-empty sets'' \cite{murua, berland}.

In our examples of \textit{transmuted partners}, we observe more
properties, for example the leading coefficients of the $q$-Zeta
polynomials~\cite{qZeta} are reciprocal of each other as polynomials in $q$.

\smallskip

Acknowledgments: Thanks to Pierre Baumann, Alin Bostan, Marc
Mezzarobba, Philippe Nadeau and Vincent Pilaud for illuminating
discussions and constructive suggestions. Thanks to the referee for a
careful report and very pertinent information.

\section{Notations and basic properties}\label{sec:notation}

Let us start by describing our main combinatorial objects, using the
language of species. For an introduction to the theory of species, see \cite{BLL}.

Let $S$ be the species of sets and $S_{\geq 1}$ be the species of
non-empty sets. Let us consider the species $A$ defined by the
implicit equation
\begin{equation*}
  A = S_{\geq 1} \times S(A).
\end{equation*}
In words, this means that the $A$-structures on a finite set $I$ are
rooted trees, whose vertices are labeled by pairwise disjoint
non-empty subsets of $I$, and the full set of labels is $I$.

Such a rooted tree will be called an \textit{arbor} on the set $I$. For example, here is an arbor on the set $\{a,b,c,d,e,f\}$:
\begin{center}
\begin{tikzpicture}
[grow=down]
\tikzstyle{every node}=[draw,shape=circle]

\node [fill=racines,thick] {$a,f$}
  child {node {$b,e$}}
  child {node {$d$}}
  child {node {$c$}}
;
\end{tikzpicture}.
\end{center}
This example will be used as a running example.

Arbors will either be depicted with their root vertex on the top and
growing down as above, or with their root vertex on the left and
growing right. The cardinality of a vertex $v$ will be denoted
$|v|$. If $|v| \geq 2$, then the vertex $v$ is \textit{multiple}. The
root vertex of an arbor $t$ will be denoted $\rt_t$.

For a vertex $v$ of an arbor $t$, any vertex $w$ such that the unique
path in $t$ from $w$ to the root vertex $\rt_t$ passes through $v$ is
called a \textit{descendant} of $v$. This condition is denoted by
$v \preceq w$. The set of all descendants of $v$ form the sub-tree
rooted at $v$.

To every vertex $v$ of an arbor, one associates the union of all
labels that appear in some descendant of $v$. This set is denoted
$\down(v)$, standing for ``down''. In the running example, the sets $\down(v)$
are $\{a,b,c,d,e,f\}$, $\{b,e\}$, $\{d\}$ and $\{c\}$.

When $|I| = n$, the arbor $t$ has \textit{size} $n$.  


\smallskip

To every arbor $t$ on a set $I$, we will associate both a partial
order and a polytope, closely related to each other. Let us start with
the polytope.

The ambient space is the vector space over $\RR$ with coordinates
${(x_i)}_{i\in I}$. The polytope $Q_t$ is defined by the inequalities
$0 \leq x_i$ for every $i \in I$ and 
\begin{equation}
  \label{def_inegalite}
  \sum_{i \in \down(v)} x_i \leq |\down(v)|
\end{equation}
for every vertex $v$. In particular, the inequality attached to the
root vertex $\rt_t$ says that the sum of all coordinates is less than
or equal to $|I|$, so that this indeed defines a bounded polyhedron.

In the running example, besides all coordinates
being nonnegative, the defining inequalities are:
\begin{equation*}
  x_c \leq 1,\quad x_d \leq 1,\quad x_b+x_e \leq 2,\quad x_a+x_b+x_c+x_d+x_e+x_f \leq 6.
\end{equation*}

The following property is clear from the definition.
\begin{lemma}
  Points with all coordinates in the interval $[0,1]$ are contained in $Q_t$. In
  other words, the polytope $Q_t$ contains the standard hypercube.
\end{lemma}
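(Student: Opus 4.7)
The plan is to verify each defining inequality of $Q_t$ directly on any point $x = (x_i)_{i \in I}$ with $x_i \in [0,1]$ for all $i$. There are two families of inequalities to check: the nonnegativity constraints $0 \le x_i$ and, for each vertex $v$ of $t$, the constraint $\sum_{i \in \down(v)} x_i \le |\down(v)|$.

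The first family is immediate from the assumption $x_i \in [0,1]$, which gives in particular $x_i \ge 0$. For the second family, I would fix a vertex $v$ and bound the sum termwise: since each $x_i \le 1$, we have
\begin{equation*}
\sum_{i \in \down(v)} x_i \;\le\; \sum_{i \in \down(v)} 1 \;=\; |\down(v)|,
\end{equation*}
which is exactly the inequality~\eqref{def_inegalite} attached to $v$. Since this holds for every vertex $v$, the point $x$ lies in $Q_t$.

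There is no real obstacle here; the statement is essentially a restatement of the form of the defining inequalities, each of which has the shape ``sum over a subset is bounded by the cardinality of that subset''. The whole argument is a one-line termwise comparison, and the lemma is only used to record this observation for later use.
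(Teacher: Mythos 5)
Your proof is correct and is exactly the termwise argument that the paper has in mind when it states the lemma is ``clear from the definition'' (the paper gives no explicit proof). Nothing further is needed.
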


The definition of polytopes $Q_t$ by inequalities implies clearly that
they are anti-blocking polytopes, in the sense of Fulkerson~\cite{fulkerson}, see also~\cite[\S 9.3]{schrijver}.

\smallskip
We will often use the height function $\haut$ which is the sum of
coordinates. Note that the height function is bounded above by $|I|$,
which is attained at the point $(1,1,\ldots,1)$.

Let now $t$ be an arbor on a set of cardinality $n \geq 1$. Let
$(t_1,\ldots,\tw)$ be the sub-trees of the root vertex of $t$. Let $r$
be the size of the root vertex of $t$. Then the projection $\pi$ along
the coordinates in the root vertex to the other coordinates defines a
map from $Q_t$ to the product $\prod_k Q_{t_k}$. This projection $\pi$
is surjective as the fiber over any point of total height $h$ in
$\prod_k Q_{t_k}$ is the intersection of $\RR_{\geq 0}^r$ with the
half-space $\sum_{i \in \rt_t} x_i \leq n - h$. Note that the total
height on $\prod_k Q_{t_k}$ is less than $n - r$. In particular, the
fibers are simplices of strictly positive dimension.

\begin{lemma}
  The polytope $Q_t$ is a simple lattice polytope. 
\end{lemma}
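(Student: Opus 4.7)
The plan is to induct on the size $n = |I|$ of the arbor, using the projection $\pi \colon Q_t \to \prod_k Q_{t_k}$ introduced just above. The base case is when $t$ has a single vertex: then $Q_t$ is the dilated simplex $\{x \in \RR^n_{\geq 0} : \sum_i x_i \leq n\}$, whose $n+1$ vertices $0$ and $n e_i$ are lattice points, each lying on exactly $n$ of the $n+1$ facets, so $Q_t$ is a simple lattice polytope.

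For the inductive step, assume each $Q_{t_k}$ is a simple lattice polytope of dimension $|I_k|$, so that $\prod_k Q_{t_k}$ is a simple lattice polytope of dimension $n-r$. The facets of this product correspond exactly to the defining inequalities of $Q_t$ other than the $r$ root-floor inequalities $x_i \geq 0$ for $i \in \rt_t$ and the root-vertex inequality $\sum_{i \in I} x_i \leq n$; in particular every non-root inequality involves only non-root coordinates. From the discussion preceding the statement, every fiber of $\pi$ is the full-dimensional simplex $\{z \in \RR^r_{\geq 0} : \sum_{i \in \rt_t} z_i \leq n - \haut(y)\}$, whose vertices are $0$ and the lattice points $(n-\haut(y))\, e_{i_0}$ for $i_0 \in \rt_t$. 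I would then claim that the vertices of $Q_t$ are precisely the pairs $(y,z)$ with $y$ a vertex of $\prod_k Q_{t_k}$ and $z$ a vertex of the corresponding fiber. Since $\haut(y) \in \ZZ$ at such $y$, every vertex is then a lattice point.

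To verify simplicity at a vertex $v = (y,z)$, I count tight facet inequalities: exactly $n-r$ come from the base vertex $y$, and either the $r$ floor equalities $x_i = 0$ for $i \in \rt_t$ (if $z = 0$) or the $r-1$ floor equalities $x_i = 0$ for $i \in \rt_t \setminus \{i_0\}$ together with $\sum_{i \in I} x_i = n$ (if $z = (n-\haut(y))\,e_{i_0}$), giving $n$ tight facets in total. Linear independence is then immediate from a block decomposition: the base forms live entirely in the non-root coordinates, the floor forms pick out the remaining root coordinates, and in the second case the root-sum form has coefficient $1$ on $x_{i_0}$, which is untouched by all other tight forms.

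The one step that deserves care, and that I expect to be the main obstacle, is ruling out any other vertices: I must show that a vertex $v$ of $Q_t$ must project to a vertex $y$ of $\prod_k Q_{t_k}$ and that, once this is known, $z$ must be a vertex of the fiber simplex. Both follow from the fact, observed before the statement, that all fibers of $\pi$ are full-dimensional simplices of constant dimension $r \geq 1$: given any candidate vertex $v$ with $\pi(v)$ not extremal, perturbing $\pi(v)$ along an edge of $\prod_k Q_{t_k}$ and adjusting $z$ continuously inside the (linearly varying) fiber produces a line segment in $Q_t$ through $v$, contradicting extremality; a similar internal perturbation inside the fiber forces $z$ itself to be a vertex of the simplex.
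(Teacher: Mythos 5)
Your proof is correct and follows essentially the same route as the paper: induction on the size via the projection $\pi$, identification of the vertices of $Q_t$ as pairs (vertex of the base product $\prod_k Q_{t_k}$, vertex of the fiber simplex), which gives integrality since the fiber vertices are $0$ or $(n-\haut(y))e_{i_0}$ with $\haut(y)$ integral, and a count of tight constraints for simplicity. The only step where the paper is more explicit than you is the final perturbation: to turn a segment through $\pi(v)$ in the base into an actual segment through $v$ in $Q_t$ one must lift the endpoints \emph{affinely} — when the root-sum constraint is tight, transfer the height change onto a strictly positive root coordinate, which exists because the fiber has positive size $n-\haut(y)\geq r\geq 1$ — which is exactly the paper's explicit midpoint construction $z_\pm=(\bar z_\pm,\,n-h_\pm,0,\dots,0)$.
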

\begin{proof}
  This is true for arbors of size $1$, where $Q_t$ is the segment $[0,1]$.

  Assume that this is known for all arbors with size strictly less
  than $n$. Let $\pi(Q_t)$ denote $\prod_k Q_{t_k}$.

  Let $z$ be a vertex of $Q_t$. The point $z$ must be an extremal
  point in the fiber $\pi^{-1}(\pi(z))$. So its coordinates with
  indices in the root vertex $\rt_t$ are either all $0$ or exactly one
  of them is not $0$ and equal to $n-h$, where $h$ is the
  height of $\pi(z)$. If these coordinates are all $0$, then $\pi(z)$
  must be a vertex of $\prod_k Q_{t_k}$. In this case, we obtain a
  simple vertex with integer coordinates by induction hypothesis.

  In the case of exactly one non-zero coordinate, write
  $z = (\pi(z),n-h,0,\ldots,0)$ in the coordinate system where indices
  in the root come last. Let us show that the point $\pi(z)$ must
  still be a vertex of $\pi(Q_t)$. Assume by contradiction that
  $\pi(z)$ is not extremal. Then there exist distinct $\bar{z}_1$ and
  $\bar{z}_2$ in $\pi(Q_t)$ whose middle is $\pi(z)$. Let $h_1$, $h_2$
  be the heights of $\bar{z}_1$ and $\bar{z}_2$. Then the height $h$
  of $\pi(z)$ is $(h_1+h_2)/2$. Let $z_1=(\bar{z}_1,n-h_1,0,\ldots,0)$
  and $z_2=(\bar{z}_2,n-h_2,0,\ldots,0)$ in the same coordinate
  system. Then both $z_1$ and $z_2$ are in $Q_t$, distinct, and their
  middle is $z$. Therefore $z$ is not extremal, which is absurd. Hence
  $\pi(z)$ is a vertex of $\pi(Q_t)$ and it has integer coordinates
  and integer height $h$ by induction. Therefore $z$ also has integer
  coordinates.

  It follows that there are $r+1$ vertices of $Q_t$ in the fiber over
  each vertex of $\pi(Q_t)$. They all have integer coordinates. From
  the description above, one can also deduce that they are simple.
\end{proof}

One can extract from the proof above a recursive description of the
vertices of $Q_t$ themselves. Let us call \textit{defect} of a point
$z$ in $Q_t$ at a vertex $v$ the difference
\begin{equation}
  \delta_v(z) = |\down(v)| - \sum_{i \in \down(v)} x_i(z) \geq 0.
\end{equation}

\begin{lemma}\label{Qt_vertices}
  For each vertex $v$ of the arbor $t$ and for every vertex $z$ of the
  polytope $Q_t$,
  \begin{itemize}
    \item either all coordinates $x_i(z)$ with $i \in v$ vanish,
    \item or exactly one coordinate $x_i(z)$ with $i \in v$ does not vanish.
  \end{itemize}
  At every vertex $v$ where all coordinates vanish, the defect
  $\delta_v(z)$ is at least $|v|$. 
  At every vertex $v$ where one coordinate is non-zero, the defect
  $\delta_v(z)$ must vanish. 
\end{lemma}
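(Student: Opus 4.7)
The plan is to induct on the size $n$ of the arbor $t$, following the same structural decomposition used in the proof of the preceding lemma. The base case $n=1$ is immediate: $Q_t = [0,1]$ has vertices $0$ and $1$, and the two clauses are readily verified at the unique vertex of $t$, with defects $1 = |v|$ and $0$ respectively.

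For the inductive step, I would fix a vertex $z$ of $Q_t$ and treat the root vertex $\rt_t$ separately from the vertices lying in the subtrees $t_1, \ldots, \tw$. The proof of the preceding lemma already supplies the key dichotomy at the root: either all coordinates of $z$ indexed by $\rt_t$ vanish, or exactly one of them equals $n - h$, where $h = \haut(\pi(z))$; in both subcases $\pi(z)$ is a vertex of $\prod_k Q_{t_k}$. This immediately yields the first clause at $v = \rt_t$. For the defect statement at the root, in the first subcase I would combine $\delta_{\rt_t}(z) = n - h$ with the bound $h \leq n - r$, where $r = |\rt_t|$, obtained by summing the evident height bound $\haut \leq |\down(\rt_{t_k})|$ over each factor $Q_{t_k}$, to conclude $\delta_{\rt_t}(z) \geq r = |\rt_t|$. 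In the second subcase a direct computation gives $\delta_{\rt_t}(z) = n - \bigl((n-h) + h\bigr) = 0$.

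For any non-root vertex $v$ of $t$, $v$ belongs to exactly one subtree $t_k$, and the coordinates of $z$ indexed by $\down(v)$ coincide with the corresponding coordinates of the component $\pi(z)_k \in Q_{t_k}$, so $\delta_v(z) = \delta_v(\pi(z)_k)$. Since $\pi(z)_k$ is itself a vertex of $Q_{t_k}$ and $t_k$ has size strictly less than $n$, the inductive hypothesis applied to $t_k$ at $v$ yields both the dichotomy and the matching defect statement.

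I do not anticipate any serious obstacle: the lemma is essentially a more explicit reading of the recursive description of the vertices that was already extracted in the proof of the preceding lemma, and the only extra quantitative input is the elementary additive height bound $\haut(\pi(z)) \leq n - r$ on $\prod_k Q_{t_k}$, needed to certify the defect inequality at the root in the first subcase.
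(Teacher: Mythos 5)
Your proof is correct and follows exactly the route the paper intends: the paper gives no separate argument for this lemma, merely saying it is ``extracted'' from the proof that $Q_t$ is a simple lattice polytope, and your induction is precisely that extraction, using the root dichotomy (all root coordinates zero, or exactly one equal to $n-h$ with $\pi(z)$ a vertex of $\prod_k Q_{t_k}$ in both cases) and passing to the subtrees via the components $\pi(z)_k$. The added quantitative step, $\delta_{\rt_t}(z)=n-h\geq r$ from the height bound $h\leq n-r$ on $\prod_k Q_{t_k}$, and $\delta_{\rt_t}(z)=0$ in the one-nonzero-coordinate case, is exactly the routine verification the paper leaves implicit.
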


This implies that, assuming all coordinates of a vertex $z$ of $Q_t$ for descendant
vertices of a vertex $v$ of $t$ are fixed, the unique possible
non-zero value in the vertex $v$ is therefore determined by the
zero-defect condition, which means that the defining inequality
\eqref{def_inegalite} associated with $v$ must be an equality.

\begin{lemma}
  The polytope $Q_t$ has no interior lattice point.
\end{lemma}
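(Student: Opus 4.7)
The plan is extremely short: the inequalities defining $Q_t$ are tight enough that integrality forces the root inequality to be saturated. I would argue by contradiction as follows.

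Suppose $z$ is a lattice point in the interior of $Q_t$. Being in the interior means every defining inequality is strict, so in particular $x_i(z) > 0$ for each $i \in I$. Since $z$ has integer coordinates, this upgrades to $x_i(z) \geq 1$ for every $i \in I$. Summing over all $i \in I$ gives
\begin{equation*}
  \sum_{i \in I} x_i(z) \geq |I|.
\end{equation*}
On the other hand, the defining inequality \eqref{def_inegalite} associated with the root vertex $\rt_t$ reads $\sum_{i \in \down(\rt_t)} x_i(z) \leq |\down(\rt_t)| = |I|$, and strictness of this inequality (because $z$ is interior) gives $\sum_{i \in I} x_i(z) < |I|$. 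This contradicts the previous display, so no such $z$ exists.

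The only step worth double-checking is that strict inequality at the root genuinely belongs to the list of conditions cutting out the interior, i.e.\ that the root inequality is one of the facet-defining inequalities of $Q_t$ rather than a redundant one. This is easy to see: the point $(1,1,\ldots,1)$ lies on this hyperplane and saturates no other inequality apart from those at descendant vertices with $\down(v) = \{i\}$ for single-element vertices on a saturated path, but the root inequality is certainly active since the height $|I|$ is attained. Even if redundancy were an issue, the argument still goes through using the (topological) interior of $Q_t$: any interior lattice point has $x_i \geq 1$ everywhere, so the total height is $\geq |I|$, while the total height on $Q_t$ is bounded above by $|I|$ with equality only at the boundary point $(1,\ldots,1)$. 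So there is really no obstacle here.
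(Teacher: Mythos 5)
Your argument is correct and is essentially the paper's own proof: integrality plus strict positivity of all coordinates forces the height to be at least $|I|$, while interiority makes the root inequality strict, so the height is strictly below $|I|$, a contradiction. (One small inaccuracy in your closing aside: the maximum height $|I|$ need not be attained only at $(1,\ldots,1)$ --- for a single-vertex arbor the whole facet $\sum_i x_i = |I|$ attains it --- but all you need is that such points lie on the boundary, which is true, so the argument is unaffected.)
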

\begin{proof}
  Every interior lattice point must have all coordinates $x_i >
  0$, but the sum of coordinates must be an integer strictly less than
  $|I|$.
\end{proof}

\smallskip

Let us now define the poset $P_t$. The elements of $P_t$ are the
lattice points of $Q_t$. The partial order is defined by comparison of
coordinates, namely $x \leq y$ if and only if $x_i \leq y_i$ for all
$i \in I$. The poset $P_t$ has a unique minimum $0$ and is graded by
the height function $\haut$.

From the definition of $P_t$, one deduces the following property.
\begin{lemma}
  For every element $a$ in $P_t$, the interval $[0,a]$ is a product of
  total orders whose lengths are the non-zero coordinates of $a$ plus one.
\end{lemma}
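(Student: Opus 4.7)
The plan is to exhibit an explicit poset isomorphism between the interval $[0,a]$ in $P_t$ and the componentwise-ordered product $\prod_{i \in I} \{0, 1, \ldots, a_i\}$ of chains, where the $i$-th factor has cardinality $a_i + 1$. Factors with $a_i = 0$ are singletons and collapse, leaving a nontrivial product of total orders indexed by the non-zero coordinates of $a$, matching the statement.

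The only nontrivial point is to verify that every integer tuple $x$ with $0 \leq x_i \leq a_i$ componentwise actually belongs to $P_t$, i.e., lies in $Q_t$. This follows immediately from the monotonicity of the defining inequalities \eqref{def_inegalite}: for every vertex $v$ of the arbor $t$,
$$\sum_{i \in \down(v)} x_i \;\leq\; \sum_{i \in \down(v)} a_i \;\leq\; |\down(v)|,$$
where the last inequality uses that $a$ itself lies in $Q_t$. Combined with the trivial bounds $x_i \geq 0$, this gives $x \in Q_t$, and since $x$ is an integer tuple, $x \in P_t$. Conversely, every element of $[0,a]$ in $P_t$ tautologically satisfies $0 \leq x_i \leq a_i$ by the very definition of the componentwise order.

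This yields a bijection between $[0,a]$ and the set of integer points in the coordinate rectangle $\prod_i [0,a_i]$, and since the order on both sides is componentwise comparison on integer tuples, the bijection is a poset isomorphism. There is no real obstacle: the statement amounts to the observation that the inequalities cutting out $Q_t$ are non-decreasing in each coordinate, so the principal order ideal generated by any $a \in P_t$ inside $\ZZ^I$ is unconstrained by the polytope and is therefore a product of chains.
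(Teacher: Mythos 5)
Your argument is correct and is exactly the (implicit) reasoning behind the paper's one-line justification: the defining inequalities \eqref{def_inegalite} are coordinatewise monotone, so the lattice points below $a$ are precisely the integer tuples in $\prod_i\{0,\dots,a_i\}$, and the componentwise order makes this a product of chains. The paper leaves this as immediate from the definition, so your write-up simply spells out the same observation.
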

This property implies that one can consider the Hasse diagram as the
skeleton of a cubical complex, by glueing together the cubical
complexes for products of total orders.

In the running example, the polytope $Q_t$ is a $6$-dimensional simple
lattice polytope with $f$-vector $(36, 108, 141, 102, 43, 10, 1)$. The
poset $P_t$ has $330$ elements, which are the lattice points of
$Q_t$. The cubical complex of $P_t$ has $f$-vector
$(330, 990, 1152, 654, 186, 24, 1)$. A simpler example is illustrated in \Cref{fig:1}.


\begin{figure}[ht]
\begin{center}
\includegraphics[height=1cm]{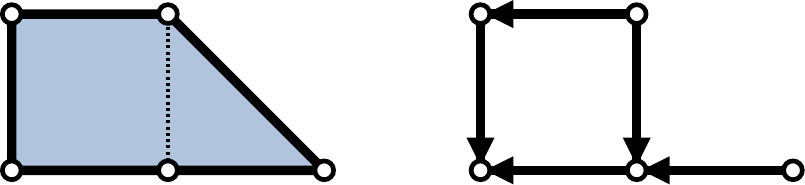}
\end{center}
  \caption{\label{fig:1}Polytope $Q_t$ and poset $P_t$ for the arbor $\colorbox{racines}{\{$x$\}} - \{y\}$.}
  
\end{figure}


\smallskip

Note that both the polytope $Q_t$ and the poset $P_t$ are species, in
the sense that every bijection $I \simeq J$ that induces a relabeling
of $t$ also induces an isomorphism of polytopes and posets. We will
from now on work with isomorphism classes of arbors.

\subsection{Minkowski decomposition}\label{subsec:minkowski}

Let $t$ be an arbor on the set $I$. Let us show that the polytope $Q_t$ admits a nice
decomposition as a Minkowski sum of simplices.

Let us first describe the Minkowski summands. There will be a summand
$U_v$ associated with every vertex $v$ of $t$.

For every vertex $v$, define a subset $\up(v)$ of $I$, standing for
``up'', as the union of the sets attached to all vertices of $t$ in
the unique path in $t$ from the root $\rt_t$ of $t$ to the vertex
$v$. The polytope $U_v$ is the convex hull inside $\RR^{\up(v)}$ of
the origin and all $|v|e_i$ where $e_i$ are the basis vectors of
$\RR^{\up(v)}$. Note that it is a Minkowski multiple of the basic
simplex whose vertices are the origin and the $e_i$.

In the running example, the subsets $\up(v)$ are
$\{a,f\},\{a,f,b,e\},\{a,f,d\}$ and $\{a,f,c\}$.

Let
\begin{equation}
  \tM_t = \sum_{v} U_v
\end{equation}
be the Minkowski sum of these simplices. We will show that
$\tM_t = Q_t$ by inclusion in both directions.

\smallskip
\begin{lemma}
  Every point in $\tM_t$ satisfies all the
  defining inequalities of $Q_t$.
\end{lemma}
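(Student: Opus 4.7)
The plan is to parametrize a generic point of $\tM_t$, write each coordinate explicitly, and then verify both families of defining inequalities of $Q_t$ by a direct bound.

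First, positivity of coordinates is immediate: each summand $U_v$ lies in $\RR_{\geq 0}^{\up(v)} \subset \RR_{\geq 0}^I$, so any Minkowski sum of points from the $U_v$'s has nonnegative coordinates.

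For the remaining inequalities, write a point $x \in \tM_t$ as $x = \sum_{w} y_w$ with $y_w \in U_w$. Since $U_w$ is the convex hull of $0$ and the vectors $|w|\, e_i$ for $i \in \up(w)$, there exist coefficients $\lambda_{w,i} \geq 0$ indexed by $i \in \up(w)$ with $\sum_{i \in \up(w)} \lambda_{w,i} \leq 1$ such that $y_w = \sum_{i \in \up(w)} \lambda_{w,i}\, |w|\, e_i$. The key combinatorial observation, which I would state as a short remark beforehand, is that because the labels on the vertices of $t$ are pairwise disjoint, every $i \in I$ lies in a unique vertex $u_i$ of $t$, and the condition $i \in \up(w)$ is equivalent to $u_i \preceq w$. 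Therefore, the $i$-th coordinate of $x$ is
\begin{equation*}
  x_i \;=\; \sum_{w \,:\, u_i \preceq w} \lambda_{w,i}\, |w|.
\end{equation*}

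Now fix a vertex $v$ of $t$. Since $i \in \down(v)$ is equivalent to $v \preceq u_i$, summing over $i \in \down(v)$ and exchanging the order of summation gives
\begin{equation*}
  \sum_{i \in \down(v)} x_i \;=\; \sum_{w \succeq v} |w| \sum_{i \in \up(w) \cap \down(v)} \lambda_{w,i} \;\leq\; \sum_{w \succeq v} |w|,
\end{equation*}
using that the inner sum is bounded by $\sum_{i \in \up(w)} \lambda_{w,i} \leq 1$. Finally, because the vertices of $t$ have pairwise disjoint label sets and $\down(v)$ is the disjoint union of the vertices $w \succeq v$, one has $\sum_{w \succeq v} |w| = |\down(v)|$, which concludes the proof of the inequality \eqref{def_inegalite} associated with $v$.

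There is no real obstacle here; the only mild subtlety is keeping straight the two order relations used in the indexing (a label $i$ lies in $\up(w)$ when its host vertex $u_i$ is an \emph{ancestor} of $w$, while $i \in \down(v)$ when $u_i$ is a \emph{descendant} of $v$), and noticing that the composite condition $v \preceq u_i \preceq w$ naturally forces $v \preceq w$, which is precisely what makes the outer sum run over descendants of $v$ and produce $|\down(v)|$ in the end.
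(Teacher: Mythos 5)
Your proof is correct and follows essentially the same route as the paper's: positivity is immediate from the summands, and for the inequality at a vertex $v$ one restricts the contribution to Minkowski summands $U_w$ with $w$ a descendant of $v$ (since $\up(w)\cap\down(v)=\emptyset$ otherwise), bounds each such contribution by $|w|$, and sums to $|\down(v)|$. Your explicit convex-coefficient parametrization of the points of $U_w$ is just a more detailed way of invoking the inequality $\sum_{i\in\up(w)} x_i(e_w)\leq |w|$ that the paper uses directly.
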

\begin{proof}
  This is clear for the inequalities $x_i \geq 0$ for $i \in I$,
  because all Minkowski summands $U_v$ have nonnegative coordinates. So let
  us consider the inequality attached to a vertex $v$ of $t$. Fix an
  element $e$ in the Minkowski sum $\tM_t$, written as
  \begin{equation*}
    e = \sum_{w} e_w,
  \end{equation*}
  where $e_w \in U_w$. The inequality to check is
  \begin{equation*}
    \sum_{i \in \down(v)} x_i(e) \leq |\down(v)|.
  \end{equation*}
  So we need to bound the sum
  \begin{equation*}
    \sum_{i \in \down(v)} \sum_w x_i(e_w).
  \end{equation*}
  When $w$ is not a descendant of $v$, the Minkowski summand $U_w$
  involves only coordinates in $\up(w)$ which is disjoint from
  $\down(v)$. Therefore the sum above can be restricted to $w$ in the set of descendants of $v$:
  \begin{equation*}
    \sum_{v \preceq w} \sum_{i \in \down(v) \cap \up(w)} x_i(e_w).
  \end{equation*}
  By definition, the summand $U_w$ is a dilated basic simplex, hence
  \begin{equation*}
    \sum_{i \in \up(w)} x_i(e_w) \leq |w|.
  \end{equation*}
  Therefore our full sum is bounded above by
  \begin{equation*}
    \sum_{v \preceq w} |w| = |\down(v)|.
  \end{equation*}
\end{proof}

\begin{lemma}
  Let $t$ be an arbor on the set $I$. Every vertex of the polytope $Q_t$ belongs to $\tM_t$.
\end{lemma}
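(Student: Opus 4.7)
The plan is to construct, for each vertex $z$ of $Q_t$, an explicit Minkowski decomposition $z = \sum_v e_v$ with $e_v \in U_v$, where each $e_v$ is either the origin or one of the extreme points $|v|e_j$ of $U_v$. By \Cref{Qt_vertices}, there is a set $A \subseteq V(t)$ of ``active'' vertices together with a choice $i(u) \in u$ for each $u \in A$, such that the nonzero coordinates of $z$ are exactly those at positions $i(u)$, $u \in A$. The task is to determine the values $x_{i(u)}(z)$ and then to route them through the summands $U_v$.

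First I would compute $x_{i(u)}(z)$ using the defect conditions. At each active $u$, \Cref{Qt_vertices} gives $\delta_u(z) = 0$, i.e.\ $\sum_{j \in \down(u)} x_j(z) = |\down(u)|$. Since the only nonzero coordinates in $\down(u)$ come from $u$ itself and its active descendants, this rewrites as
\[
  x_{i(u)}(z) \;+\; \sum_{u'} x_{i(u')}(z) \;=\; |\down(u)|,
\]
where $u'$ runs over active proper descendants of $u$. Splitting the sum according to the maximal active descendants strictly below $u$ and telescoping, I obtain $x_{i(u)}(z) = |\down^*(u)|$, where $\down^*(u)$ denotes the union of the labels on those vertices $w$ with $u \preceq w$ such that no active vertex lies strictly between $u$ and $w$.

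Next I would build the decomposition. For each vertex $v$ of $t$, let $u^*(v)$ denote the deepest active ancestor of $v$ along the path from the root to $v$, if one exists. Set $e_v := |v| \, e_{i(u^*(v))}$ when $u^*(v)$ exists, and $e_v := 0$ otherwise. Since $i(u^*(v)) \in u^*(v) \subseteq \up(v)$, each $e_v$ is a vertex of $U_v$. For a coordinate of the form $i(u)$ with $u \in A$, the contribution of $\sum_v e_v$ equals $\sum_{v\,:\,u^*(v)=u} |v|$; but the set $\{v : u^*(v)=u\}$ is exactly the collection of vertices whose labels make up $\down^*(u)$, so this sum equals $|\down^*(u)| = x_{i(u)}(z)$. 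Coordinates outside the image of $i$ receive no contribution from any $e_v$, matching the fact that $x_i(z) = 0$ there. This shows $z \in \tM_t$, and since $Q_t$ is the convex hull of its vertices, we get $Q_t \subseteq \tM_t$.

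The main obstacle is the bookkeeping around the notion of ``deepest active ancestor'': one must verify that the sets $\{v : u^*(v) = u\}$ for $u \in A$, together with the set of vertices having no active ancestor, partition $V(t)$, and that the telescoping identity propagates correctly through the recursive structure of the active subtree. Both points are straightforward by induction on the depth of $u$ within the active subtree, so no deep difficulty is expected beyond careful notation.
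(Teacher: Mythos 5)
Your approach is genuinely different from the paper's. The paper argues by induction on the size of the arbor: it lifts, through the projection $\pi$, a Minkowski decomposition of $\pi(z)$ obtained from the subtrees, and treats the root coordinates by following the chain of subtree roots with nonzero defect. You instead give a direct, global construction: using \Cref{Qt_vertices} you determine all nonzero coordinates of the vertex $z$ from the zero-defect equations and then route each summand $U_v$ towards the nearest active vertex weakly above $v$. This is essentially the paper's recursion unrolled in one step, and the routing idea is sound.

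However, the intermediate formula you state is wrong as written. If $\down^*(u)$ is the union of labels of the vertices $w\succeq u$ such that no active vertex lies strictly between $u$ and $w$, then the identity $x_{i(u)}(z)=|\down^*(u)|$ fails: take the linear arbor with two singleton vertices, root $a$ and child $b$, so that $Q_t$ is given by $x_a,x_b\ge 0$, $x_b\le 1$, $x_a+x_b\le 2$; at the vertex $z=(1,1)$ both arbor vertices are active, your $\down^*(a)$ has cardinality $2$, but $x_a(z)=1$. The defect of your definition is that it keeps the maximal active proper descendants $u_j$ of $u$ inside $\down^*(u)$. Subtracting the zero-defect identities $\sum_{\text{active }u'\succeq u_j} x_{i(u')}=|\down(u_j)|$ at each $u_j$ from the one at $u$ shows that the correct value is $x_{i(u)}(z)=\sum_w |w|$, where $w$ runs over $u$ together with the vertices below $u$ reachable without meeting another active vertex, so active $w\neq u$ must be excluded from $\down^*(u)$. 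With this correction, and reading ``deepest active ancestor'' weakly so that an active vertex is routed to itself, the fibers $\{v:\ u^*(v)=u\}$ coincide with these sets, the coordinates match, and your construction does exhibit $z$ as a point of $\tM_t$. As stated, though, the mismatch also invalidates your claim that $\{v:\ u^*(v)=u\}$ is the vertex set underlying $\down^*(u)$, so the argument needs this repair before it is complete.
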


The proof is done by induction on the size of the arbor. This clearly
holds for arbors with just one vertex. The proof will use the
recursive description of vertices given in \Cref{Qt_vertices} and the defects.

\begin{proof}
  Let $(t_1,\ldots,\tw)$ be the sub-trees of the root vertex $\rt_t$
  of $t$. Let $z$ be a vertex of $Q_t$.  By
  induction on the size of $t$, the statement holds for all sub-trees
  $t_k$. As the image of $\pi$ is the direct product of the polytopes
  $Q_{t_k}$, one can take the direct product of the Minkowski
  decompositions of $Q_{t_k}$. Therefore the image $\pi(z)$ can be
  written as a sum
  \begin{equation}
    \label{pi_sum}
    \pi(z) = \sum_{w \not= \rt_t} e_w,
  \end{equation}
  where $e_w$ is in the Minkowski summand $U_w$ for some vertex $w$ in some $t_k$.

  If all coordinates $x_i(z)$ for $i \in \rt_t$ vanish, then to each $e_w$
  in the sum \eqref{pi_sum} one can associate a point $f_w$ with the
  same coordinates plus additional zero coordinates for all vertices
  $i \in \rt_t$. This defines a point $f_w$ in the summand $U_w$ of
  $\tM_t$, as the sum of coordinates in $\up(w)$ is unchanged. The sum
  $\sum_{w \not= \rt_t} f_w$ is therefore in $\tM_t$ and coincides with
  $z$ by construction. 

  \smallskip

  Assume now that exactly one coordinate $x_i(z)$ with $i \in \rt_t$
  is non-zero. Because the defect $\delta_{\rt_t}(z)$ vanishes, one
  gets
  \begin{equation*}
    x_i(z) + \sum_{j \not \in \rt_t} x_j(z) = n = |\rt_t| + \sum_k \sum_{w \in t_k} |w|.
  \end{equation*}
  It follows that
  \begin{equation}
    \label{initiale}
    x_i(z) - |\rt_t| = \sum_k \delta_k(z),
  \end{equation}
  where $\delta_k(z)$ is the defect of $z$ with respect to the root vertex of $t_k$.




  Consider the set $S$ of vertices that can be reached from the root
  $\rt_t$ by repeatedly moving to the root $w$ of a sub-tree with
  non-zero defect $\delta_w(z)$. Let $w$ be a vertex in $S$. Then from
  the element $e_w$ in the summand $U_w$ of some $\tM_{t_k}$, define
  $f_w$ by adding a new coordinate $x_i$ with value $|w|$ plus other
  zero coordinates for other vertices $j \in \rt_t$.

  Let us check that $f_w$ is an element in the summand $U_w$ of
  $\tM_t$. Certainly all coordinates of $f_w$ are nonnegative. Because
  the defect $\delta_v(z)$ is not zero for every vertex
  $v \not = \rt_t$ on the path between $\rt_t$ and $w$, all
  coordinates $x_j(z)$ with $j$ in $\up(w) \setminus \rt_t$ are
  zero. The sum of all coordinates of $f_w$ is therefore $|w|$. It
  follows that $f_w$ is in the summand $U_w$ of $\tM_t$.

  Let us consider $z' = \sum_{w \not= \rt_t} f_w$. Then for all
  $j \not \in r$, the coordinates $x_j(z)$ and $x_j(z')$ coincide by
  construction. For $j\in \rt_t$ not equal to $i$, the coordinates
  $x_j(z)$ and $x_j(z')$ both vanish. Only the coordinate $x_i$ may
  differ between $z$ and $z'$. By construction $x_i(z')$ is the sum of
  $|w|$ over all vertices $w$ in the reachable set $S$.

  Let us prove that this sum over $S$ is equal to $x_i(z) - |\rt_t|$. Let
  us start with \eqref{initiale} or more precisely its restriction to
  sub-trees $t_k$ whose root has non-zero defect. For every such $k$,
  the defect $\delta_k(z)$ is the sum of the size of the root of $t_k$
  plus the sum of the defects of the roots of sub-trees of
  $t_k$. Iterating this process over the set $S$, one obtains the
  expected equality:
  \begin{equation*}
    x_i(z) - |\rt_t| = \sum_{w \in S} |w|.
  \end{equation*}

  Let then $f_{\rt_t}$ be the element of the summand $U_{\rt_t}$ with
  coordinate $i$ equal to $|\rt_t|$ and other coordinates $0$. Then
  $z' + f_{\rt_t} = z$, and therefore $z \in \tM_t$.
\end{proof}

\smallskip

One can use this Minkowski decomposition to define a canonical
polynomial whose Newton polytope is $Q_t$, namely
\begin{equation}
  \prod_{v} {\left(1 + \sum_{e \in \up(v)} x_e\right)}^{|v|}.
\end{equation}

In the case of the arbors of type $\TA$, namely linear arbors with no
multiple vertex, these polynomials have appeared in the literature in
relation with Catalan numbers. They are item 210 in Stanley's book
\cite{stanley_catalan}. According to the errata of this book, these
polynomials first appeared in 1982 in a problem by J. Shallit in
\cite{shallit82, shallit86}. They have been studied recently in
\cite{fried2021}.

\smallskip

Let us state a corollary of the Minkowski decomposition. Postnikov
introduced in~\cite{post09} the notion of $Y$-generalized
permutohedra, defined as Minkowski sums of scaled standard simplices.

Let $e_0,e_1,\ldots,e_d$ denote the standard basis of $\ZZ^{d+1}$. A
scaled standard simplex is the convex hull of a set of vectors
${\{k_i e_i\}}_{i \in I}$ where $I$ is a non-empty subset of
$\{0,1,\ldots,d\}$ and $k_i$ are positive integers.

\begin{lemma}\label{y_gen}
  The polytope $Q_t$ is, up to unimodular equivalence, a
  $Y$-generalized permutohedra.
\end{lemma}
\begin{proof}
  Let us consider the finest Minkowski decomposition of $Q_t$, where
  each summand $U_v$ is further decomposed as a Minkowski sum of equal
  terms. In this decomposition, each summand is the convex hull in
  $\ZZ^d$ of $0$ and some vectors ${\{e_i\}}_{i \in I}$ where $I$ is a
  non-empty subset of $\{1,\ldots,d\}$. This defines a polytope
  $\Delta_I$ in $\ZZ^d$, to which one associates a standard simplex
  $\Delta'_I$ in $\ZZ^{d+1}$ by replacing the vertex $0$ by the basis
  vector $e_0$. Let us consider the linear map $\rho$ from $\ZZ^{d+1}$
  to $\ZZ^d$ that forgets the first coordinate. Then $\rho$ maps
  $\Delta'_I$ to $\Delta_I$ and is moreover a unimodular equivalence
  from $\Delta'_I$ to $\Delta_I$.

  Let us write $Q_t = \sum_{s \in S} \Delta_{I_s}$ and
  $Q'_t = \sum_{s \in S} \Delta'_{I_s}$ over the appropriate indexing
  set $S$. Then $\rho$ induces an invertible integral affine map from
  the hyperplane $H$ in $\ZZ^{d+1}$ where coordinates sum to $|S|$ to
  $\ZZ^d$, that maps $Q'_t$ to $Q_t$. This is the expected unimodular
  equivalence.
\end{proof}

\section{Invariants}\label{sec:invariants}

We will be interested in several invariants of the polytope $Q_t$ and the
poset $P_t$. Let us explain how they can be computed by induction.

The basis of the inductions is always the unique isomorphism class of
arbor on a set with one element. The polytope $Q_t$ is in this case
the line segment $[0,1]$ in $\RR$ and the poset $P_t$ is a total order with
$2$ elements.

The induction steps will use the following property or some variation
of it, that come from the surjective projection map $\pi$ described in
\Cref{sec:notation}. Let $t$ be an arbor and $t_1,\ldots,\tw$ be the
sub-trees of the root vertex of $t$. If $e$ is a lattice point in
$Q_t$, then one can describe $e$ using one lattice point $e_k$ in
$Q_{t_k}$ for every $1 \leq k \leq \mathsf{w}$ by restriction, plus the
additional data of the coordinates of $e$ with indices in the root
vertex. The only constraint on these root coordinates is their sum,
namely the height of $e$ has to be the sum of the heights of all $e_k$
plus the sum of the root coordinates. The same statement holds for all
real points.

\subsection{Ehrhart polynomial}

Let us first consider the Ehrhart polynomial $E_t$ of the lattice
polytope $Q_t$. Recall that this is the unique polynomial $E_t$ in a
variable $u$ such that, for every integer $m \geq 1$, $E_t(m)$ is the
number of lattice points in the dilated polytope $m Q_t$. A standard
reference on the subject is \cite{barvinok}.

In order to compute the Ehrhart polynomial, we will need to refine the
count according to height. This is done in a somewhat indirect way,
maybe not optimal.

For every integer $m \geq 1$, let us introduce a polynomial $F_{t,m}$
in the variable $X$ that counts lattice points in the dilated polytope
$m Q_t$ with weight $X^h$ at height $h$. For an arbor of size $n$, the
height on $m Q_t$ is bounded above by $m n$, so $F_{t,m}$ has degree $m n$.

Note that this definition of $F_{t,m}$ could be extended \textit{verbatim} to
any lattice polytope in $\RR_{\geq 0}^n$. In this generality, it is clearly
multiplicative with respect to direct product of polytopes.

For the arbor $t$ on one element,
\begin{equation}
  F_{t,m} = 1 + X + \cdots + X^m.
\end{equation}

Let $t$ be an arbor on a set of cardinality $n \geq 2$. Let
$(t_1,\ldots,\tw)$ be the sub-trees of the root vertex of $t$. Let
$r$ be the size of the root vertex of $t$.

\begin{proposition}\label{calcul_ehr}
  Let $m \geq 1$ be an integer. Let $F_{W,m}$ be the product
  $\prod_{k=1}^{\mathsf{w}} F_{t_k,m}$. Write
  $F_{W,m} = \sum_{j} W_j X^j$. Then for $0 \leq j \leq m n$, the coefficient of $X^j$ in $F_{t,m}$ is
  \begin{equation}
    \sum_{\ell=\max(0, j + m(r-n))}^{j} \binom{r + \ell - 1}{\ell} W_{j - \ell}.
  \end{equation}
\end{proposition}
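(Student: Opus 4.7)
The plan is to parametrize lattice points of $mQ_t$ by pairs consisting of a lattice point of $m\prod_k Q_{t_k}$ and a nonnegative integer tuple of coordinates indexed by the root vertex $\rt_t$, and to show that the defining inequalities decouple nicely under the projection $\pi$.

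First, I would check that a lattice point $e \in \RR^I$ lies in $mQ_t$ if and only if (a) the nonroot coordinates form a lattice point of $m \prod_k Q_{t_k}$, (b) the root coordinates $(x_i)_{i\in \rt_t}$ are nonnegative integers, and (c) the single global inequality $\haut(\pi(e)) + \ell \leq m n$ holds, where $\ell = \sum_{i \in \rt_t} x_i(e)$. The point is that for every non-root vertex $v$ of $t$, the set $\down(v)$ is contained in the vertex set of whichever subtree $t_k$ contains $v$, so the inequality \eqref{def_inegalite} (scaled by $m$) at $v$ involves only coordinates appearing in $\pi(e)$ and is exactly the corresponding inequality of $m Q_{t_k}$. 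Only the inequality at $\rt_t$ mixes the root coordinates with the others, and it rearranges into (c).

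Next I would count. For a fixed height $h$ of $\pi(e)$, the number of lattice points of $m\prod_k Q_{t_k}$ at that height is exactly $W_h$, by definition of $F_{W,m} = \prod_k F_{t_k,m}$ and multiplicativity of this polynomial under direct products of polytopes. For a fixed sum $\ell$ of root coordinates, the number of ways to distribute $\ell$ among $r$ nonnegative integer slots is the stars-and-bars count $\binom{r+\ell-1}{\ell}$. The inequality (c) becomes $h + \ell \leq mn$, and the height of $e$ is $h+\ell$.

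Finally, to read off the coefficient of $X^j$ in $F_{t,m}$, I would set $j = h+\ell$, giving
\[
  [X^j]F_{t,m} \;=\; \sum_{\ell} \binom{r+\ell-1}{\ell}\, W_{j-\ell},
\]
and then determine the range of $\ell$. The upper bound $\ell \leq j$ comes from $h = j-\ell \geq 0$. The lower bound comes from the support of $F_{W,m}$: since the subtrees $t_1,\dots,t_{\mathsf{w}}$ have total size $n-r$, one has $W_h = 0$ for $h > m(n-r)$, hence the constraint $j - \ell \leq m(n-r)$, i.e.\ $\ell \geq j + m(r-n)$; combined with $\ell \geq 0$ this gives $\ell \geq \max(0, j + m(r-n))$. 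The global constraint $h+\ell \leq mn$ is automatic once $j \leq mn$.

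The only point requiring care is the decoupling of inequalities in the first step; everything else is bookkeeping of bounds. I do not foresee a genuine obstacle.
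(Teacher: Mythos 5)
Your proposal is correct and follows essentially the same route as the paper: decompose a lattice point of $mQ_t$ via the projection $\pi$ into a lattice point of $m\prod_k Q_{t_k}$ (counted by $W_{j-\ell}$) plus nonnegative root coordinates of fixed sum $\ell$ (counted by stars-and-bars), with the lower summation bound coming from the vanishing of $W_h$ for $h>m(n-r)$. The only difference is that you spell out the decoupling of the defining inequalities explicitly, which the paper delegates to its earlier description of the projection map $\pi$.
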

\begin{proof}
  Let us fix an integer $m$ and consider the dilated polytope $m Q_t$.

  For $0 \leq j \leq mn$, a point at height $j$ in $m Q_t$ can be
  described by the coordinates of its image by the projection map
  $\pi$ together with the remaining $r$ coordinates from the root vertex of
  $t$. Assume that the image by $\pi$ has height $j - \ell$, with $0
  \leq \ell \leq j$. The possible images are counted by $W_{j - \ell}$,
  which vanishes if $j - \ell > m (n - r)$ because the degree of
  $F_{w,m}$ is $m(n-r)$. Then the coordinates from the root vertex
  describe a point in $\NN^r$ with height $\ell$. This set is
  well-known to be counted by $\binom{r+\ell-1}{\ell}$.

  As this is a bijection, we get the expected sum over $\ell$.
\end{proof}

This allows to compute the usual Ehrhart polynomial by interpolation
of the values of $F_{t,m}$ at $X=1$ with respect to $m$.

For the running example, one finds that $E_t$ is
\begin{equation*}
  \frac{1}{12} \cdot (2u + 1) \cdot {(u + 1)}^3 \cdot (83 u^2 + 70 u + 12),
\end{equation*}
and so the volume is the leading coefficient $83/6$.

\begin{conjecture}
  For any arbor $t$, all roots of the Ehrhart polynomial of $Q_t$
  are negative real numbers in the interval $[-1,0]$, excluding $0$.
\end{conjecture}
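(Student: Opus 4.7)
The plan is to induct on the size $n$ of the arbor using the recursion of \Cref{calcul_ehr}. The base case $n=1$ is immediate since $E_t(u)=u+1$ has its unique root at $-1$. The single-vertex arbor of size $r$ is also direct: $Q_t$ is the $r$-fold dilate of the standard $r$-simplex, so $E_t(u)=\binom{r(u+1)}{r}$, whose roots $-1,-(r-1)/r,\dots,-1/r$ all lie in $[-1,0]$. Moreover one root comes for free in general: the lemma that $Q_t$ contains no interior lattice point, combined with Ehrhart reciprocity $E_t(-m)=(-1)^n L_t^{\circ}(m)$ evaluated at $m=1$, forces $E_t(-1)=0$ for every arbor $t$. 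Hence $-1$ is a systematic root and the task reduces to locating the other $n-1$ roots in $[-1,0]$.

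For the inductive step, set $r=|\rt_t|$ and let $t_1,\dots,\tw$ be the root sub-trees of total size $n-r$. Summing the coefficients of $F_{t,m}(X)$ in \Cref{calcul_ehr} via the hockey-stick identity $\sum_{\ell=0}^N\binom{r+\ell-1}{\ell}=\binom{r+N}{r}$ yields
\begin{equation*}
  E_t(m)=\sum_{k\geq 0} W_k(m)\,\binom{r+mn-k}{r},
\end{equation*}
where $\sum_k W_k(m)X^k=\prod_{j=1}^{\tw} F_{t_j,m}(X)$. To make inductive progress I would strengthen the hypothesis so that it applies to the refined polynomial $F_{t,m}(X)$ itself — for instance, that for each fixed $m$ it is real-rooted in $X$ with roots in a controlled interval, and that consecutive values of $m$ give interlacing sequences. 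Real-rootedness with roots in a fixed interval is preserved by products, which would handle the sub-tree factor $\prod_j F_{t_j,m}$; combining interlacing with the binomial factor (whose zero pattern in $m$ is transparent, being the Ehrhart polynomial of an $r$-simplex) should then transport the property back to $F_{t,m}$.

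The main obstacle is that real-rootedness is generally not preserved under sums. Viewed as a polynomial in $m$, the binomial $\binom{r+mn-k}{r}$ has roots at $(k+j-r)/n$ for $j=0,\dots,r-1$, which leave $[-1,0]$ as soon as $k>0$, so term-by-term reasoning fails and a naive bound on the root range of each summand is far too weak. Overcoming this should require either an interlacing or Hermite--Biehler / Stieltjes-transform argument, or a repackaging of the sum in which the unwanted roots provably cancel. Even in small cases this cancellation is delicate: the running arbor's Ehrhart polynomial $\tfrac{1}{12}(2u+1)(u+1)^3(83u^2+70u+12)$ keeps its two quadratic-irrational roots inside $[-1,0]$ only by virtue of a specific arithmetic relation between the coefficients $83$, $70$ and $12$, not from any manifest positivity. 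I expect this analytic control of cancellations, guided by the tree recursion, to be the heart of the proof.
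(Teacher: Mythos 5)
This statement is left as a conjecture in the paper: no proof is given, only the remark that it has been verified by computer for all arbors of size at most $11$. So there is no paper argument to compare against, and the question is whether your proposal actually closes the conjecture. It does not. What you establish correctly are the easy fragments: the case $n=1$; the single-vertex case, where $Q_t$ is the $r$-fold dilate of the standard simplex and $E_t(u)=\binom{r(u+1)}{r}$ has roots $-1,-(r-1)/r,\dots,-1/r$; and the fact that $-1$ is always a root, by combining the paper's lemma that $Q_t$ has no interior lattice point with Ehrhart reciprocity. The rewriting of \Cref{calcul_ehr} via the hockey-stick identity into $E_t(m)=\sum_k W_k(m)\binom{r+mn-k}{r}$ is also correct. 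But none of this touches the location of the remaining $n-1$ roots.

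The genuine gap is the entire inductive step, and you acknowledge it yourself: you propose to strengthen the hypothesis to real-rootedness and interlacing of the refined polynomials $F_{t,m}(X)$, but you neither formulate a precise induction hypothesis nor show that any such hypothesis is preserved by the recursion, nor even that it would imply the statement about $E_t(u)$. Note in particular a mismatch of variables: real-rootedness of $F_{t,m}$ in $X$ at fixed $m$ is a statement about the height generating function of a single dilate, whereas the conjecture concerns the roots in $u$ of the interpolating polynomial $E_t(u)$; no mechanism is offered for transferring root information from the first to the second. As you observe, the summands $W_k(m)\binom{r+mn-k}{r}$ individually have roots outside $[-1,0]$, sums do not preserve real-rootedness, and the needed cancellation (visible already in the running example, where the factor $83u^2+70u+12$ barely keeps its roots inside $[-1,0]$) is exactly the content of the conjecture. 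So the proposal is a plausible research plan with the central argument missing, not a proof; the statement remains open, consistent with its status in the paper.
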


Indeed $0$ is not a root because the constant term is always $1$. This
conjecture has been checked for all arbors of size at most $12$.

Note that this conjecture would imply that all coefficients of the
Ehrhart polynomial are positive. A lattice polytope with this property
is said to be \textit{Ehrhart positive}. Postnikov proved
in~\cite{post09} that all $Y$-generalized permutohedra are Ehrhart
positive, see also~\cite[\S 3.1]{liu_positivity} and~\cite[\S
6.1]{ferroni_higashitani}. By \cref{y_gen}, one can conclude that the
polytope $Q_t$ is Ehrhart positive.


\subsection{Volume}

Let us now consider the volume of the polytope $Q_t$. This could be
computed using the leading coefficient of the Ehrhart polynomial.
Instead, we refine here the volume into a volume function $V_t(h)$,
that records the volume of every slice of $Q_t$ according to the
height function $\haut$. This function vanishes outside the segment
$[0, n]$ where $n$ is the size of $t$.

This volume-function could be defined \textit{verbatim} for any polytope in
$\RR_{\geq 0}^n$. In this generality, the volume-function of a direct
product of polytopes is the convolution of the individual volume-functions.

For the arbor $t$ on one element,
\begin{equation}
  V_{t}(h) = 1,
\end{equation}
for $0 \leq h \leq 1$ and $0$ otherwise.

Let $t$ be an arbor on a set of cardinality $n \geq 2$. Let
$(t_1,\ldots,\tw)$ be the sub-trees of the root vertex of $t$. Let
$r$ be the size of the root vertex of $t$.

Let $\star$ denote the convolution product of functions on
$\RR_{\geq 0}$. Let $\trunc_n$ denote the operator that truncates
a function to the segment $[0,n]$, extended by zero outside of this segment.

\begin{proposition}
  The volume function $V_t$ is given by
  \begin{equation}
    \trunc_n (V_{t_1} \star V_{t_2} \star \cdots \star V_{\tw} \star W_{n,r}),
  \end{equation}
  where $W_{n,r}$ is the truncated function $\trunc_n\left(\frac{h^{r-1}}{(r-1)!}\right)$.
\end{proposition}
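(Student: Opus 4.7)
The plan is to parameterize $Q_t$ via the surjective projection $\pi : Q_t \to \prod_k Q_{t_k}$ from \Cref{sec:notation} and compute $V_t(h)$ by integrating out the fiber. A point $z \in Q_t$ is encoded as a pair $(\bar z, (x_i)_{i \in \rt_t})$, with $\bar z$ in the base product and $(x_i) \in \RR_{\geq 0}^r$ subject to $\sum_i x_i \le n - \haut(\bar z)$. Under this parameterization the height function decomposes additively as $\haut(z) = \haut(\bar z) + \sum_{i \in \rt_t} x_i$.

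Fix $h \in [0,n]$. Slicing $Q_t$ at height $h$ then amounts to choosing a base point $\bar z$ of height $h' \in [0,h]$ and an $r$-tuple of nonnegative root coordinates summing to $h - h'$. The latter set is a standard $(r-1)$-simplex whose volume, in the natural affine measure for which the convolution formula of volume functions holds, is $(h-h')^{r-1}/(r-1)!$. A Fubini argument yields
\begin{equation*}
  V_t(h) = \int_0^h V_{\prod_k Q_{t_k}}(h') \cdot \frac{(h-h')^{r-1}}{(r-1)!}\, dh'.
\end{equation*}
Using the multiplicativity of volume functions under direct products stated just before the proposition, $V_{\prod_k Q_{t_k}} = V_{t_1} \star \cdots \star V_{\tw}$. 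Since the argument $h - h'$ of the kernel lies in $[0,n]$ throughout the integration, the unbounded kernel $h^{r-1}/(r-1)!$ may be replaced by its truncation $W_{n,r}$ without altering the integrand, and the displayed identity becomes $V_t(h) = (V_{t_1} \star \cdots \star V_{\tw} \star W_{n,r})(h)$ for $h \in [0,n]$. For $h > n$ the polytope $Q_t$ has no slice of that height, so $V_t(h) = 0$ by definition; the convolution on the right can remain nonzero (its support extends up to $2n - r$), and this discrepancy is exactly what the outer operator $\trunc_n$ absorbs.

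The one step that warrants careful justification is the normalization of the slice measure. One must check that the $(n-1)$-dimensional measure implicit in the definition of $V_t$ factors, under the projection-based coordinate decomposition, as the product of the corresponding slice measure on the base $\prod_k Q_{t_k}$ and the affine measure on the fiber-slice $\{x \in \RR_{\ge 0}^r : \sum_i x_i = h - h'\}$ giving the advertised volume $(h - h')^{r-1}/(r-1)!$. This is true because $\pi$ is a coordinate projection and the relevant height functions are restrictions of the same linear form $\sum_i x_i$, so no Jacobian enters; nonetheless it is the point where the convention underlying the entire section must be pinned down to make the convolution formula rigorous.
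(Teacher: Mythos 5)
Your proof is correct and follows essentially the same route as the paper: decompose a point of $Q_t$ via the projection $\pi$ into a base point in $\prod_k Q_{t_k}$ and the $r$ root coordinates, use multiplicativity of slice-volume functions, identify the fiber-slice volume $(h-h')^{r-1}/(r-1)!$, and observe that the truncations are harmless for $h\le n$ and enforce vanishing for $h>n$. Your added care about the disintegration convention (so that $\int_0^n V_t(h)\,dh$ is the volume and no Jacobian appears) just makes explicit what the paper's shorter argument leaves implicit.
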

\begin{proof}
  Recall that a point in $Q_t$ is determined by a sequence of points
  $e_k$ in $Q_{t_k}$ by restriction and by a point in $\RR^r$ for the
  coordinates with indices in the root vertex of $t$. The additional
  constraint is the value of the height. Using multiplicativity, one
  checks that the volume of a slice in $Q_t$ is the expected convolution,
  except maybe for heights above $n$. This is fixed by the truncation
  operator wrapping the convolution products.
\end{proof}

The volume function allows to compute the volume by
integration from $0$ to $n$. This matches the leading coefficient of
the Ehrhart polynomial.

In general, the function $V_t$ is continuous and piecewise-polynomial with possible
change of polynomial at integers. In the running example for instance,
it is given by $h^5/120$ between $0$ and $1$, by three complicated
polynomials of degree $5$ between $1$ and $4$ and by $2 h - 14/3$ between $4$ and
$6$. Integrating from $0$ to $6$, one computes the volume to be $83/6$
as expected.

\subsection{Laplace transform of volume}

The volume function $V_t$ is in general a rather complicated
piecewise-polynomial function of the variable $h$. It is more
compactly encoded by its Laplace transform, defined by
\begin{equation}
  L_t(v) = \int_{0}^{\infty} e^{-v h} V_t(h) dh.
\end{equation}
Note that the value at $v=0$ recovers the volume of $Q_t$.

The Laplace transform sends the convolution product to
the product. It therefore remains to understand the truncation operator
$\trunc_n$ after the transform. We will only need the action of
$\trunc_n$ on the functions $e_{k, \ell}(v) = v^{-k} e^{-\ell v}$ with
$k > 0$ and $\ell \geq 0$.

\begin{lemma}\label{laplace_connu}
  For integers $k\geq 0$ and $\ell \geq 0$, let $f_{k,\ell}$ be the function on
  $\RR_{\geq 0}$ defined by $\frac{{(h-\ell)}^k}{k!}$ if $h > \ell$ and $0$
  otherwise. The Laplace transform of $f_{k,\ell}$ is $v^{-k-1} e^{-\ell v}$.
\end{lemma}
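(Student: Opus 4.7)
This is a classical Laplace transform identity, and the plan is just to establish it by direct computation, separating the roles of the shift $\ell$ and the polynomial part $h^k/k!$.

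The first step is to write the definition explicitly and use that $f_{k,\ell}$ vanishes on $[0,\ell]$, so
\begin{equation*}
L_{f_{k,\ell}}(v) = \int_\ell^\infty e^{-vh} \frac{(h-\ell)^k}{k!}\, dh.
\end{equation*}
The second step is the substitution $u = h - \ell$, which factors out $e^{-\ell v}$ and reduces the integral to
\begin{equation*}
e^{-\ell v} \int_0^\infty e^{-vu} \frac{u^k}{k!}\, du.
\end{equation*}
The third step is the standard evaluation $\int_0^\infty e^{-vu} u^k\, du = k!/v^{k+1}$ for $v > 0$, which is a Gamma-function integral, or equivalently can be proved by $k$ integrations by parts starting from the base case $\int_0^\infty e^{-vu}\, du = 1/v$. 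Dividing by $k!$ yields the factor $v^{-k-1}$, and combined with the prefactor $e^{-\ell v}$ this gives the claimed value $v^{-k-1} e^{-\ell v}$.

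There is essentially no obstacle here: the computation is elementary, and the only hypothesis needed is $v > 0$, under which both integrals converge absolutely (the result then extends as an identity of meromorphic functions in $v$). Equivalently, one can view the argument as an application of two well-known general principles, namely that the Laplace transform of $h^k/k!$ is $v^{-k-1}$ and that translation of the time variable by $\ell$ multiplies the transform by $e^{-\ell v}$; this decomposition neatly explains the shape of the answer and matches exactly how the lemma will be used in the subsequent computation of $L_t$.
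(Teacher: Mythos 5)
Your computation is correct and is exactly the standard argument the paper has in mind: the paper gives no proof beyond citing standard Laplace transform facts (the shift rule and the transform of $h^k/k!$, as in the DLMF table), which are precisely the two ingredients you write out explicitly via the substitution $u=h-\ell$ and the Gamma integral. Nothing further is needed.
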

This statement is easy to prove by standard properties of the Laplace
transform, see \cite[Table 1.14.4]{DLMF}.

\begin{lemma}
  If $\ell \geq n$, then $\trunc_n(e_{k, \ell}) = 0$. Otherwise
  \begin{equation}
    \label{truncator}
    \trunc_n(e_{k + 1, \ell}) = e_{k + 1, \ell} - \sum_{j=0}^{k} \frac{{(n-\ell)}^{k-j}}{(k-j)!}  e_{j+1, n}.
  \end{equation}  
\end{lemma}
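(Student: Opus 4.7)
The plan is to transfer everything to the $h$-side via the inverse Laplace transform, where truncation has an obvious meaning, and then read the identity off a binomial expansion.

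First I would note that $\trunc_n$ commutes with $\lap$ in the evident sense: if $g = \lap(f)$ for an admissible function $f$ on $\RR_{\geq 0}$, then $\trunc_n(g) = \lap(f \cdot \mathbf{1}_{[0,n]})$. Combined with \Cref{laplace_connu}, this gives the identification $e_{k+1,\ell} = \lap(f_{k,\ell})$, where $f_{k,\ell}(h) = \frac{(h-\ell)^k}{k!}$ for $h>\ell$ and $0$ otherwise.

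The first bullet is immediate: if $\ell \geq n$, the support of $f_{k,\ell}$ is $(\ell,\infty)\subseteq (n,\infty)$, so $f_{k,\ell}\cdot\mathbf{1}_{[0,n]}=0$ and hence $\trunc_n(e_{k,\ell})=0$.

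For the second bullet, I would write $\trunc_n(e_{k+1,\ell}) = e_{k+1,\ell} - \lap(f_{k,\ell}\cdot\mathbf{1}_{(n,\infty)})$ and compute the last Laplace transform by expanding the polynomial $(h-\ell)^k$ around $h=n$:
\begin{equation*}
\frac{(h-\ell)^k}{k!} \;=\; \frac{((h-n)+(n-\ell))^k}{k!} \;=\; \sum_{j=0}^{k} \frac{(n-\ell)^{k-j}}{(k-j)!}\cdot\frac{(h-n)^j}{j!}.
\end{equation*}
Multiplying by $\mathbf{1}_{(n,\infty)}$ (valid here since $\ell<n$, so the support condition $h>\ell$ is automatic for $h>n$) recognizes each summand as $\frac{(n-\ell)^{k-j}}{(k-j)!} f_{j,n}(h)$. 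Applying $\lap$ term by term and using \Cref{laplace_connu} once more, $\lap(f_{j,n}) = e_{j+1,n}$, yields exactly \eqref{truncator}.

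There is no real obstacle here: the only things to be slightly careful about are (i) to justify interchanging Laplace transform with finite sums (which is trivial, as it is just linearity), and (ii) to note in the $\ell<n$ case that the indicator $\mathbf{1}_{h>\ell}$ defining $f_{k,\ell}$ can be dropped on $(n,\infty)$, so that the binomial rearrangement is legitimate on that interval.
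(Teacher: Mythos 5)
Your proof is correct and follows essentially the same route as the paper: both write the truncation as multiplication by the indicator of $[0,n]$, split off the tail over $(n,\infty)$, and expand $(h-\ell)^k$ as $((h-n)+(n-\ell))^k$ to obtain the sum in \eqref{truncator}. The only cosmetic difference is that you recognize the summands as the functions $f_{j,n}$ and cite \Cref{laplace_connu} again, whereas the paper computes the tail integral directly after the same binomial expansion.
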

\begin{proof}
  Let $g_n(h)$ be the indicatrix function of the interval $[0,n]$. As
  $g_n = f_{0,0} - f_{0,n}$, its Laplace transform is
  $\lap g_n = \frac{1}{v}(1-e^{-nv})$. The truncation operator acts on
  functions of $h$ as the pointwise product with $g_n$. Because
  $f_{k,\ell}$ has support on $[\ell, \infty]$, this pointwise product
  $f_{k, \ell} g_n$ is the zero function if $\ell \geq n$. So
  $\trunc_n(e_{k, \ell})$ is zero in this case.

  Otherwise, assume $\ell < n$. Let us start from
  \begin{equation*}
    f_{k,\ell} g_n = f_{k,\ell}  - f_{0,n} f_{k,\ell}
  \end{equation*}
  and compute the Laplace transform:
  \begin{equation*}
    \frac{e^{-\ell v}}{v^{k+1}} - \int_{n}^{\infty} \frac{{(h-\ell)}^k}{k!} e^{-vh} dh.
  \end{equation*}
  The second term becomes, first replacing $h-\ell$ by $(h-n)+(n-\ell)$ and expanding,
  \begin{equation*}
    \sum_{j=0}^{k} \frac{{(n-\ell)}^{k-j}}{(k-j)!} \frac{e^{-nv}}{v^{j+1}}.
  \end{equation*}
  This proves the expected formula.
\end{proof}

One can therefore compute the Laplace transform $L_t(v)$ of the volume
function $V_t$ by first computing by induction a polynomial $L_t(E,V)$
in variables $E$ and $V$ using the following rules, and then replace
$E$ by $e^{-v}$ and $V$ by $1/v$. 

The initial value $L_t(E,V)$ for the arbor of size $1$ is $-V E + V$.

The truncation formula \eqref{truncator} translates into a similar
formula for the action of $\trunc_n$ on polynomials in $E$ and $V$.

Let $t$ be an arbor on a set of cardinality $n \geq 2$. Let
$(t_1,\ldots,\tw)$ be the sub-trees of the root vertex of $t$. Let $r$
be the size of the root vertex of $t$.

\begin{proposition}
  The polynomial $L_t(E,V)$ is $\trunc_n (\prod_{k=1}^{\mathsf{w}} L_{t_k}(E,V) \cdot \trunc_n(V^r))$.
\end{proposition}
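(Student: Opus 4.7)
The plan is to apply the Laplace transform to the convolutional recursion for $V_t$ given by the previous proposition, and then translate everything into the polynomial encoding via $E = e^{-v}$, $V = 1/v$. The recursion reads
\begin{equation*}
V_t = \trunc_n \bigl(V_{t_1} \star \cdots \star V_{\tw} \star W_{n,r}\bigr).
\end{equation*}
Since the Laplace transform sends convolution to product, the content of the statement to prove is essentially that the time-side operator $\trunc_n$ corresponds, on the Laplace side, to the symbolic operator $\trunc_n$ on polynomials in $E,V$ defined by the formula \eqref{truncator} of the preceding lemma.

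First, I would handle the simple piece $W_{n,r}$. Since $h^{r-1}/(r-1)!$ has Laplace transform $V^r$ by \Cref{laplace_connu} with $k = r-1$ and $\ell = 0$, and since $W_{n,r} = \trunc_n\bigl(h^{r-1}/(r-1)!\bigr) = f_{r-1,0}\cdot g_n$, the computation in the proof of the previous lemma gives directly $\lap W_{n,r} = \trunc_n(V^r)$ in the polynomial sense. Next, by induction, each $V_{t_k}$ is a piecewise polynomial function with finitely many polynomial pieces on subintervals of $[0,n_k]$, hence can be written as a finite linear combination of the basic functions $f_{k,\ell}$ of \Cref{laplace_connu}, so its Laplace transform is a polynomial in $E,V$, and equals $L_{t_k}(E,V)$ after substitution. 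The convolution $V_{t_1} \star \cdots \star V_{\tw} \star W_{n,r}$ is then still a piecewise polynomial function, and its Laplace transform is the product $\prod_k L_{t_k}(E,V) \cdot \trunc_n(V^r)$, expressible again as a linear combination of the monomials $V^{k+1}E^\ell$.

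The final step is to commute $\lap$ and $\trunc_n$. The function to be truncated is a linear combination of basic functions $f_{k,\ell}$; applying $\trunc_n$ term by term, the lemma exactly says that the Laplace transform of $\trunc_n f_{k,\ell}$ is the polynomial $\trunc_n(V^{k+1}E^\ell)$ defined by \eqref{truncator} (vanishing when $\ell \geq n$). By linearity this extends to the whole sum, so taking Laplace of both sides of the recursion yields
\begin{equation*}
L_t(E,V) = \trunc_n\Bigl(\prod_{k=1}^{\mathsf{w}} L_{t_k}(E,V) \cdot \trunc_n(V^r)\Bigr),
\end{equation*}
as claimed. The only real subtlety — and the step I would be careful to justify — is the decomposition of the convolution into a finite combination of basic $f_{k,\ell}$'s so that the termwise application of the lemma is legitimate; this is routine once one notes that convolutions of truncated polynomials are themselves piecewise polynomial with breakpoints at integer sums.
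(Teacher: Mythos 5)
Your proof is correct and follows essentially the same route as the paper, which simply invokes the preceding statements (Laplace turns the convolution recursion into a product, the truncation lemma handles $\trunc_n$ termwise, and $\lap W_{n,r}=\trunc_n(V^r)$ via \Cref{laplace_connu}). You merely spell out the details the paper leaves implicit, in particular the decomposition of the piecewise-polynomial convolution into the basic functions $f_{k,\ell}$ with integer shifts so that the lemma can be applied term by term.
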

\begin{proof}
  This follows from the preceding statements. The function $W_{n,r}$ has Laplace transform equal to $\trunc_n\left(v^{-r}\right)$ by \Cref{laplace_connu}.
\end{proof}

In the running example, this gives
\begin{equation*}
  -V^{6} E^{4} + 2 V^{6} E^{3} - 2 V^{5} E^{4} + 4 V^{5} E^{3} - 2 V^{2} E^{6} - 2 V^{6} E - 2 V^{5} E^{2} - \frac{22}{3} V E^{6} + V^{6}.
\end{equation*}

To recover the volume, one needs to take the limit at $v=0$ of $L_t(e^{-v},1/v)$.

\subsection{Zeta polynomial}

Let us now consider the Zeta polynomial of the poset $P_t$. This is a
polynomial $Z_t$ in the variable $u$ whose value at every integer
$m \geq 2$ is the number of chains
$e_1 \leq e_2 \leq \cdots \leq e_{m-1}$ in $P_t$.

We will need a mild refinement of this usual Zeta polynomial. Let
$\mathsf{Z}_t(u,X)$ be the polynomial in $u$ and $X$ such that for
every integer $m \geq 2$,
\begin{equation}
  \mathsf{Z}_t(m, X) = \sum_{e_1\leq\cdots\leq e_{m-1}} X^{\haut(e_{m-1})}
\end{equation}
where $\haut$ is the height of an element. The existence of this
polynomial will follow from the induction below. Setting $X=1$ forgets
about the height and gives the usual Zeta polynomial.

Note that if we consider a product of two graded posets endowed with the
sum of height functions, then the definition of $\mathsf{Z}$ is
naturally multiplicative, as chains are then just pairs of chains of the same length.

For the arbor $t$ on one element, one finds that
\begin{equation}
  \mathsf{Z}_{t}(u, X) = 1 + (u - 1) X.
\end{equation}

Let now $t$ be an arbor on a set of cardinality $n \geq 2$. Let
$(t_1,\ldots,\tw)$ be the sub-trees of the root vertex of $t$. Let
$r$ be the size of the root vertex of $t$.

\begin{proposition}
  Let $\mathsf{Z}_W$ be the product of $\prod_{k=1}^{\mathsf{w}} \mathsf{Z_{t_k}}$. Write
  $\mathsf{Z}_W = \sum_{j} W_j X^j$. Then for $0 \leq j \leq n$, the coefficient
  of $X^j$ in $\mathsf{Z}_t$ is
  \begin{equation}
    \sum_{\ell=\max(0, j-n+r)}^{j} \binom{r (u - 1) + \ell - 1}{\ell} W_{j - \ell}.
  \end{equation}
\end{proposition}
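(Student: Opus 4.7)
The plan is to mimic the proof of \Cref{calcul_ehr}, adapting it from a count of lattice points at fixed height to a count of multichains with fixed terminal height. The projection map $\pi$ and the root coordinates still give a decomposition of each chain, and the enumeration factors accordingly.

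Fix an integer $m \geq 2$. A multichain $e_1 \leq e_2 \leq \cdots \leq e_{m-1}$ in $P_t$ restricts, via $\pi$, to a multichain $\pi(e_1) \leq \cdots \leq \pi(e_{m-1})$ in $\prod_k P_{t_k}$, and the root coordinates yield a multichain of $r$-tuples $x^{(1)} \leq \cdots \leq x^{(m-1)}$ in $\NN^r$. This data determines the original chain, and conversely any such pair lifts to a valid chain in $P_t$: if the terminal $e_{m-1}$ has $\haut(\pi(e_{m-1})) = j - \ell$ and terminal root sum $\ell$, so $\haut(e_{m-1}) = j$, then for every $i$ the defining inequality at $\rt_t$ for $e_i$ reads
\begin{equation*}
  \sum_{s \in \rt_t} x_s(e_i) \leq n - \haut(\pi(e_i)),
\end{equation*}
and the left side is bounded by $\ell$ while the right side is at least $n - (j - \ell) \geq \ell$ (using $j \leq n$). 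So the inequalities attached to the remaining vertices already hold by $e_i \leq e_{m-1}$, and the lift is automatic. The bounds on $\ell$ come out as $0 \leq \ell \leq j$ together with $j - \ell \leq n - r$ (since the height on $\prod_k Q_{t_k}$ does not exceed $n - r$), giving $\ell \geq \max(0, j - n + r)$.

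Next I would count the two factors. Summing $X^{\haut(\pi(e_{m-1}))}$ over multichains in $\prod_k P_{t_k}$ yields $\prod_k \mathsf{Z}_{t_k}(m, X) = \mathsf{Z}_W(m,X) = \sum_j W_j(m)\, X^j$ by multiplicativity of the refined Zeta polynomial under direct products. For the root factor, I encode a multichain $0 \leq x^{(1)} \leq \cdots \leq x^{(m-1)}$ in $\NN^r$ by its successive differences $d^{(1)} = x^{(1)}$ and $d^{(i)} = x^{(i)} - x^{(i-1)}$ for $2 \leq i \leq m-1$. This gives a bijection with $(m-1)$-tuples of elements of $\NN^r$, and the terminal sum condition $\sum_s x^{(m-1)}_s = \ell$ becomes the condition that the $r(m-1)$ nonnegative integers entering these differences sum to $\ell$. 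Such tuples are counted by $\binom{r(m-1) + \ell - 1}{\ell}$.

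Putting the two counts together gives, for every integer $m \geq 2$, the coefficient of $X^j$ in $\mathsf{Z}_t(m,X)$ as
\begin{equation*}
  \sum_{\ell = \max(0, j - n + r)}^{j} \binom{r(m-1) + \ell - 1}{\ell}\, W_{j-\ell}(m),
\end{equation*}
which is exactly the claimed formula with $u$ substituted by $m$. The existence of the polynomial $\mathsf{Z}_t(u, X)$ is obtained simultaneously: by induction on the size of $t$, each $W_{j-\ell}$ is a polynomial in $u$, and $\binom{r(u-1) + \ell - 1}{\ell}$ is a polynomial of degree $\ell$ in $u$, so their products agree with a fixed polynomial at all integers $m \geq 2$. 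The main obstacle is purely bookkeeping: verifying that every chain in the product poset $\prod_k P_{t_k}$ can be combined with every admissible chain of root $r$-tuples with no extra compatibility to check, which is the content of the inequality chasing above.
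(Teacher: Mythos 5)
Your proof is correct and follows essentially the same route as the paper: decompose each multichain by separating the root coordinates from the others, use multiplicativity of the refined $\mathsf{Z}$ for the sub-tree factor, and count root-coordinate multichains in $\NN^r$ with terminal sum $\ell$. The only difference is in that last count, which the paper gets by invoking \Cref{zeta_B} for chains of maximal height at most $\ell$ and then differencing between $\ell$ and $\ell-1$, whereas you count directly via successive differences (a stars-and-bars bijection) to obtain the same $\binom{r(u-1)+\ell-1}{\ell}$, making your version slightly more self-contained.
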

\begin{proof}
  Let us fix an integer $m \geq 2$ and consider a chain
  $e_1 \leq \cdots \leq e_{m-1}$ in $P_t$ with $\haut(e_{m-1}) =
  j$. Because the interval $[0,e_{m-1}]$ is a product of total orders, one
  can decompose the chain as a product of chains of the same length by
  separating the coordinates in two disjoint subsets: $A$ for those in
  the root vertex of $t$ and $B$ for the others. Let $\ell$ be the sum
  of coordinates of $e_{m-1}$ in $A$ and $j - \ell$ be the sum of
  coordinates of $e_{m-1}$ in $B$. Note that $j - \ell \leq n - r$ by
  the defining inequalities for the sub-trees $t_1,\ldots,\tw$.

  On the set $B$, the restriction of the chain is a product of chains
  in the sub-trees $t_1,\ldots,\tw$ with maximal height $j -
  \ell$. Such chains are counted by $W_{j-\ell}$ evaluated at
  $u=m$. On the set $A$, the restriction of the chain is a chain in
  the poset $\NN^r$ ordered coordinate-wise, with maximal height equal
  to $\ell$. By \Cref{zeta_B} in \Cref{sec:B}, chains of length $m-1$
  in the poset $\NN^r$ with maximal height at most $\ell$ are counted
  by $\binom{r (m-1) + \ell}{\ell}$. Therefore, by taking the
  difference of values for $\ell$ and $\ell-1$, the possible
  restricted chains on the set $A$ are counted by
  $\binom{r (m-1) + \ell - 1}{\ell}$.

  This decomposition is a bijection, hence the expected formula holds
  for every $m \geq 2$. Hence it holds as an equality of polynomials in $u$.
\end{proof}

In the running example, one gets
\begin{multline*}
  \mathsf{Z}_t = \frac{1}{90} \cdot \Bigl(
{\left(802 \, u^{4} - 1839 \, u^{3} + 1598 \, u^{2} - 621 \, u + 90\right)} {\left(2 \, u - 1\right)} {\left(u - 1\right)} X^{6} \\
  + 6 \, {\left(53 \, u^{2} - 77 \, u + 30\right)} {\left(4 \, u - 3\right)} {\left(2 \, u - 1\right)} {\left(u - 1\right)} X^{5} \\
+ 45 \, {\left(68 \, u^{3} - 156 \, u^{2} + 119 \, u - 30\right)} {\left(u - 1\right)} X^{4}\\
+ 60 \, {\left(11 \, u - 10\right)} {\left(4 \, u - 3\right)} {\left(u - 1\right)} X^{3} \\
 + 90 \, {\left(17 \, u - 15\right)} {\left(u - 1\right)} X^{2} + 540 \, {\left(u - 1\right)} X + 90 \Bigr)
\end{multline*}
and so the usual Zeta polynomial is
\begin{equation*}
  \frac{1}{90} \cdot u \cdot (2 u - 1) \cdot (802 u^4 - 1369 u^3 + 893 u^2 - 266u + 30).
\end{equation*}

It seems that the set of arbors $t$ having the property that the
Zeta polynomial of $P_t$ factors completely over $\QQ$ consists exactly of
arbors of type $\TA$ considered in \Cref{sec:A}, arbors of type $\TB$
considered in section \Cref{sec:B} and one of the two families
considered in \Cref{sec:halohedra}.

\subsection{$M$-triangle}\label{subsec:M}

Let us now consider the $M$-triangle of the poset $P_t$. This is the
polynomial $M_t(X,Y)$ defined by \eqref{defiM} in
\Cref{app:triangles}, using values of the Möbius function.

In order to give a recursive formula, we need to compute instead the following polynomial
\begin{equation}
  \label{def_Kt}
  K_t(X,Y) = \sum_{a \in P_t}  X^{\nz(a)} Y^{\haut(a)},
\end{equation}
where $\nz(a)$ is the number of non-zero coordinates in $a$. Note that
the exponent of $X$ is smaller than or equal to the exponent of $Y$ in
any monomial in this formula.

Note that the polynomial $K_t$ is clearly multiplicative under product
for posets of the type that we are considering.

For the arbor $t$ on one element,
\begin{equation}
  K_{t} = 1 + XY.
\end{equation}

Let $t$ be an arbor on a set of cardinality $n \geq 2$. Let
$(t_1,\ldots,\tw)$ be the sub-trees of the root vertex of $t$. Let
$r$ be the size of the root vertex of $t$.

\begin{proposition}\label{recurrence_K}
  Let $W = \prod_k K_{t_k}$ and write
  $W = \sum_{i,j} W_{i,j} X^i Y^j$. Then for
  $0 \leq j \leq k \leq n$, the coefficient of $X^j Y^k$ in $K_t$ is
  \begin{equation}
    \sum_{\ell=\max(0,j-n+r)}^{\min(j, r)} \sum_{m=\max(\ell, k -n + r)}^{k+\ell-j} 
    \binom{r}{\ell} \binom{m-1}{m-\ell} W_{j-\ell,k-m}.
  \end{equation}
\end{proposition}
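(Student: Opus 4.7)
The plan is to count each lattice point $a \in P_t$ according to the decomposition $a = (a|_{\rt_t}, \pi(a))$, where $a|_{\rt_t} \in \NN^r$ records the coordinates of $a$ at indices in the root vertex and $\pi(a)$ is a lattice point in $\prod_k Q_{t_k}$. As noted in the preamble to Section \ref{sec:invariants}, this is a bijection between $P_t$ and pairs (root vector, point in $\prod_k Q_{t_k}$) such that the sum of their heights is at most $n$; all other defining inequalities of $Q_t$ are internal to the sub-polytopes $Q_{t_k}$ and reduce to the condition $\pi(a) \in \prod_k Q_{t_k}$.

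I would refine the count by introducing two auxiliary statistics on the root part: $\ell = \nz(a|_{\rt_t})$ and $m = \haut(a|_{\rt_t})$. Because $\nz$ and $\haut$ are additive under the above decomposition, one gets $\nz(\pi(a)) = j - \ell$ and $\haut(\pi(a)) = k - m$. The root-part count splits as (choose $\ell$ nonzero positions among $r$) $\times$ (write $m$ as an ordered sum of $\ell$ positive integers), contributing the factor
\[
\binom{r}{\ell}\binom{m-1}{\ell-1} = \binom{r}{\ell}\binom{m-1}{m-\ell}.
\]
The count of valid $\pi(a)$'s with the prescribed statistics is $W_{j-\ell,k-m}$: this uses the multiplicativity of $K$ under disjoint products of graded posets (already observed before the statement), so that $\prod_k K_{t_k}$ is the generating polynomial for the joint statistic $(\nz,\haut)$ on $\prod_k P_{t_k}$, and the latter is the set of lattice points of $\prod_k Q_{t_k}$.

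It then remains to determine the correct ranges of summation. For $\ell$: trivially $0 \leq \ell \leq r$ and $\ell \leq j$; moreover the nonzero count of $\pi(a)$ cannot exceed the ambient dimension $n-r$, giving $j - \ell \leq n - r$. This yields $\max(0, j-n+r) \leq \ell \leq \min(j,r)$. For $m$: the height of $\pi(a)$ is at most $n-r$, giving $m \geq k - n + r$; and the height of $\pi(a)$ is at least its nonzero count, giving $k - m \geq j - \ell$, i.e.\ $m \leq k + \ell - j$. Combining with $m \geq \ell$ (needed to decompose $m$ into $\ell$ positive parts) produces the claimed bounds $\max(\ell, k-n+r) \leq m \leq k+\ell-j$.

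I expect no real obstacle beyond careful bookkeeping. The only subtle point is the degenerate case $\ell = 0$, where the forced value $m = 0$ must be recovered from the formula: this is handled by the conventions $\binom{-1}{0} = 1$ and $\binom{m-1}{m} = 0$ for $m \geq 1$, which make $\binom{m-1}{m-\ell}$ vanish for all $m \geq 1$ when $\ell = 0$ and equal $1$ when $(\ell,m) = (0,0)$. Summing the bijective counts over the allowed ranges of $\ell$ and $m$ yields the stated formula for the coefficient of $X^j Y^k$ in $K_t$.
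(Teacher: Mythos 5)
Your proposal is correct and follows essentially the same route as the paper: decompose each $a\in P_t$ via the projection $\pi$ into its root coordinates and a point of $\prod_k P_{t_k}$, count the root part by $\binom{r}{\ell}\binom{m-1}{m-\ell}$ (choice of nonzero positions times compositions of $m$ into $\ell$ positive parts), invoke multiplicativity of $K$ for the factor $W_{j-\ell,k-m}$, and read off the summation bounds from the obvious inequalities. Your explicit treatment of the $\ell=0$ conventions is a small welcome addition, but the argument is the same as the paper's.
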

\begin{proof}
  Any element $a$ of $P_t$ can be described as an element $\pi(a)$ of
  $\prod_k P_{t_k}$ and $r$ additional coordinates from the root of
  $t$ for the fiber of $\pi$. Let us assume that $a$ has $j$ non-zero
  coordinates and height $k$, with $0 \leq j \leq k \leq n$. Assume
  now that $\pi(a)$ has $j-\ell$ non-zero coordinates and height $k-m$
  with $0 \leq \ell \leq j$ and $0 \leq m \leq k$. This also requires
  that $0 \leq j - \ell \leq k - m \leq n - r$.
  
  Then the additional $r$ coordinates must sum to $m$ and among them
  exactly $\ell$ must be non-zero. This implies that $\ell \leq r$ and
  $\ell \leq m$. The choice of these non-zero coordinates is counted
  by a binomial $\binom{r}{\ell}$. Once this choice is fixed, there
  remains to choose $\ell$ strictly positive numbers summing to $m$,
  which amounts to choose $\ell-1$ cuts among $m-1$ possible inner cuts
  in a sequence of length $m$.

  As this defines a bijection, one gets the expected double sum, where
  the projection $\pi(a)$ contributes by the term $W_{j-\ell,k-m}$ and
  the coordinates from the root contribute the product
  $\binom{r}{\ell} \binom{m-1}{m-\ell}$. The precise bounds in the
  double sum follows from the necessary inequalities as previously
  stated.
\end{proof}

Here is a slightly simplified version of \Cref{recurrence_K} that will
be useful.
\begin{proposition}\label{recurrence_K1}
  Let $W = \prod_k K_{t_k}$ and write
  $W = \sum_{i,j} W_{i,j} X^i Y^j$. Then for
  $0 \leq j \leq n$, the coefficient of $X^j$ in $K_t(X,1)$ is
  \begin{equation}
    \sum_{\ell=\max(0,j-n+r)}^{\min(j, r)} \sum_{k'=j-\ell}^{n-r} 
    \binom{r}{\ell} \binom{n-k'}{\ell} W_{j-\ell,k'}.
  \end{equation}
\end{proposition}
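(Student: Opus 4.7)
The plan is to derive this statement directly from \Cref{recurrence_K} by specializing $Y = 1$ and then performing a change of summation variable. Setting $Y = 1$ amounts to summing the coefficient of $X^j Y^k$ from \Cref{recurrence_K} over all $k$ from $0$ to $n$. So the first step is to write $[X^j] K_t(X,1)$ as the triple sum over $k$, $\ell$, and $m$ coming from that proposition.

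The key maneuver is the substitution $k' = k - m$ (matching the second index of $W_{j-\ell, k-m}$) together with interchanging the order of summation. The constraints $0 \leq k \leq n$ and $\max(\ell, k-n+r) \leq m \leq k + \ell - j$ translate, after substitution, into $j - \ell \leq k' \leq n - r$ together with $\ell \leq m \leq n - k'$. Crucially, the range of $m$ after the substitution depends only on $\ell$ and $k'$, not on $k$, which is precisely why the simplification is possible.

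With the sums decoupled, the next step is to apply the hockey stick identity to the remaining inner sum over $m$:
\begin{equation*}
  \sum_{m = \ell}^{n - k'} \binom{m-1}{m - \ell} \;=\; \sum_{m = \ell}^{n - k'} \binom{m-1}{\ell - 1} \;=\; \binom{n - k'}{\ell}.
\end{equation*}
This collapses the expression to exactly the claimed formula, with the outer ranges of $\ell$ and $k'$ matching the bounds in the statement.

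The computation is essentially routine, and the only real care required is in tracking the boundary inequalities through the change of variable (in particular the edge case $\ell = 0$, where one uses the convention that makes the hockey stick identity produce $\binom{n-k'}{0} = 1$). As a conceptual sanity check, one can reinterpret the factor $\binom{n-k'}{\ell}$ combinatorially: decomposing $a \in P_t$ with $\nz(a) = j$ into $\pi(a) \in \prod_k P_{t_k}$ plus $r$ root coordinates, and letting $\ell$ count the non-zero root coordinates while $k'$ denotes the height of $\pi(a)$, the number of ways to place $\ell$ strictly positive integers on selected root positions with total at most $n - k'$ (the constraint from the root inequality) is exactly $\binom{n-k'}{\ell}$ by a stars-and-bars argument. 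This makes the appearance of the binomial coefficient transparent and gives an independent verification of the formula.
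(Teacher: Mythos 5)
Your proposal is correct and follows essentially the same route as the paper: sum \Cref{recurrence_K} over $k$, substitute $k'=k-m$, and evaluate the now-decoupled inner sum over $m$ (which the paper leaves implicit and you make explicit via the hockey stick identity, including the $\ell=0$ edge case). The closing combinatorial reinterpretation is a nice independent check but not a different method.
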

\begin{proof}
  Starting from \Cref{recurrence_K} and summing over $k$, one gets
  \begin{equation}
    \sum_{k,\ell,m} \binom{r}{\ell}\binom{m-1}{m-\ell} W_{j-\ell,k-m}
  \end{equation}
  over the appropriate range. By first changing variable to
  $k'=k-m$, one can perform the inner summation over $m$ from $\ell$ to $n-k'$.
\end{proof}

The polynomial $K_t$ determines the $M$-triangle $M_t$ as follows.

\begin{proposition}
  For every arbor $t$, 
  \begin{equation}
    \label{M_from_K}
    M_t(X, Y) = K_t(1 - 1 / X, XY).
  \end{equation}
\end{proposition}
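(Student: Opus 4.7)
The plan is to compute the Möbius sums appearing in the definition of $M_t$ using the product-of-chains structure of intervals $[0,b]$ in $P_t$, and then match the outcome against the definition of $K_t$ after substitution. The identity will fall out as essentially a one-line binomial computation.

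Concretely, starting from the convention of \cite{chap_2004} that
\begin{equation*}
M_t(X,Y) = \sum_{a \leq b \text{ in } P_t} \mu(a,b)\, X^{\haut(a)} Y^{\haut(b)},
\end{equation*}
I would isolate the inner sum by fixing $b$. The lemma stated in \Cref{sec:notation} asserts that $[0,b]$ is a product of $\nz(b)$ total orders, the $i$-th of length equal to the $i$-th non-zero coordinate of $b$. Combined with the multiplicativity of the Möbius function on products of posets and its well-known values on a chain ($\mu(i,n)=1$ for $i=n$, $-1$ for $i=n-1$, and $0$ otherwise), this forces $\mu(a,b)$ to be non-zero precisely when $a$ is obtained from $b$ by lowering a subset $S$ of its non-zero coordinates by one; in that case $\mu(a,b)=(-1)^{|S|}$ and $\haut(a)=\haut(b)-|S|$, while zero coordinates of $b$ must remain zero in $a$.

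Summing over all such $a$ and applying the binomial theorem then gives
\begin{equation*}
\sum_{a \leq b} \mu(a,b)\, X^{\haut(a)} \;=\; X^{\haut(b)} \left(1 - \frac{1}{X}\right)^{\nz(b)}.
\end{equation*}
Multiplying by $Y^{\haut(b)}$, summing over $b \in P_t$, and comparing with the definition \eqref{def_Kt} of $K_t$ then yields
\begin{equation*}
M_t(X,Y) = \sum_{b \in P_t} \left(1-\tfrac{1}{X}\right)^{\nz(b)} (XY)^{\haut(b)} = K_t\!\left(1-\tfrac{1}{X},\, XY\right),
\end{equation*}
which is the claimed identity. There is no serious obstacle: the only structural input is the product-of-chains description of intervals, which is already available, and the remainder is a direct application of multiplicativity of $\mu$ and the binomial theorem.
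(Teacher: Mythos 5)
Your proof is correct and follows essentially the same route as the paper: fix $b$, use the product-of-chains description of $[0,b]$ to evaluate the inner Möbius sum as $X^{\haut(b)}{(1-1/X)}^{\nz(b)}$, then sum over $b$ and compare with the definition of $K_t$. You merely spell out the intermediate step (multiplicativity of $\mu$ and the binomial theorem) that the paper leaves implicit.
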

\begin{proof}
  First, write the $M$-triangle as a double sum
  \begin{equation*}
    \sum_{b \in P_t} Y^{\haut(b)} \sum_{a \leq b} \mu(a,b) X^{\haut(a)}.
  \end{equation*}
  Recall that the interval $[0,b]$ is a product of total orders whose
  lengths are the non-zero coordinates of $b$ plus $1$. It follows
  that the inner sum is
  \begin{equation*}
    \sum_{a \leq b} \mu(a,b) X^{\haut(a)} = X^{\haut(b)} {(1-1/X)}^{\nz(b)}.
  \end{equation*}
  The result then follows from the definition of $K_t$.
\end{proof}

In our running example, the $M$-triangle is
\begin{equation*}
\left(\begin{array}{rrrrrrr}
1 & -18 & 113 & -334 & 506 & -380 & 112 \\
-6 & 58 & -208 & 352 & -284 & 88 &  \\
15 & -92 & 201 & -188 & 64 &  &  \\
-20 & 78 & -98 & 40 &  &  &  \\
15 & -34 & 19 &  & &  &  \\
-6 & 6 &  &  &  &  &  \\
1 &  &  &  &  &  & 
\end{array}\right)
\end{equation*}
where the south-west to north-east diagonal records the number of elements in $P_t$ according to height.

\subsection{$f$-vector and $h$-vector}

As all intervals in the poset $P_t$ are products of total orders, the Hasse
diagram of $P_t$ can be viewed as the skeleton of a cubical complex. Let
us compute its $f$-vector, the polynomial in one variable $X$ where
the coefficient of $X^k$ is the number of $k$-dimensional cubical faces.

Every cubical face is determined by an interval $[a,b]$ in $P_t$. We
will gather them according to their top element $b$. When $b$ has
$\nz(b)$ non-zero coordinates, it contributes by the faces of a
$\nz(b)$-dimensional hypercube that are incident to a fixed vertex of this hypercube. The $f$-vector is therefore
\begin{equation}
  \label{def_ft}
  f_t(X) = \sum_{b \in P_t} {(1+X)}^{\nz(b)}.
\end{equation}
By~\eqref{def_Kt}, this can be computed from the polynomial $K_t$ as
\begin{equation}
  f_t(X) = K_t(1+X, 1).
\end{equation}

The $h$-vector of the arbor $t$ is defined as the generating polynomial
\begin{equation*}
    \label{def_ht}
    h_t(X) = \sum_{b \in P_t} {X}^{\nz(b)}.
\end{equation*}
It satisfies $h_t(X) = f_t(X-1) = K_t(X, 1)$. By~\eqref{M_from_K},
this implies that
\begin{equation}
  \label{h_from_M}
  h_t(X) = M_t\left(\frac{1}{1-X},1-X\right).
\end{equation}

\smallskip

In our running example, one finds that the $f$-vector of $P_t$ is
\begin{equation*}
  f_t(X) = X^{6} + 24X^{5} + 186X^{4} + 654X^{3} + 1152X^{2} + 990X + 330
\end{equation*}
and its $h$-vector is
\begin{equation*}
  h_t(X) = X^{6} + 18X^{5} + 81X^{4} + 130X^{3} + 81X^{2} + 18X + 1,
\end{equation*}
which is nicely palindromic.


\section{Ehrhart-Zeta duality for linear arbors}\label{sec:EhrhartZeta}

This section proposes a conjectural duality exchanging Ehrhart
polynomials and Zeta polynomials.

Let $t$ be an arbor such that each vertex of $t$ has at most one
sub-tree. This is called a \textit{linear arbor}. Still the vertices can
contain several elements.

For a linear arbor $t$, let $\rev(t)$ be the reverse of $t$, defined as
the linear arbor in which the vertices are in the opposite order. This
amounts to choose the root vertex of $\rev(t)$ to be the unique leaf of
$t$. Reversal is an involution on linear arbors.

For example, here are two arbors $t$ and $\rev(t)$, mutually reverse,
with their roots on the left.
\begin{center}
\begin{tikzpicture}
[grow=right]
\tikzstyle{every node}=[draw,shape=circle]

\node [fill=racines,thick] {$a,f$}
  child {node {$b,e$}
  child {node {$d$}
  child {node {$c$}}}}
;
\end{tikzpicture}
\quad\quad
\begin{tikzpicture}
[grow=right]
\tikzstyle{every node}=[draw,shape=circle]

\node [fill=racines,thick] {$c$}
  child {node {$d$}
  child {node {$b,e$}
  child {node {$a,f$}}}}
;
\end{tikzpicture}
\end{center}

\begin{conjecture}\label{EZ_conjecture1}
  For every linear arbor $t$, the Ehrhart polynomial $E_t(u)$ of the polytope $Q_t$ is equal
  to the shifted Zeta polynomial $Z_{\rev(t)}(u+1)$ of the poset $P_{\rev(t)}$.
\end{conjecture}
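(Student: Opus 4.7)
The plan is to proceed by induction on the length $\ell$ of the linear arbor $t$.

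\emph{Base case} ($\ell = 1$): the arbor is self-reversed, so $E_t(u) = \binom{(u+1)r_1}{r_1}$ counts lattice points in the dilated simplex, while $Z_t(u+1) = \sum_x \prod_i \binom{x_i + u - 1}{u-1}$, summed over lattice points $x$ of the simplex, collapses to the same binomial by two applications of the hockey-stick identity (Chu--Vandermonde).

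\emph{Inductive step setup.} I would first bring both invariants into a common aggregated form by collapsing each vertex $v_j$ to the partial sum $Y_j = \sum_{i \in v_j} x_i$. Iterating \Cref{calcul_ehr} along the linear chain yields
\begin{equation*}
E_t(u) \;=\; \sum_{\substack{Y \in \NN^\ell \\ \sum_{k\geq j} Y_k \,\leq\, u \sum_{k\geq j} r_k}} \prod_{j=1}^{\ell} \binom{Y_j + r_j - 1}{r_j - 1}.
\end{equation*}
On the Zeta side, since intervals $[0,x]$ in $P_{\rev(t)}$ are products of chains, the number of multichains of length $u$ ending at $x$ is $\prod_i \binom{x_i + u - 1}{u-1}$; the Vandermonde identity applied inside each vertex then gives
\begin{equation*}
Z_{\rev(t)}(u+1) \;=\; \sum_{\substack{W \in \NN^\ell \\ \sum_{k\leq j} W_k \,\leq\, \sum_{k\leq j} r_k}} \prod_{j=1}^{\ell} \binom{W_j + u r_j - 1}{u r_j - 1}.
\end{equation*}
After the flux substitutions $Z_j = u \sum_{k\geq j} r_k - \sum_{k\geq j} Y_k$ and $T_j = \sum_{k\leq j} r_k - \sum_{k\leq j} W_k$, both sides become transfer-matrix products of binomials of the shape $\binom{(u+1)r_j + \Delta_j - 1}{\beta_j}$ with $\Delta_j$ a local flux difference. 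The tops match term-by-term, but the bottoms differ: $\beta_j = r_j - 1$ on the Ehrhart side versus $\beta_j = u r_j - 1$ on the Zeta side.

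\emph{Main obstacle and strategies.} The natural recursion for $E_t(u)$ peels off the root $v_1$ of $t$, whereas the natural recursion for $Z_{\rev(t)}(u+1)$ peels off $v_\ell$ (which is the root of $\rev(t)$). These act at opposite ends of $t$, so induction cannot proceed in parallel; this is the crux of the difficulty. To bypass it I would try, in order: (a) refining both sides by a boundary statistic such as the value of the outermost flux variable, looking for a common recursion that both refinements satisfy; (b) presenting each transfer-matrix sum as a non-intersecting lattice-path configuration and applying Lindström--Gessel--Viennot to reduce both to the same binomial determinant; (c) introducing a $q$-refinement tied to the $q$-Zeta framework mentioned in the further remarks, where the two $q$-polynomials are expected to be related by $q \leftrightarrow q^{-1}$, and specializing at $q = 1$. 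Already at $u = 1$ the conjecture asserts the nontrivial equality $|P_t| = |P_{\rev(t)}|$, which in the type $\TA$ case reduces to Catalan-number identities for the Pitman--Stanley polytope recorded in \Cref{sec:A}; that case provides both a sanity check and the natural starting point for any bijective or $q$-analytic attack.
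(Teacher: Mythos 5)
First, a point of calibration: the statement you are proving is stated in the paper as \Cref{EZ_conjecture1}, a \emph{conjecture}; the paper offers no proof, only computer verification for linear arbors of size at most $11$ and a passing suggestion that unimodular triangulations might explain it. So there is no paper argument to match, and your submission must stand on its own as a proof — which it does not. Your preparatory steps are correct and worthwhile: the base case $\ell=1$ agrees with \Cref{zeta_B} (both sides equal $\binom{(u+1)r_1}{r_1}$), the aggregation of coordinates within each vertex is legitimate because all defining inequalities of $Q_t$ and $P_{\rev(t)}$ depend only on the vertex sums, and both displayed formulas for $E_t(u)$ and $Z_{\rev(t)}(u+1)$ (the latter via grouping multichains by their top element and convolving binomials inside each vertex) are correct. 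This reduces the conjecture to an explicit identity between two constrained sums of products of binomial coefficients, which is a useful reformulation.

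The genuine gap is that this is where the work would have to begin, and your write-up stops there. The equality of the two transfer-matrix sums \emph{is} the conjecture, restated; the observation that ``the tops match but the bottoms differ'' is a symptom of the asymmetry $u \leftrightarrow r$, not progress toward resolving it, and the announced induction never launches because, as you yourself note, the natural recursions (\Cref{calcul_ehr} on the Ehrhart side, peeling the root of $\rev(t)$ on the Zeta side) act at opposite ends of the chain. Items (a)--(c) are untested strategies, not steps of a proof: no common refined recursion is exhibited, no determinantal or non-intersecting-path structure is identified that would make Lindstr\"om--Gessel--Viennot applicable, and the $q$-refinement alluded to in the paper concerns only a reciprocity of leading coefficients of $q$-Zeta polynomials, which by itself cannot yield the full polynomial identity even after specializing $q=1$. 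As it stands the proposal is a correct reduction plus a research plan; to be a proof it would need to actually establish the binomial identity (or an equivalent bijection between lattice points of $uQ_t$ and $u$-multichains of $P_{\rev(t)}$), and none of the proposed routes is carried far enough to judge whether it could succeed.
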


By looking at the leading coefficients, \Cref{EZ_conjecture1} would identify the normalized volume of $Q_t$ to
the number of maximal chains of $P_{\rev(t)}$. Recall that in
dimension $n$ the normalized volume is $n!$ times the volume.

By the special case of the value at $u=1$, the posets of the arbors
$t$ and $\rev(t)$ should share the same number of elements. Something
finer seems to hold.

\begin{conjecture}\label{EZ_conjecture2}
  For every linear arbor $t$, the $h$-vectors of $t$ and $\rev(t)$ are equal.
\end{conjecture}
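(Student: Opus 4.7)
The plan is to reduce the statement to a one-variable identity and then attack it either bijectively or through generating functions. By the formula $h_t(X) = K_t(X,1) = \sum_{b \in P_t} X^{\nz(b)}$ of \Cref{subsec:M}, the conjecture is equivalent to saying that for every $j$ the number of lattice points of $Q_t$ with exactly $j$ nonzero coordinates equals the same count for $Q_{\rev(t)}$. Writing $t = (v_1, \ldots, v_k)$ with $a_i = |v_i|$, I would parameterize each lattice point by its block-sums $y_i = \sum_{l \in v_i} x_l$ together with a composition of $y_i$ into $a_i$ nonnegative parts. Setting $g_a(X, y) = \sum_{j \geq 1} \binom{a}{j}\binom{y-1}{j-1} X^j$ for $y \geq 1$ and $g_a(X, 0) = 1$, the identity to prove takes the form
\begin{equation*}
  \sum_{y \in D^-} \prod_i g_{a_i}(X, y_i) \;=\; \sum_{y \in D^+} \prod_i g_{a_i}(X, y_i),
\end{equation*}
where $D^- = \{y \in \NN^k : \sum_{i \geq j} y_i \leq \sum_{i \geq j} a_i \text{ for all } j\}$ and $D^+$ is the dual region defined by prefix-sum constraints. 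The crucial feature is that the weight $\prod_i g_{a_i}(X, y_i)$ is identical on both sides; only the summation domain changes.

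My first line of attack is an explicit weight-preserving bijection $\phi \colon D^- \to D^+$. Reversing the tuple $(y_1, \ldots, y_k)$ swaps the two regions but simultaneously permutes the $a_i$, so it destroys the weight; a correct $\phi$ must be more subtle. Small examples are instructive: for $(1,2)$ vs.\ $(2,1)$, $D^- \cap D^+$ is already large and the mismatched points come in families such as $(1,0,2) \in P_{(1,2)}$ paired with $(0,2,1) \in P_{(2,1)}$, suggesting a local redistribution rule perhaps guided by the Minkowski decomposition $Q_t = \sum_v U_v$ of \Cref{subsec:minkowski}. Failing a direct bijection, the two sums can be encoded as multivariate coefficient extractions
\begin{align*}
  K_t(X,1) &= [u_1^{N_1}\cdots u_k^{N_k}]\; \frac{\prod_{j=1}^k \theta(u_1 \cdots u_j)^{a_j}}{\prod_{j=1}^k (1-u_j)}, \\
  K_{\rev(t)}(X,1) &= [u_1^{M_1}\cdots u_k^{M_k}]\; \frac{\prod_{j=1}^k \theta(u_j \cdots u_k)^{a_j}}{\prod_{j=1}^k (1-u_j)},
\end{align*}
with $\theta(v) = (1 + (X-1)v)/(1-v)$, $N_j = \sum_{i \geq j} a_i$ and $M_j = \sum_{i \leq j} a_i$; equality of these two extractions could be attacked by a change of variables or a reflection-principle argument. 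A suggestive refinement comes from the example $t = (1,2)$, where $K_t(X,Y) - K_{\rev(t)}(X,Y) = X Y^2 (1 - Y)$: establishing divisibility of $K_t(X,Y) - K_{\rev(t)}(X,Y)$ by $(1-Y)$ for every linear $t$, via a sign-reversing involution pairing elements at height $h$ with elements at height $h+1$, would yield \Cref{EZ_conjecture2} by setting $Y = 1$.

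The main obstacle is that $D^-$ and $D^+$ are genuinely different regions (tail sums versus prefix sums) while the weight $\prod_i g_{a_i}$ is tied to the fixed ordering of the $a_i$, so none of the standard symmetries --- coordinate reversal, cyclic rotation, reflection at a path minimum --- preserves both at once. A useful auxiliary observation is that $h_t(X)$ appears to be palindromic for every linear arbor computed; if palindromicity could be established intrinsically (say via a Gorenstein-type property of the cubical complex of $P_t$) then only half the coefficients would need to be matched, and combining palindromicity with a coarser reversal-invariant statistic might then suffice. Finally, a proof of the stronger \Cref{EZ_conjecture1} should also imply \Cref{EZ_conjecture2}, but appears no easier to obtain directly.
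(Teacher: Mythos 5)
You have not proved anything here, and it is worth being clear that the paper does not either: \Cref{EZ_conjecture2} is stated as a conjecture, checked by computer for linear arbors of size at most $11$, and explicitly not pursued further, so there is no proof in the paper to match — the only question is whether your argument closes the conjecture, and it does not. Your preparatory reductions are correct and worthwhile: $h_t(X)=K_t(X,1)=\sum_{b\in P_t}X^{\nz(b)}$ does reduce the claim to counting lattice points of $Q_t$ and $Q_{\rev(t)}$ by number of nonzero coordinates; the block-sum parameterization with weight $g_{a_i}(X,y_i)=\sum_{j\geq 1}\binom{a_i}{j}\binom{y_i-1}{j-1}X^j$ and the tail-sum versus prefix-sum regions $D^-$, $D^+$ is the right reformulation; the coefficient-extraction encodings with $\theta(v)=(1+(X-1)v)/(1-v)$ are correct; and your sample computation $K_t(X,Y)-K_{\rev(t)}(X,Y)=XY^2(1-Y)$ for the arbor with blocks $(1,2)$ checks out. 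But every one of your three attack routes stops exactly where the difficulty begins: no weight-preserving bijection $\phi\colon D^-\to D^+$ is defined (and you yourself note that the obvious symmetries fail), the equality of the two coefficient extractions is asserted as something that ``could be attacked'' rather than proved, and no sign-reversing involution is constructed.

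One of your proposed reductions is moreover circular rather than a genuine stepping stone. Since $K_t(X,Y)-K_{\rev(t)}(X,Y)$ is a polynomial in $Y$, divisibility by $(1-Y)$ is \emph{equivalent} to vanishing at $Y=1$, which is exactly the statement $K_t(X,1)=K_{\rev(t)}(X,1)$ you are trying to prove; so ``first establish divisibility by $(1-Y)$, then set $Y=1$'' restates the conjecture rather than refining it (what would be new is the involution mechanism itself, which you do not supply). Similarly, palindromicity of $h_t$ — even if established intrinsically — would only halve the work, not complete it, and the appeal to \Cref{EZ_conjecture1} concedes that route is no easier. In short: a sound reformulation and a reasonable research plan, but the conjecture remains open after your proposal.
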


\smallskip

For example, for the two arbors above, both with $501$ elements, one gets
\begin{alignat*}{2}
  E_t &= \frac{u+1}{720} \cdot (20167 u^5 + 57230 u^4 + 62225 u^3 + 32170 u^2 + 7848 u + 720),\\
  Z_t &= \frac{u}{60} \cdot (2 u - 1) \cdot (898 u^4 - 1611 u^3 + 1002 u^2 - 249 u + 20),\\
  E_{\rev(t)} &= \frac{u+1}{60} \cdot (2 u + 1) \cdot (898 u^4 + 1981 u^3 + 1557 u^2 + 514 u + 60),\\
  Z_{\rev(t)} &= \frac{u}{720} \cdot (20167 u^5 - 43605 u^4 + 34975 u^3 - 12795 u^2 + 2098 u - 120).\\
\end{alignat*}

The volume of $Q_t$ is $20167/720$ and the number of maximal chains of
$P_t$ is $21552$. Conversely, the volume of $Q_{\rev(t)}$ is $21552/720$ and
the number of maximal chains of $P_{\rev(t)}$ is $20167$. Both $t$ and $\rev(t)$ have $h$-vector
\begin{equation*}
  x^6 + 23 x^5 + 122 x^4 + 209 x^3 + 122 x^2 + 23 x + 1.
\end{equation*}

Both conjectures have been checked by computer for all linear arbors
of size at most $11$. We will not purse these conjectures further
here. One may wonder if the first one can be explained by the
existence of unimodular triangulations.

\section{Transmutation of $M$-triangles}\label{sec:transmutation}

Recall from \Cref{app:triangles} that the $M$-triangle of a graded
poset is the following polynomial:
\begin{equation*}
  m = \sum_{a \leq b} \mu(a, b) X^{\haut(a)} Y^{\haut(b)},
\end{equation*}
where $\mu$ is the Möbius function of the poset.

Let us introduce the following operation, called transmutation:
\begin{equation}
  \label{transmutation}
  \overline{m}(X,Y) = m\left(\frac{1-Y}{1-XY},1-XY\right).
\end{equation}
Transmutation is an involution, as can be checked directly. We will
explain later where it comes from.

\smallskip

Note that for the posets $P_t$, the $M$-triangle $M_t$ can be computed from
the simpler polynomial $K_t$ as defined in \eqref{def_Kt}. It follows
that the transmuted $M$-triangle is
\begin{equation}
  \label{transmuted_M_from_K}
  \overline{M}_t(X,Y) = K_t\left(\frac{Y(X-1)}{1-Y}, 1-Y\right).
\end{equation}

\smallskip

As explained at length in \Cref{subsec:motivation}, one of the main
motivations for the article is the following hope: there
should exist another graded poset $\NC_t$ whose $M$-triangle is
$\overline{M}_t$.

\smallskip

For example, consider the arbor $t$ equal to
\begin{center}
\begin{tikzpicture}
[grow=right]
\tikzstyle{every node}=[draw,shape=circle]

\node [fill=racines,thick] {$a$}
  child {node {$b$}
  child {node {$c$}}}
;
\end{tikzpicture}.
\end{center}
Then the poset $P_t$ has $14$ elements. Its $M$-triangle and transmuted $M$-triangle are
\begin{equation*}
  \left(\begin{array}{rrrr}
          -1 & 6 & -10 & 5 \\
          3 & -8 & 5 &  \\
          -3 & 3 &  &  \\
          1 &  & & 
        \end{array}\right)\quad \text{and}\quad\left(\begin{array}{rrrr}
-5 & 10 & -6 & 1 \\
10 & -16 & 6 &  \\
-6 & 6 &  &  \\
1 &  &  & 
\end{array}\right),
\end{equation*}
displayed as matrices of coefficients, with the constant term at
the bottom left.

One can check that the transmuted $M$-triangle is the $M$-triangle of
the classical lattice of noncrossing-partitions of a set of size $4$,
as defined in \cite{kreweras}.

\smallskip

\begin{proposition}\label{transmuted_h}
  The diagonal of the transmuted $M$-triangle is the $h$-vector of $t$.
\end{proposition}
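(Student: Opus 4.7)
The plan is to compute directly the coefficient of $X^k Y^k$ in $\overline{M}_t(X,Y)$ using the explicit formula~\eqref{transmuted_M_from_K}, and then match it with the coefficient of $X^k$ in $h_t(X) = K_t(X,1)$ already established at the end of the $f$-vector subsection. The fact that $\overline{M}_t$ is built from $K_t$ rather than from the more complicated $M$-triangle is what makes this tractable: I can work with sums over $P_t$ weighted by the two simple statistics $\nz$ and $\haut$.

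First I substitute the definition $K_t(U,V) = \sum_{a \in P_t} U^{\nz(a)} V^{\haut(a)}$ into~\eqref{transmuted_M_from_K}. Writing $p = \nz(a)$ and $q = \haut(a)$ for brevity, the factor $(1-Y)^{p}$ in the denominator of $U = Y(X-1)/(1-Y)$ cancels against $p$ powers of $V = 1-Y$, giving the clean expression
\begin{equation*}
\overline{M}_t(X,Y) = \sum_{a \in P_t} Y^{p}\,(X-1)^{p}\,(1-Y)^{q-p}.
\end{equation*}
Note that $q \ge p$ for every $a \in P_t$, so $(1-Y)^{q-p}$ is a genuine polynomial and the whole expression is a polynomial in $X$ and $Y$.

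Now I extract the coefficient of $X^k Y^k$ summand by summand. From the factor $(X-1)^p$ the coefficient of $X^k$ is $\binom{p}{k}(-1)^{p-k}$, nonzero only when $k \le p$. The remaining factor $Y^p(1-Y)^{q-p}$ contributes $\binom{q-p}{k-p}(-1)^{k-p}$ to the coefficient of $Y^k$, nonzero only when $p \le k \le q$. The two inequalities $k \le p$ and $k \ge p$ force $k = p$, and in that case both binomials equal $1$ and the two signs cancel. Thus
\begin{equation*}
[X^k Y^k]\,\overline{M}_t(X,Y) \;=\; \#\{a \in P_t : \nz(a) = k\}.
\end{equation*}

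Since $h_t(X) = K_t(X,1) = \sum_{a \in P_t} X^{\nz(a)}$, the coefficient of $X^k$ in the $h$-vector is precisely the same count, which finishes the proof. There is no real obstacle: everything reduces to a coefficient extraction once the simplified product form of $\overline{M}_t$ is in hand, and the crucial rigidity is that the binomial support conditions collapse $k \le p$ and $k \ge p$ to a single equality, leaving an unweighted count of elements of $P_t$ with a prescribed number of nonzero coordinates.
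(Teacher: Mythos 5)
Your proof is correct and follows essentially the same route as the paper: both rewrite $\overline{M}_t$ via \eqref{transmuted_M_from_K} as $\sum_{a\in P_t}(Y(X-1))^{\nz(a)}(1-Y)^{\haut(a)-\nz(a)}$ and observe that in each summand the power of $Y$ dominates the power of $X$, with equality forcing exactly the term $(XY)^{\nz(a)}$, so the diagonal is $K_t(XY,1)$, i.e.\ the $h$-vector. Your explicit binomial-coefficient extraction is just a more detailed writing of the paper's one-line observation.
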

\begin{proof}
  Formula \eqref{transmuted_M_from_K} expresses the transmuted
  $M$-triangle as
  \begin{equation*}
    \sum_{a \in P_t} {(1-Y)}^{\haut(a)- \nz(a)} {(Y(X-1))}^{\nz(a)}.
  \end{equation*}
  In every monomial in this expression, the power of $Y$ is larger than
  or equal to the power of $X$. The restriction to monomials with equal powers is
  \begin{equation*}
    \sum_{a \in P}{(YX)}^{\nz(a)},
  \end{equation*}
  which is equal to $K_t(XY,1)$ encoding the $h$-vector of $t$.
\end{proof}

\smallskip

Let us now say a few words about the origin of the transmutation operation.

Closely linked to the $M$-triangle, there is another two-variable
polynomial called the $H$-triangle. They are related by the
transformations \eqref{M_from_H} and \eqref{H_from_M} recalled in
\Cref{app:triangles}.

These triangles and conversion rules between them first appeared in
the study of the combinatorics of finite type cluster algebras
\cite{chap_2004, chap_full}. In this context, the $M$-triangle was
considered for the noncrossing-partitions lattice attached to an
$\TA\TD\TE$-quiver. The $H$-triangle was a natural refinement of
the $h$-vector of the cluster fan for the same quiver.

One can define transmutation of $M$-triangles as the conjugation of
\textit{transposition} on $H$-triangles by the conversion maps between
$H$-triangles and $M$-triangles. Here transposition just means, viewing
$H$-triangles as triangular matrices in the appropriate way, the
transposition of matrices.

Note that the $H$-triangle can be interpreted as a refined
decomposition of the homology groups of a smooth toric variety. In
this context, the transposition has no clear meaning, as it exchanges
the homological grading with the other additional grading.

\section{Unary-binary arbors and quadrangulations}\label{sec:unarybinary}


Let us say that an arbor is unary-binary if 
\begin{itemize}
\item every vertex has one element and
\item every vertex has at most 2 sub-trees.
\end{itemize}
We will talk about vertices of valency $0$, $1$ or $2$.

For example, here is a unary-binary arbor:
\begin{center}
\begin{tikzpicture}
[grow=right]
\tikzstyle{every node}=[draw,shape=circle]

\node [fill=racines,thick] {$a$}
  child {node {$b$}}
  child {node {$c$}
  child {node {$d$}}}
;
\end{tikzpicture},
\end{center}
where the valencies of $a,b,c,d$ are $2,0,1,0$.

Given such an arbor $t$, let us choose a planar embedding and assume
that the arbor is drawn with the root on the left and growing to the
right as above.

Using this picture, one can define a quiver $G_t$ as follows. The
vertices of $G_t$ are the vertices of $t$. The edges of $G_t$ are the
edges of $t$ suitably oriented. Let $v$ be a vertex $v$ of valency
$2$. Let $\alpha_v$ and $\beta_v$ the two edges between $v$ and its
$2$ sub-trees, from top to bottom. We orient $\alpha_v$ towards $v$ and
$\beta_v$ from $v$ outwards. Other edges are oriented arbitrarily.

We now consider $G_t$ as a quiver-with-relations for the relations
$\alpha_v \beta_v = 0$ for all vertices $v$ of valency $2$.

For instance, by choosing to orient the remaining arrow from $d$ to $c$, the
previous example gives the quiver-with-relations
\begin{center}
\begin{tikzpicture}[->,>=stealth,auto,node distance=2cm,
  thick,main node/.style={circle,draw=black}]
  \node[main node] (1) {$d$};
  \node[main node] (2) [right of=1] {$c$};
  \node[main node] (3) [right of=2] {$a$};
  \node[main node] (4) [right of=3] {$b$};

  \draw [->] (1) -- (2);
  \draw [->] (2) -- node {$\alpha$}(3);
  \draw [->] (3) -- node {$\beta$} (4);
  \draw [dashed,-] (2) to [out=45,in=135] (4);
\end{tikzpicture}.
\end{center}

We refer the reader to \cite{gentle2010, FDatlas} for the definition and properties of
gentle quivers and algebras. Essentially this means that at every
vertex, there are at most two incoming arrows $\alpha, \alpha'$ and at
most two outgoing arrows $\beta, \beta'$ and there are relations
$\alpha \beta$ and $\alpha' \beta'$ when they make sense.

\begin{lemma}
  The quiver-with-relations $G_t$ is a gentle quiver.
\end{lemma}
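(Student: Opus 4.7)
The plan is to unpack the definition of a gentle quiver-with-relations and verify its three ingredients in turn for $G_t$: a bound on the in- and out-degrees at each vertex, that the ideal is generated by paths of length two, and the gentle pairing condition at each vertex.

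For the degree bound, every vertex $v$ of $t$ has at most three neighbours in the underlying graph of $G_t$: its parent (if present) together with its at most two children. When $v$ has valency $2$, the construction forces $\alpha_v$ to be incoming at $v$ and $\beta_v$ to be outgoing at $v$, so the two child-edges split as one incoming and one outgoing arrow; adding the parent-edge, in either orientation, then produces at most two incoming and at most two outgoing arrows at $v$. When $v$ has valency $0$ or $1$, the total degree of $v$ is at most two and both bounds are trivial.

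For the relations, the generators $\alpha_v\beta_v$ of the ideal are paths of length two by construction. The gentle pairing condition would then be checked vertex by vertex. If $v$ has valency $\neq 2$, no generator of $I$ has $v$ as its middle vertex, so every composable pair at $v$ is a non-relation and, since the degree is at most two, there is at most one such pair and the pairing conditions are vacuous. If $v$ has valency $2$ and is the root, the only composable pair at $v$ is $(\alpha_v,\beta_v)$, which is a relation. Otherwise $v$ has valency $2$ and a parent, and the parent-edge is either an extra incoming arrow $\alpha'$ or an extra outgoing arrow $\beta'$; the composable pairs at $v$ are then $(\alpha_v,\beta_v)\in I$ together with either $(\alpha',\beta_v)\notin I$ or $(\alpha_v,\beta')\notin I$. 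In every configuration, each arrow at $v$ has at most one continuation lying in $I$ and at most one lying outside $I$, dually on both sides.

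The main obstacle is really just careful bookkeeping. The orientation rule at valency-$2$ vertices is designed precisely so that the two child-edges contribute one incoming and one outgoing arrow, which simultaneously controls the degrees and ensures that the relation $\alpha_v\beta_v$ is paired off correctly against whichever orientation the parent-edge happens to take. A small point worth checking along the way is that a given edge of $t$ can be of type $\alpha_v$ for at most one vertex $v$ and of type $\beta_v$ for at most one vertex, since these roles are determined by the top-or-bottom position of the edge at its valency-$2$ parent, so distinct generating relations do not collide at a shared middle vertex.
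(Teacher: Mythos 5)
Your proof is correct and follows the same route as the paper's (much terser) argument: bound the in- and out-degrees using the fact that each vertex of $t$ has at most three incident edges together with the forced orientation of $\alpha_v,\beta_v$ at valency-$2$ vertices, and observe that at any degree-$3$ vertex the unique relation $\alpha_v\beta_v$ pairs off the arrows as the gentle condition requires. Your extra bookkeeping (the case analysis on the parent edge and the remark that an edge plays the role $\alpha_v$ or $\beta_v$ only for its parent) simply spells out what the paper leaves implicit.
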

\begin{proof}
  Every vertex is incident to at most $3$ edges and there can be no
  sink and no source vertex. If there are three incident edges, two of
  them are linked by a relation.
\end{proof}

Let us recall briefly now an important construction starting from the
gentle quiver $G_t$. This comes from $\tau$-tilting theory as
introduced in \cite{tau_tilting}. To every finite-dimensional algebra
over a field, one can associate a set of support-$\tau$-tilting
objects. These objects then naturally appear as vertices in a regular
graph called the mutation graph. It happens, for specific classes of
finite-dimensional algebras, that the mutation graph of
support-$\tau$-tilting objects is finite. That finiteness and its
consequences hold notably for the gentle algebras $G_t$ that we
consider here.

In this case, the mutation graph is in fact oriented and the Hasse
diagram of a partial order which is congruence-uniform. There is also
a simple polytope whose skeleton is the mutation graph. We refer to
\cite{ppp_tau} for these properties in the case of gentle algebras.

\smallskip

This finite mutation graph on support-$\tau$-tilting objects for the quiver-with-relations $G_t$ seems to be
related to the poset $P_t$ itself by the following conjecture.
\begin{conjecture}\label{conjecture-quad}
  The number of support $\tau$-tilting objects for the gentle
  quiver-with-relations $G_t$ is equal to the number of elements of
  the poset $P_t$.
\end{conjecture}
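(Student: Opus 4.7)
The plan is to prove \Cref{conjecture-quad} by showing that the two quantities satisfy the same recursion under the sub-tree decomposition of $t$. Since the arbor is unary-binary, the root has valency $w \in \{0,1,2\}$ and contains a single element, so the general projection formulas of \Cref{sec:invariants} specialize in a particularly clean way.

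First I would record the $P_t$ side. Writing $n = |t|$ and using the surjective projection $\pi$ to $\prod_k P_{t_k}$, the single root coordinate takes values in $\{0,1,\dots,n-\sum_k\haut(e_k)\}$, so
\begin{equation*}
  |P_t| = \sum_{(e_1,\dots,e_w) \in \prod_k P_{t_k}} \bigl(n - \textstyle\sum_k \haut(e_k) + 1\bigr).
\end{equation*}
This depends on each sub-tree only through the pair $(|P_{t_k}|, \sum_{a\in P_{t_k}} \haut(a))$, so the natural strengthening of the inductive hypothesis tracks both numbers jointly — essentially the specialization $K_{t_k}(1,Y)$ of the polynomial of \Cref{subsec:M}.

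Next I would build a parallel recursion for the number of support $\tau$-tilting modules over $G_t$. The quiver $G_t$ is obtained from $G_{t_1}\sqcup\dots\sqcup G_{t_w}$ by adjoining the root vertex together with one or two new arrows to the roots of the sub-trees, subject to the gentle relation $\alpha_{\rt_t}\beta_{\rt_t}=0$ when $w=2$. The key tool here is the combinatorial model for support $\tau$-tilting modules over gentle algebras cited as \cite{ppp_tau}, in which they correspond to facets of the non-kissing complex. Decomposing facets of the complex of $G_t$ according to which strings pass through the new arrows at the root, each choice of non-kissing configuration on the sub-trees extends by a certain number of completions at the root; the goal is to show that this completion count is exactly the linear weight $(n-\sum_k\haut(e_k)+1)$ appearing on the $P_t$ side.

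The hard part will be precisely this identification. I would try to construct a direct bijection between lattice points $e \in P_{t_k}$ and strings in $G_{t_k}$ terminating at the root of $t_k$, with $\haut(e)$ matching the length of the corresponding string; such a bijection would provide a lattice-polytope model for the relevant non-kissing data at the root and would make the two sub-tree recursions match termwise, giving the conjecture by induction on $|t|$. As a fallback strategy, one could instead work at the level of generating functions over isomorphism classes of unary-binary arbors: both counts should satisfy a quadratic functional equation inherited from the species equation for unary-binary rooted trees, and checking that the two functional equations coincide, together with the base case $|t|=1$ (where both sides equal $2$), would suffice.
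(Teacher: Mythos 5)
The statement you are trying to prove is stated in the paper as a \emph{conjecture}: the paper offers no proof, only computational verification for small sizes, so there is no ``paper proof'' to compare against. Your text is a proof plan rather than a proof, and the gap sits exactly at the heart of the matter. The $P_t$ side of your recursion is fine: since the root of a unary-binary arbor is a single element, the fiber of $\pi$ over $(e_1,\dots,e_{\mathsf{w}})$ contributes $n-\sum_k\haut(e_k)+1$ lattice points, so $|P_t|$ is the stated weighted sum over $\prod_k P_{t_k}$, and tracking $K_{t_k}(1,Y)$ is the right refinement. But on the $\tau$-tilting side you only announce that facets of the non-kissing complex of $G_t$ should decompose over the sub-trees with a completion count at the root equal to that same linear weight; you give no argument for this, and it is not a routine bookkeeping step. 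Strings and walks of $G_t$ can pass through the root and couple the two sub-quivers (when the root has valency $2$, with the gentle relation $\alpha_{\rt_t}\beta_{\rt_t}=0$ constraining which walks exist), so the non-kissing complex of $G_t$ is not obtained from those of the $G_{t_k}$ by any obvious product-plus-fiber construction. Establishing the claimed termwise match, e.g.\ via your proposed bijection between lattice points of $P_{t_k}$ and strings ending at the root of $G_{t_k}$ with height matching length, is precisely the open content of the conjecture; until that bijection (or an equivalent recursion for the facet count) is actually constructed and proved, nothing has been shown. A secondary point you would also need to address: the quiver $G_t$ involves arbitrary orientation choices for the edges not forced by the relations, so a proof must either fix a convention or show the support $\tau$-tilting count is independent of these choices.

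Your fallback strategy does not repair this. The number of elements of $P_t$ (and, presumably, the number of support $\tau$-tilting objects for $G_t$) is not a function of the size of $t$ alone --- different unary-binary arbors of the same size give different counts --- so an identity of generating functions obtained by summing over all isomorphism classes of a given size could at best prove that the \emph{totals} agree, not that the two counts agree arbor by arbor. To make a functional-equation argument work you would have to keep the full tree structure as the index, at which point you are back to the termwise recursion whose key step is the unproven identification above.
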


In fact, there should be a more precise relationship. Let us consider
the spherical simplicial complex $SC_t$ which is the polar dual of the
simple polytope attached to $G_t$ by $\tau$-tilting theory.
\begin{conjecture}
  In the simplicial complex $SC_t$, there exists a facet $C$
  such that the $F$-triangle for the pair $(SC_t, C)$ coincides with
  the $F$-triangle associated with the transmuted $M$-triangle of
  $P_t$ by the formulas of \Cref{app:triangles}.
\end{conjecture}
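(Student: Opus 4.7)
My plan is to attack this conjecture by combining the recursive structure of arbors with the known theory of non-kissing complexes for gentle algebras, which should give a combinatorial model for $SC_t$ in terms of $G_t$.

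First I would establish the underlying bijection predicted by \Cref{conjecture-quad}, since without equinumerosity between support $\tau$-tilting objects and elements of $P_t$ there is no hope of matching any refined invariant. The plan is to set up a recursion on the arbor $t$: remove the root vertex of $t$ and decompose $G_t$ as a gluing of the $G_{t_k}$ along the arrows $\alpha_v,\beta_v$ introduced at the old root. In the non-kissing complex model of \cite{ppp_tau}, such a gluing should correspond to a convolution-like operation on faces, matching the recursive description of lattice points of $Q_t$ in terms of the fibers of the projection $\pi$. The bijection should then send a face of the non-kissing complex to a lattice point of $Q_t$ whose coordinate at a vertex $v$ records how much of the string corresponding to $v$ is used.

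Second I would identify the distinguished facet $C$. The natural candidate is the facet corresponding to the set of indecomposable projective modules of the gentle algebra (equivalently, the initial support $\tau$-tilting object, or the ``bottom'' vertex of the polytope from $\tau$-tilting theory); this is the choice that in the $\TA$ and $\TB$ cases reduces to the standard distinguished facet on the cluster complex. With this choice, simplices in the link of $C$ should correspond to ``positive'' strings, and the bivariate $F$-triangle counts faces by splitting into a $C$-part and a non-$C$-part, exactly mirroring the decomposition $X^{\nz(a)}Y^{\haut(a)}$ appearing in $K_t$ from~\eqref{def_Kt}. I would then derive a recursive formula for the $F$-triangle of $(SC_t,C)$ along the same root-removal as above, and compare it directly to the recursion for $K_t$ in \Cref{recurrence_K}; by \eqref{transmuted_M_from_K} and the conversion rules of \Cref{app:triangles}, matching $K_t$ on the two sides is equivalent to matching the $F$-triangle with the one coming from $\overline{M}_t$.

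The main obstacle, in my view, is the identification of $SC_t$ with a non-kissing-type complex in a way that is functorial with respect to the arbor recursion: the gentle algebra $G_t$ depends on orientation choices for the ``non-forced'' edges, and one must show that the resulting face-complex (and in particular its $F$-triangle relative to the projective facet) is independent of these choices. A secondary obstacle is the purely algebraic check that the birational transmutation \eqref{transmutation}, once unfolded through the $F{\leftrightarrow}M$ conversion of \Cref{app:triangles}, exactly reproduces the substitution $K_t\bigl(\tfrac{Y(X-1)}{1-Y},1-Y\bigr)$ of \eqref{transmuted_M_from_K}; this is a computation, but verifying it cleanly is what makes the recursive comparison actually finish the proof. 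I would first test the whole scheme on the families of \Cref{sec:A} and \Cref{sec:B}, where both $SC_t$ and $\overline{M}_t$ are already known, to calibrate the choice of $C$ before attempting the general unary-binary case.
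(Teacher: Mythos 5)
This statement is a conjecture in the paper, not a theorem: the author offers no proof, only computer verification for arbors of size $n \leq 8$ (and, in the analogous Hochschild case, the remark that there appear to be exactly four maximal faces with the desired property, which already shows the distinguished face need not be unique or canonical). Your text is therefore being measured against no existing argument, and on its own it is a research plan rather than a proof. Every load-bearing step is left as an unverified claim: you assume \Cref{conjecture-quad} (itself open in the paper), you assume that removing the root of $t$ induces a decomposition of the support $\tau$-tilting complex (or non-kissing complex) of $G_t$ that mirrors the fibers of the projection $\pi$, and you assume that the facet of indecomposable projectives is a valid choice of $C$. None of these is established, and the middle one is the genuinely hard point: the fibers of $\pi$ are simplices of positive dimension, so the recursion for $K_t$ in \Cref{recurrence_K} involves a nontrivial convolution over root coordinates, and there is no known corresponding ``gluing'' operation on the non-kissing complexes of the glued gentle quivers $G_{t_k}$ that reproduces it face-by-face. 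Asserting that the gluing ``should correspond to a convolution-like operation'' is precisely the conjecture in disguise.

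Two further points. First, your claim that matching $K_t$ is equivalent to matching the $F$-triangle is fine in principle, since \eqref{M_from_K}, \eqref{transmutation} and the conversions \eqref{M_from_F}--\eqref{F_from_M} are invertible birational substitutions; but this only reduces one unproved identity to another, it does not produce either. Second, your proposed calibration on the type $\TA$ and $\TB$ families does not test the relevant difficulty: type $\TB$ arbors are not unary-binary, and for type $\TA$ the complex $SC_t$ is the classical cluster complex, where the whole problem was already solved in \cite{athanasiadis}; the obstruction lies exactly in the general unary-binary case, where the core-label-order/$F$-triangle combinatorics of gentle algebras with relations is not under control. So the proposal identifies reasonable ingredients, but as it stands there is no proof, and the key lemma it would need (a root-removal recursion for the $F$-triangle of $(SC_t, C)$ matching \Cref{recurrence_K}) is missing.
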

This can been checked concretely in small cases. It has been checked
by computer for $n \leq 8$.  

In our example $t$ as above, the poset $P_t$ has $33$ elements. One
can check that the gentle quiver $G_t$ also has $33$ support $\tau$-tilting
objects. The $F$-triangle associated with the transmuted $M$-triangle is
\begin{equation}
  \left(\begin{array}{rrrrr}
1 &  &  &  &  \\
4 & 4 &  &  &  \\
6 & 14 & 8 &  &  \\
4 & 17 & 23 & 10 &  \\
1 & 8 & 22 & 25 & 10
\end{array}\right)
\end{equation}
and it appears as expected for some facet of $SC_t$.

\subsection{Stokes polytopes and quadrangulations}

Let us present briefly an alternative description of the
same situation, replacing the general $\tau$-tilting theory by the
more specific notion of Stokes polytopes and Baryshnikov compatibility
\cite{baryshnikov, stokes}.

Let's consider a regular polygon with $2n + 4$ vertices. Fix a
quadrangulation $Q$ of this polygon, which means a set of $n$
noncrossing segments joining vertices and cutting the polygon into
quadrilateral regions. After Baryhsnikov, one can define a
compatibility relation between quadrangulations. On the set $R_Q$ of
quadrangulations compatible with $Q$, there is a mutation process,
that defines a mutation graph. Moreover, this mutation process is
oriented and defines a partial order. The mutation graph is also the
skeleton of a polytope, called the Stokes polytope for the
quadrangulation \cite{manneville_geo, manneville_serpent,
  manneville_court}.

There is a simple way to map the planar-embedded arbor $t$ into a
quadrangulation. There is also a way to define a gentle quiver from
any quadrangulation. The composite map recovers the definition of
$G_t$ from $t$. By this correspondence, the $\tau$-tilting theory
exactly matches the combinatorial description using mutations of
compatible quadrangulations.

It follows that the conjectural statements above could be rephrased entirely
using the structures (mutation graph, poset, polytope) coming from
this context instead of those coming from $\tau$-tilting theory.


\section{Arbors of type $\TA$}\label{sec:A}

In this section, we consider the case of linear arbors with no
multiple vertex, so that their shape is a simple chain. This is the
intersection of the set of unary-binary arbors considered before with
the set of linear arbors. In order to compute the Zeta polynomial and
the $M$-triangle for the poset $P_t$ in this case, it is convenient to
study a more general familly of posets.

\smallskip

In this section, we will use the following convention: for a product
$\beta_{\ell} = \prod_{r=2}^{\ell} \alpha_r$, the value at $\ell = 1$
is $1$ and the value at $\ell =0$ is $1 / \alpha_1$.

\smallskip

Let $m \geq 1$ be an integer, used as a slope parameter. Let $R_m$ be the region in the plane of coordinates
$(x,y)$ defined by the conditions
\begin{align}
  0 \leq x, \quad 0 \leq y, \quad m y < x.
\end{align}

Let now $(x,y)$ be nonnegative integers such that
\begin{align}
  \label{condition_x_y}
  m y < x.
\end{align}

We consider the set of lattice paths in $R_m$ that go from $(0,0)$ to
the fixed point $(x, y)$, by steps $(+1,0)$ (denoted by $\mX$) or
$(0,+1)$ (denoted by $\mY$). The point $(0,0)$ is the unique allowed
point on the line $my = x$ at the boundary of $R_m$.

\begin{figure}[h]
  \centering
  \includegraphics[height=3cm]{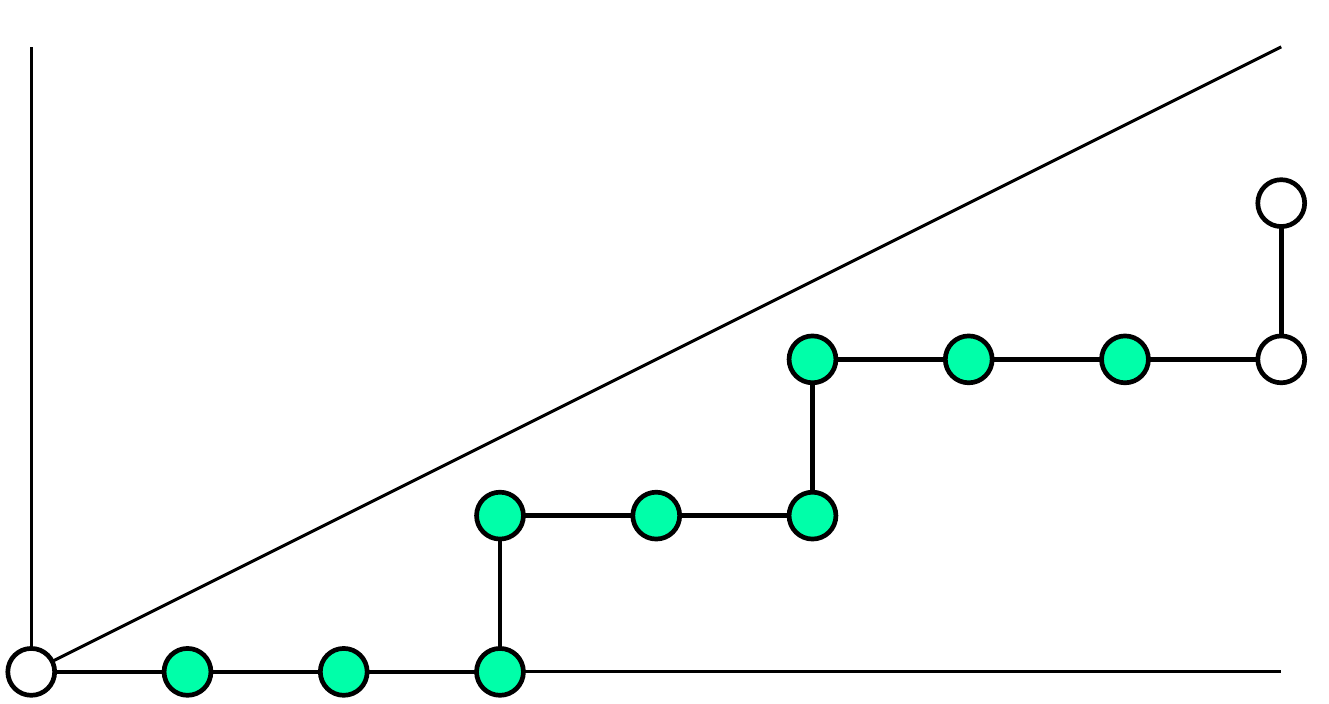}
  \caption{Example of lattice path $\mX^3\mY\mX^2\mY\mX^3\mY$ with $m=2$, $x=8$ and $y=3$.\\The word $A$
    associated with vertical steps is $(0,0,1,0,1,0,0)$.}
\end{figure}

To each lattice path $w$ in $R_m$ from $(0,0)$ to $(x,y)$, one associates a word
$A=(a_1,\ldots,a_{x-1})$, that records the lengths of sequences of consecutive $\mY$
between two letters $\mX$. The letter $a_i$ is the number of $\mY$ between the
$i$-th instance of $\mX$ and the next one. The length of $A$ is $x - 1$.

One can characterise the set of such words $A$
by the following inequalities:
\begin{equation}
  \label{condition_pente}
  m \left(\sum_{i\leq j} a_i \right) < j,
\end{equation}
for $1 \leq j \leq x - 1$ and the condition
\begin{equation}
  \label{condition_fin}
  \sum_i a_i \leq y.
\end{equation}
In particular, every $a_i$ for $i \leq m$ must vanish.

One then defines a poset $\po^F_{m,x,y}$ by term-wise comparison on
this set of words. In this notation, the exponent $F$ stands for Fuss.

The poset $\po^F_{m,x,y}$ is graded by the sum of the $a_i$, that we
will denote by $\haut$.  There is a unique minimum $\bot$, with all
$a_i$ being zero. Every interval $[\bot,{(a_i)}_i]$ is isomorphic to the
product of total orders of size $a_i + 1$.

The special case where $m=1$, $x=n+2$ and $y=n$ recovers the poset
$P_t$ attached to the linear arbor of size $n$ with no multiple
vertex. Indeed, in terms of words $(a_1,\ldots,a_{n+1})$, being
strictly below the diagonal $x=y$ amounts to the conditions
$a_1 \leq 0$, $a_1+a_2 \leq 1$, $\ldots$,
$a_1 + \cdots + a_{n+1} \leq n$. Necessarily $a_1 = 0$, so that one
finds the defining equations of the poset $P_t$ and the same term-wise
comparison.

\begin{lemma}\label{iso_poF}
  Let $y \geq 1$. The posets $\po^F_{m,my+1,y}$ and
  $\po^F_{m,my+1,y-1}$ are isomorphic.
\end{lemma}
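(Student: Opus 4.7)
The plan is to show that $\po^F_{m, my+1, y}$ and $\po^F_{m, my+1, y-1}$ are equal as posets on the nose, the required isomorphism being the identity map. Both consist of words $A = (a_1, \ldots, a_{x-1})$ of the same length $x - 1 = my$, ordered term-wise, so the only thing to check is that their defining systems of inequalities coincide.

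The key observation is that condition \eqref{condition_pente} applied at the maximal index $j = x-1 = my$ yields
\[
m \sum_{i=1}^{my} a_i < my,
\]
and since $\sum_i a_i$ is a nonnegative integer, this strict inequality forces $\sum_i a_i \leq y-1$. Thus in $\po^F_{m, my+1, y}$ the condition \eqref{condition_fin} $\sum_i a_i \leq y$ is redundant: the slope constraint \eqref{condition_pente} already provides the stronger bound $\sum_i a_i \leq y-1$, which is precisely the condition \eqref{condition_fin} imposed in $\po^F_{m, my+1, y-1}$. Consequently the two sets of admissible words coincide, and since both posets carry the same order relation, they are literally equal.

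The main (and essentially only) step is this one application of \eqref{condition_pente} at the terminal index, so there is no serious obstacle. One can sanity-check the statement in the lattice-path picture: every allowed path to $(my+1, y)$ must avoid the forbidden boundary point $(my, y)$ (which lies on the line $my = x$), so its final step is necessarily a $\mY$-step, meaning at least one $\mY$-step is unrecorded in $A$, again giving $\sum_i a_i \leq y-1$.
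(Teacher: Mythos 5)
Your proof is correct and is essentially the paper's argument: the paper removes the forced final $\mY$-step of each lattice path ending at $(my+1,y)$ (forced because $(my,y)$ lies on the excluded boundary line), which is exactly your observation that \eqref{condition_pente} at $j=my$ already forces $\sum_i a_i \leq y-1$ and makes \eqref{condition_fin} redundant. In both versions the isomorphism is the identity on the words $(a_1,\ldots,a_{x-1})$, so the two posets coincide on the nose.
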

\begin{proof}
  For all $m\geq 1$ and for all $y \geq 1$ and $x = m y + 1$, lattice
  paths in the region $R_m$ that ends at $(x,y)$ are in bijection, by
  removal of the last step $\mY$, with lattice paths in the region
  $R_m$ that ends at $(x,y-1)$.  This bijection is the identity on the
  coordinates used to define the partial order.
\end{proof}

\subsection{Zeta polynomial}

\begin{theorem}\label{zeta_A}
  The Zeta polynomial of $\po^F_{m,x,y}$ is given by
  \begin{equation}
    \label{formule_ZF}
    Z^F_{m,x,y}(u) = \frac{\bigl((x-m y -1)(u-1) + 1 \bigr)}{y!} \prod_{j=2}^{y} \bigl((x-1)(u-1)+j \bigr).
  \end{equation}
\end{theorem}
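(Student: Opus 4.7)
The plan is to prove the identity
\[
Z^F_{m,x,y}(u) \;=\; \bigl|\po^F_{m(u-1),\,(x-1)(u-1)+1,\,y}\bigr|
\]
for every integer $u \geq 2$, which reduces the enumeration of multichains in one of these posets to a cardinality count in another poset of the same family. Both sides are polynomials in $u$ of degree $y$ that agree at infinitely many integer values, so polynomial interpolation will yield the claimed formula; the cardinality on the right is in turn a classical Fuss-Catalan count, accessible via the cycle lemma.

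The bijection is the heart of the argument. Given a multichain $e_1 \leq \cdots \leq e_{u-1}$ with $e_k = (a_i^{(k)})_{i=1}^{x-1}$, I would form the matrix of increments $c_{i,k} = a_i^{(k)} - a_i^{(k-1)} \geq 0$ (using $a_i^{(0)} = 0$) and flatten it row by row into a word
\[
A' = (c_{1,1},c_{1,2},\ldots,c_{1,u-1},\,c_{2,1},\ldots,c_{x-1,u-1})
\]
of length $(x-1)(u-1)$. Inversion is obvious: partial row sums recover each $a_i^{(k)}$ and hence the whole chain. The content of the bijection lies in verifying that $A'$ ranges exactly over the lattice points of $\po^F_{m(u-1),(x-1)(u-1)+1,y}$.

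The main obstacle is the constraint check, particularly at the positions of $A'$ lying between ``checkpoints''. Write $\tilde a_i = \sum_k c_{i,k}$. At a checkpoint $p = j(u-1)$, the $\po^F_{m(u-1),\dots}$ inequality rearranges to the chain constraint $m\sum_{i\le j}\tilde a_i < j$, and conversely. At an intermediate position $p = (j-1)(u-1)+k$ with $1 \leq k < u-1$, the relevant partial sum $S$ of $A'$ is bounded by $\sum_{i\le j}\tilde a_i$, and since $m\sum_{i\le j}\tilde a_i$ is an integer strictly smaller than $j$, one obtains $m(u-1) S \leq (j-1)(u-1) < (j-1)(u-1)+k$, so the intermediate inequality holds automatically. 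The height constraint $\sum_i\tilde a_i \leq y$ matches $\sum_p a'_p \leq y$ tautologically.

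Once the bijection is in place, I would invoke the cycle lemma to write $|\po^F_{m',x',y}| = \tfrac{x'-m'y}{x'}\binom{x'+y-1}{y}$ and substitute $m' = m(u-1)$, $x' = (x-1)(u-1)+1$. Expanding $\binom{(x-1)(u-1)+y}{y}$ as $\tfrac{1}{y!}\prod_{j=1}^{y}((x-1)(u-1)+j)$ and cancelling the $j=1$ factor against the denominator $(x-1)(u-1)+1$ leaves exactly $\tfrac{1}{y!}\bigl((x-my-1)(u-1)+1\bigr)\prod_{j=2}^{y}((x-1)(u-1)+j)$, the stated formula. The Fuss-Catalan cardinality of $\po^F$ may deserve to be recorded as a brief preliminary lemma since it does not seem to appear earlier in the text, but its proof via the cycle lemma is completely standard.
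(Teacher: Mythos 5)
Your proof is correct, and it takes a genuinely different route from the paper. The paper proceeds by induction: it peels off the last coordinate of the top element of a multichain to obtain the recurrence $\Zeta^F_{m,x,y}(u)=\sum_{j=0}^{y}\Zeta^F_{m,x-1,y-j}(u)\binom{j+u-2}{j}$, treats the boundary case $x=my+1$ via the isomorphism of \Cref{iso_poF}, and then verifies that the closed form \eqref{formule_ZF} satisfies the same recurrence by Chu--Vandermonde evaluations of ${}_2F_1$'s. You instead encode a multichain of length $u-1$ directly as a single element of the rescaled poset $\po^F_{m(u-1),(x-1)(u-1)+1,y}$ by flattening the increment matrix row by row; the decisive point, which you identify correctly, is that the slope condition at non-checkpoint positions is automatic because $m\sum_{i\le j}\tilde a_i<j$ forces $m\sum_{i\le j}\tilde a_i\le j-1$, whence $m(u-1)S\le (j-1)(u-1)<P$. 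The cycle-lemma count $\frac{x'-m'y}{x'}\binom{x'+y-1}{y}$ with $x'=(x-1)(u-1)+1$, $m'=m(u-1)$ then yields \eqref{formule_ZF} exactly, since $x'-m'y=(x-my-1)(u-1)+1$ cancels the $j=1$ factor; note also that the target poset is legitimately in the family because $m(u-1)y\le (x-1)(u-1)<x'$. Your argument is bijective, avoids both the induction and the hypergeometric manipulations, and explains the product shape of the answer in one stroke (the case $u=2$ already gives the cardinality of $\po^F_{m,x,y}$, which indeed is not stated elsewhere in the paper and should be recorded as a lemma, as you propose); the paper's recursive route, on the other hand, runs in parallel with the recursions used later for the $M$-triangles of the same posets, which is why it is organized that way. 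Two small points to tidy up: the degenerate case $y=0$ agrees with \eqref{formule_ZF} only via the paper's convention $\prod_{r=2}^{0}\alpha_r=1/\alpha_1$, so it deserves one explicit line, and the degree-$y$ claim is unnecessary (and can fail when $x=my+1$) — agreement at all integers $u\ge 2$ already forces the polynomial identity.
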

\begin{proof}
  One first notes that if $x = 1$, then $y = 0$ and the right-hand side
  is the constant polynomial $1$. As the poset
  $\po^F_{m,1,0}$ is just a singleton, the statement holds in that case.
  
  The rest of the proof is by induction based on this initial case and
  the next four lemmas.
\end{proof}

Let us denote $\Zeta^F_{m,x,y}(u)$ the Zeta polynomial of the poset $\po^F_{m,x,y}$.

\begin{lemma}
  For all $m\geq 1$ and for all $y \geq 1$ and $x = m y + 1$, there is a polynomial equality
  \begin{equation}
    \label{eq1_Zeta_F}
    \Zeta^F_{m,x,y}(u) = \Zeta^F_{m,x,y-1}(u).
  \end{equation}
\end{lemma}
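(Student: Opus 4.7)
The plan is very short: this is an immediate corollary of \Cref{iso_poF}. That lemma has already established that the posets $\po^F_{m,my+1,y}$ and $\po^F_{m,my+1,y-1}$ are isomorphic, via the map that removes the last $\mY$ step from any lattice path ending at $(my+1,y)$ (this works because the only way to reach $(my+1,y)$ while staying in $R_m$ is to take a vertical step at the very end, since $(my+1,y-1)$ is the unique lattice point on the line just to the right of the boundary line $x=my$). Crucially, this bijection is the identity on the coordinates $(a_1,\ldots,a_{x-1})$ used to define the term-wise order, so it is a genuine poset isomorphism.

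The Zeta polynomial is, by definition, the polynomial in $u$ whose value at every integer $m \geq 2$ counts multichains $e_1 \leq \cdots \leq e_{m-1}$ in the poset. It is therefore a poset invariant: any isomorphism of posets induces a bijection between multichains of each length and hence preserves the Zeta polynomial. Applying this to the isomorphism from \Cref{iso_poF} yields the desired identity
\[
\Zeta^F_{m,my+1,y}(u) = \Zeta^F_{m,my+1,y-1}(u)
\]
as polynomials in $u$.

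There is no real obstacle here; the only thing one should verify is that the bijection described in the proof of \Cref{iso_poF} is indeed an isomorphism of ordered sets and not merely of underlying sets. This is clear because the partial order on $\po^F_{m,x,y}$ is defined by term-wise comparison of the associated word $(a_1,\ldots,a_{x-1})$, and the bijection acts by forgetting a final trailing $\mY$-step, an operation which does not alter any of the $a_i$. Once this point is noted, no further calculation is required and the statement of the lemma follows.
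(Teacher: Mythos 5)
Your proposal is correct and matches the paper's own proof, which likewise deduces the equality immediately from the poset isomorphism of \Cref{iso_poF}; your extra remarks on why that bijection preserves the term-wise order and why Zeta polynomials are poset invariants are just a more explicit spelling-out of the same argument.
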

\begin{proof}
  By \Cref{iso_poF}, the posets $\po^F_{m,x,y}$ and $\po^F_{m,x,y-1}$ are isomorphic.
\end{proof}

\begin{lemma}
  For all $m\geq 1$ and for all $x,y$ satisfying~\eqref{condition_x_y} with $x > m y + 1$, there is a polynomial equality
  \begin{equation}
    \label{eq2_Zeta_F}
    \Zeta^F_{m,x,y}(u) = \sum_{j=0}^{y} \Zeta^F_{m,x-1,y-j}(u) \binom{j-2+u}{j}.
    \end{equation}
  \end{lemma}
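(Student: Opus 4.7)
The plan is to prove the identity combinatorially. Both sides of \eqref{eq2_Zeta_F} are polynomials in $u$, so it suffices to verify the equality at every integer $u \geq 2$, where $\Zeta^F_{m,x,y}(u)$ enumerates multichains $e_1 \leq e_2 \leq \cdots \leq e_{u-1}$ in $\po^F_{m,x,y}$. I will set up a bijection between such multichains and pairs consisting of a multichain in $\po^F_{m,x-1,y-j}$ and a weakly increasing sequence of ``last coordinates'' of length $u-2$ taking values in $[0,j]$, then sum over $j$.

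Given a multichain, write $e_k = (a_{k,1},\ldots,a_{k,x-1})$, let $b_k = a_{k,x-1}$, and set $j = b_{u-1} \in [0,y]$. I associate to it the projection $\tilde{e}_k = (a_{k,1},\ldots,a_{k,x-2})$ together with the weakly increasing sequence $0 \leq b_1 \leq \cdots \leq b_{u-1} = j$. The projected tuples satisfy the slope conditions defining $\po^F_{m,x-1,y-j}$ because the first $x-2$ slope constraints are unchanged; moreover $\sum_i \tilde{a}_{u-1,i} = \bigl(\sum_i a_{u-1,i}\bigr) - j \leq y - j$, and by coordinate-wise domination the same bound holds for every $\tilde{e}_k$, so the projection is a multichain in $\po^F_{m,x-1,y-j}$.

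For the inverse, given a multichain $\tilde{e}_1 \leq \cdots \leq \tilde{e}_{u-1}$ in $\po^F_{m,x-1,y-j}$ and a weakly increasing sequence $0 \leq b_1 \leq \cdots \leq b_{u-2} \leq j$ (with $b_{u-1} := j$), I append $b_k$ to $\tilde{e}_k$ to recover $e_k$. Coordinate-wise order is preserved, and the slope conditions at indices $\leq x-2$ are inherited. For the new index $x-1$ the required inequality $m \sum_{i=1}^{x-1} a_{k,i} < x-1$ follows from $\sum_{i=1}^{x-2} \tilde{a}_{k,i} + b_k \leq (y-j) + j = y$ together with the hypothesis $m y < x - 1$; the same computation yields $\sum_i a_{k,i} \leq y$. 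This is the only place where the assumption $x > m y + 1$ is used, and it is the key observation making the reconstruction well-defined — the main (if minor) obstacle to verify.

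Finally, the number of weakly increasing sequences $0 \leq b_1 \leq \cdots \leq b_{u-2} \leq j$ is the standard multiset count $\binom{j+u-2}{u-2} = \binom{j+u-2}{j}$. Summing over $j = 0, 1, \ldots, y$ then produces exactly the recurrence~\eqref{eq2_Zeta_F}.
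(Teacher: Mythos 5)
Your proposal is correct and follows essentially the same route as the paper: evaluate at integers $u\geq 2$, split off the last coordinate, use the top element of the chain to fix $j$, project to a multichain in $\po^F_{m,x-1,y-j}$, and count the remaining weakly increasing sequence of last coordinates by $\binom{j+u-2}{j}$. Your explicit check of the slope condition at index $x-1$ (the only place $x>my+1$ enters) matches what the paper leaves implicit.
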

\begin{proof}
  By polynomiality of Zeta polynomials with respect to $u$, it
  suffices to check the formula when $u$ is replaced by any integer
  $q \geq 2$. We will use a bijection between chains.

  Let us assume $q \geq 2$ and consider a chain
  $A_1 \leq A_2 \leq \cdots \leq A_{q-1}$ in $\po^F_{m,x,y}$. We
  denote by $a_{\ell,i}$ for $1 \leq i \leq x-1$ the coordinates of the
  lattice path $A_\ell$. Note that the chain condition implies that
  $\sum_{i\leq k} a_{\ell,i}$ is growing with $\ell$ for every fixed $k$.

  Let $j = a_{q-1,x-1}$. The possible values of $j$ are all integers between $0$ and $y$.

  For $1 \leq \ell \leq q-1$, we define $\widehat{A}_\ell$ by forgetting
  the coordinate $a_{\ell, x-1}$ in $A_\ell$. Then every
  $\widehat{A}_\ell$ defines an element in $\po^F_{m,x-1,y-j}$.
  Indeed, the conditions \eqref{condition_pente} remain trivially true
  and the condition \eqref{condition_fin} is implied by
  the definition of $j$ and the growth of $\sum_{i<x-1} a_{\ell,i}$ as a function of
  $\ell$:
  \begin{equation*}
    \sum_{i < x-1} a_{\ell,i} + j \leq \sum_{i < x-1} a_{q-1,i} + a_{q-1,x-1} \leq y.
  \end{equation*}
  We therefore get a chain
  $\widehat{A}_1 \leq \widehat{A}_1 \leq \widehat{A}_2 \cdots \leq
  \widehat{A}_{q-1}$ in $\po^F_{m,x-1,y-j}$.

  To build back the original chain $A_1 \leq A_2 \leq \cdots \leq A_{q-1}$
  from this truncated chain, one has to choose appropriate
  values $a_{\ell,x-1}$ for $1 \leq \ell \leq q-1$.
  
  To recover $A_{q-1}$ in $\po^F_{m,x,y}$,
  one can only choose  $a_{q-1,x-1} = j$. For this choice, the
  conditions \eqref{condition_pente} and \eqref{condition_fin} hold
  for $A_{q-1}$.

  Now choose the other integers $a_{\ell,x-1}$ such that
  $0 \leq a_{1,x-1} \leq \cdots \leq a_{q-1,x-1} = j$.
  Then the conditions \eqref{condition_pente} and
  \eqref{condition_fin} for $A_\ell$ result from those for
  $A_{q-1}$. So we have indeed built back a chain in $\po^F_{m,x,y}$.

  For each fixed $j$, the number of possible choices for the whole
  increasing sequence of integers $a_{\ell,x-1}$
  ($1 \leq \ell < q - 1$) is therefore a binomial coefficient
  $\binom{j+q-2}{j}$.
\end{proof}

\begin{lemma}
  For all $m\geq 1$ and for all $y \geq 1$ and $x = m y + 1$,
  there is a polynomial equality
  \begin{equation}
    Z^F_{m,x,y}(u) = Z^F_{m,x,y-1}(u).
  \end{equation}
\end{lemma}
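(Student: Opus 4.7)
The plan is a direct algebraic verification of the claimed identity between the two rational expressions given by the formula \eqref{formule_ZF}. No induction or combinatorial bijection is needed: once $x$ is fixed to $my+1$, the identity is a polynomial identity in $u$ and $m$ which follows from a one-line telescoping.

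Substituting $x=my+1$ into \eqref{formule_ZF}, the prefactor $(x-my-1)(u-1)+1$ on the left-hand side collapses to $1$, while on the right-hand side the prefactor $(x-m(y-1)-1)(u-1)+1$ becomes $m(u-1)+1$. In both products, each factor $(x-1)(u-1)+j$ becomes $my(u-1)+j$. So the identity to verify reduces to
\begin{equation*}
  \frac{1}{y!} \prod_{j=2}^{y}\bigl(my(u-1)+j\bigr) \;=\; \frac{m(u-1)+1}{(y-1)!} \prod_{j=2}^{y-1}\bigl(my(u-1)+j\bigr).
\end{equation*}

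The key step is to pull the $j=y$ factor out of the LHS product: it equals $my(u-1)+y = y\bigl(m(u-1)+1\bigr)$. The prefactor $y$ combines with $1/y!$ to give $1/(y-1)!$, and the remaining $m(u-1)+1$ matches the extra factor on the RHS, producing precisely the right-hand side. This is the only real content of the proof.

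The only point needing care is the edge case $y=1$, where the formula for $Z^F_{m,x,y-1}$ involves the empty/negative-indexed product $\prod_{j=2}^{0}$. Using the conventions declared at the start of the section (value $1$ at $\ell=1$ and $1/\alpha_1$ at $\ell=0$, with $\alpha_j=(x-1)(u-1)+j$), one checks directly that both sides equal $1$, so the identity holds in this case as well. I do not anticipate any obstacle; the lemma is essentially a bookkeeping check that the explicit formula in \Cref{zeta_A} is internally consistent with the isomorphism $\po^F_{m,my+1,y} \simeq \po^F_{m,my+1,y-1}$ established in \Cref{iso_poF}.
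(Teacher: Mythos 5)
Your verification is correct, and it is exactly the computation the paper has in mind: the paper's proof of this lemma is just "an easy computation using \eqref{formule_ZF}", and your telescoping of the $j=y$ factor $my(u-1)+y=y\bigl(m(u-1)+1\bigr)$, together with the check of the $y=1$ case via the stated product conventions, supplies precisely those omitted details.
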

\begin{proof}
  This is an easy computation using \eqref{formule_ZF}. 
\end{proof}

The following lemma is a computation using hypergeometric functions.

\begin{lemma}
  For all $m\geq 1$ and for all $x,y$ satisfying
  \eqref{condition_x_y} with $x \geq m y + 1$,
  there is a polynomial equality
  \begin{equation}
    \label{eq_ZF}
    Z^F_{m,x,y}(u) = \sum_{j=0}^{y} Z^F_{m,x-1,y-j}(u) \binom{j+u-2}{j}.
  \end{equation}
\end{lemma}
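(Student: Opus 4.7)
The case $x > my + 1$ is literally the identity \eqref{eq2_Zeta_F}, so the only new content is the boundary case $x = my + 1$. For this case the bijective argument of the previous lemma cannot be imitated directly, because deleting the coordinate $a_{\ell, x-1}$ from a path would produce a path ending at $(my, y)$, which lies on the forbidden boundary $my = x$ of $R_m$ and so falls outside $\po^F_{m,my,y}$.

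My plan is to reduce the statement to an auxiliary hypergeometric identity that I can verify from the closed form \eqref{formule_ZF}. First, use the identity $Z^F_{m,my+1,y}(u) = Z^F_{m,my+1,y-1}(u)$ from the preceding lemma. Since $my + 1 > m(y-1) + 1$, equation \eqref{eq2_Zeta_F} applies to the right-hand side, giving
\[
Z^F_{m,my+1,y}(u) = \sum_{j=0}^{y-1} Z^F_{m,my,y-1-j}(u)\binom{j+u-2}{j}.
\]
Shifting the summation index by one and using Pascal's rule $\binom{j+u-2}{j} = \binom{j+u-3}{j-1} + \binom{j+u-3}{j}$, comparing with the target sum reduces the task to proving the auxiliary identity
\[
\sum_{j=0}^{y} Z^F_{m,my,y-j}(u)\binom{j+u-3}{j} = 0. \qquad (\dagger)
\]

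To prove $(\dagger)$, set $v = u-1$ and $a = (my-1)v + 1$, and use the rising-factorial notation $a^{\overline{k}} = a(a+1)\cdots(a+k-1)$. The closed form \eqref{formule_ZF} together with the convention on the empty product rewrites as
\[
Z^F_{m,my,y-j}(u) = \frac{(mj-1)v + 1}{(y-j)!} \cdot \frac{a^{\overline{y-j}}}{a}, \qquad \binom{j+u-3}{j} = \frac{(v-1)^{\overline{j}}}{j!}.
\]
Splitting $(mj-1)v + 1 = mjv - (v-1)$ breaks $(\dagger)$ into two sums, each of the Vandermonde--Chu shape $\sum_j \alpha^{\overline{y-j}} \beta^{\overline{j}}/((y-j)!\, j!)$ after writing $(v-1)^{\overline{j}} = (v-1)\, v^{\overline{j-1}}$ in the $mjv$ piece to absorb the factor $j$. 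Applying the Vandermonde--Chu convolution $\sum_{j=0}^{n} \alpha^{\overline{n-j}} \beta^{\overline{j}}/((n-j)!\, j!) = (\alpha+\beta)^{\overline{n}}/n!$ to both and then extracting the common factor $(v-1)\,(a+v)^{\overline{y-1}}/(a \cdot y!)$ leaves precisely $mvy - (a + v - 1)$, which vanishes because $a + v - 1 = (my-1)v + v = myv$.

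The main obstacle is the hypergeometric manipulation in the third paragraph: one needs to recognise that the split $(mj-1)v + 1 = mjv - (v-1)$ produces two Vandermonde--Chu-summable pieces, and then track the rising factorials carefully through the cancellation. The bookkeeping of paragraph two (Pascal's rule plus reindexing) is routine, and the final identity $mvy = a + v - 1$ is precisely the arithmetic form of the boundary condition $x = my + 1$, so the whole computation may be viewed as a hypergeometric reflection of that hypothesis.
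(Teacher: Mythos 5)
There is a genuine gap, and it is located in your very first sentence. The lemma you are asked to prove is a statement about the explicit product formula $Z^F_{m,x,y}$ defined by \eqref{formule_ZF}, whereas \eqref{eq2_Zeta_F} is a statement about $\Zeta^F_{m,x,y}$, the Zeta polynomial of the poset $\po^F_{m,x,y}$. These are different objects at this stage: the whole point of the four lemmas is that $\Zeta^F$ and $Z^F$ satisfy the same initial condition and the same two recurrences, so that the induction in \Cref{zeta_A} can conclude they are equal. Declaring that the case $x>my+1$ of \eqref{eq_ZF} ``is literally'' \eqref{eq2_Zeta_F} silently substitutes $Z^F=\Zeta^F$, which is exactly the conclusion of \Cref{zeta_A}, whose proof depends on the present lemma --- so the argument is circular. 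As a result the generic case $x>my+1$, which carries essentially all of the computational content (the paper handles it by inserting \eqref{formule_ZF} into the right-hand side, splitting into two sums, and evaluating two ${}_2F_1$'s by Chu--Vandermonde), is never proved in your write-up. Worse, your treatment of the boundary case also relies on it: the step ``\eqref{eq2_Zeta_F} applies to the right-hand side'' is really the identity \eqref{eq_ZF} for $Z^F$ at the parameters $(my+1,\,y-1)$, i.e.\ an instance of the unproved generic case, not of the poset-level lemma.

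What you did prove is correct and would be useful in a repaired argument: the reduction of the boundary case $x=my+1$ to the generic case via the third lemma, Pascal's rule and reindexing is sound, and your auxiliary identity $(\dagger)$ is a valid Vandermonde--Chu computation (the cancellation indeed comes from $a+v-1=myv$, the arithmetic form of $x=my+1$). But this only derives the boundary case from the generic one; to obtain the lemma you must still verify \eqref{eq_ZF} for $x>my+1$ directly from \eqref{formule_ZF}, which is precisely the hypergeometric computation the paper carries out (and which, as it happens, works uniformly in $x$, making the boundary detour unnecessary). As written, your proposal proves strictly less than the stated lemma.
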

\begin{proof}
  In the right hand side of \eqref{eq_ZF}, let us use the formula
  \eqref{formule_ZF}. One finds
  \begin{equation*}
    \sum_{j=0}^{y} \binom{j+u-2}{j} \frac{\bigl((x-m (y-j) -2)(u-1) + 1 \bigr)}{(y-j)!} \prod_{\ell=2}^{y-j} \bigl((x-2)(u-1)+\ell \bigr).
  \end{equation*}

  Using the shorthands $\bu=u-1$ and $\bx=x-2$, this becomes
  \begin{equation*}
    \sum_{j=0}^{y} \binom{j+\bu-1}{j} \frac{\bigl((\bx -m (y-j)) \bu + 1 \bigr)}{(y-j)!} \prod_{\ell=2}^{y-j} \bigl( \bx \bu+\ell \bigr).
  \end{equation*}

  One can separate into two sums:
  \begin{equation*}
    \sum_{j=0}^{y} \binom{j+\bu-1}{j} \frac{1}{(y-j)!} \prod_{\ell=1}^{y-j} \bigl( \bx \bu+\ell \bigr) -m \bu   \sum_{j=0}^{y} \binom{j+\bu-1}{j} \frac{1}{(y-j-1)!} \prod_{\ell=2}^{y-j} \bigl( \bx \bu+\ell \bigr).
  \end{equation*}

  In each of these sums, one can replace the summation upper bound $y$
  by $+\infty$ and get an hypergeometric expression:
  \begin{equation*}
    \frac{1}{y!}\prod_{\ell=1}^{y} \bigl( \bx \bu+\ell \bigr) \cdot \pFq{2}{1}{\bu,-y}{-y-\bx\bu} - \frac{m \bu}{(y-1)!} \prod_{\ell=2}^{y} \bigl( \bx \bu+\ell \bigr) \cdot \pFq{2}{1}{\bu,-y+1}{-y-\bx\bu}.
  \end{equation*}

  Then using the Chu-Vandermonde formula for each term, one gets
  \begin{equation*}
    \frac{1}{y!}\prod_{\ell=1}^{y} \bigl( \bx \bu+\ell \bigr) \frac{{(-y-\bx\bu-\bu)}_y}{{(-y-\bx\bu)}_y} - \frac{m \bu}{(y-1)!} \prod_{\ell=2}^{y} \bigl( \bx \bu+\ell \bigr) \frac{{(-y-\bx\bu-\bu)}_{y-1}}{{(-y-\bx\bu)}_{y-1}},
  \end{equation*}
  where one has used some Pochhammer symbols. This can be simplified as
  \begin{equation*}
    \frac{1}{y!}\prod_{\ell=1}^{y} \bigl( \bx \bu+\bu +\ell \bigr) - \frac{m \bu}{(y-1)!} \prod_{\ell=2}^{y} \bigl( \bx \bu+\bu +\ell \bigr).
  \end{equation*}
  which is the expected formula \eqref{formule_ZF}. 
\end{proof}

Let $t_n$ be the linear arbor with $n$ vertices and no multiple
vertex. It follows from the general \Cref{zeta_A} at $m=1, x=n+2, y=n$
that the Zeta polynomial of the poset $P_{t_n}$ for the arbor $t_n$ is
the same as the known Zeta polynomial of the noncrossing-partitions
lattice, first computed by Edelman~\cite{edelman}, namely
\begin{equation*}
  \frac{1}{n!} \prod_{j=0}^{n-1} \bigl((n+1) u - j\bigr).
\end{equation*}

\subsection{$M$-triangle}

Here we will describe the $M$-triangles of the posets $\po^F_{m,x,y}$.

Because all intervals between the unique minimum and any element are
products of total orders, and by the same reasoning as in
\Cref{subsec:M}, the $M$-triangle is the sum
\begin{equation}
  \label{M_and_K_for_B}
  \sum_{p \in \po^F(m,x,y)} {(XY)}^{\haut(a)} {(1-1/X)}^{\nz(a)},
\end{equation}
where $\haut(a)$ is the sum of coordinates and $\nz(a)$ is the number of non-zero coordinates.

\begin{lemma}
  Assume that $y >0$ and $x > 1+my$. Then
  \begin{equation}
    \label{recurrence_M_A}
    M_{m,x,y} = M_{m,x-1,y} + (1-1/X) \sum_{j=1}^{y} {(XY)}^j M_{m,x-1,y-j}.
  \end{equation}
\end{lemma}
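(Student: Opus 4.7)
The plan is to start from the formula~\eqref{M_and_K_for_B} for the $M$-triangle and to split the sum over $a=(a_1,\ldots,a_{x-1})\in\po^F_{m,x,y}$ according to the value of the last coordinate $a_{x-1}$. Because the $X,Y$-weight in~\eqref{M_and_K_for_B} is multiplicative in $\haut$ and $\nz$, and each of these statistics decomposes as a contribution from the last coordinate plus a contribution from the truncated word $\widehat{a}=(a_1,\ldots,a_{x-2})$, this decomposition should read the two terms of~\eqref{recurrence_M_A} off directly.

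The key step is a bijection, for each fixed $j$ with $0\leq j\leq y$, between the set of $a\in\po^F_{m,x,y}$ with $a_{x-1}=j$ and the poset $\po^F_{m,x-1,y-j}$, given by $a\mapsto\widehat{a}$. I would first check that $\widehat{a}$ lies in $\po^F_{m,x-1,y-j}$: the inequalities~\eqref{condition_pente} for $\widehat{a}$ are exactly the first $x-2$ of those for $a$, and~\eqref{condition_fin} for $\widehat{a}$ follows from $\sum_{i<x-1} a_i = \haut(a)-j\leq y-j$. Conversely, given $\widehat{a}\in\po^F_{m,x-1,y-j}$, appending $j$ as the last coordinate produces $a\in\po^F_{m,x,y}$: the new instance of~\eqref{condition_pente} at $j=x-1$ is $m\,\haut(a)<x-1$, which is implied by $\haut(a)\leq y$ together with the hypothesis $x>my+1$, and~\eqref{condition_fin} holds since $\haut(a)=\haut(\widehat{a})+j\leq(y-j)+j=y$. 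This is the only place where the hypothesis $x>my+1$ is used, and it is also what guarantees that $\po^F_{m,x-1,y-j}$ is a legitimate poset, i.e.\ that $x-1>m(y-j)$.

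Under this bijection, $\haut(a)=\haut(\widehat{a})+j$ and $\nz(a)=\nz(\widehat{a})+[j\geq 1]$. Plugging this into~\eqref{M_and_K_for_B} and summing over $j$ yields
\begin{equation*}
  M_{m,x,y} \;=\; \sum_{\widehat{a}\in\po^F_{m,x-1,y}} (XY)^{\haut(\widehat{a})}(1-1/X)^{\nz(\widehat{a})}
  \;+\; \sum_{j=1}^{y} (XY)^{j}(1-1/X)\sum_{\widehat{a}\in\po^F_{m,x-1,y-j}} (XY)^{\haut(\widehat{a})}(1-1/X)^{\nz(\widehat{a})},
\end{equation*}
which is exactly~\eqref{recurrence_M_A} after recognising each inner sum as the appropriate $M_{m,x-1,y-j}$.

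The main obstacle, minor as it is, is the verification that the range of $j$ in the recurrence is correct, i.e.\ that $j$ really runs up to $y$ and not further. This is ensured by~\eqref{condition_fin}, since $a_{x-1}\leq\haut(a)\leq y$. One should also remark that the hypothesis $y>0$ is only needed to make the second sum nonempty; otherwise the decomposition reduces to the trivial identity $M_{m,x,0}=M_{m,x-1,0}$ (both posets being singletons, provided $x>1$).
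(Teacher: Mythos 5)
Your proof is correct and follows essentially the same route as the paper: the paper's bijection (erasing the last $\mX$ step together with the $a_{x-1}$ preceding $\mY$ steps) is exactly your truncation $a\mapsto\widehat{a}$ split according to $j=a_{x-1}$, with the same bookkeeping $\haut(a)=\haut(\widehat{a})+j$, $\nz(a)=\nz(\widehat{a})+[j\geq 1]$ plugged into~\eqref{M_and_K_for_B}. Your verification of where $x>1+my$ is needed is in fact slightly more explicit than the paper's.
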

\begin{proof}
  Let us define a bijection $\varphi$ from $\po^F_{m,x,y}$ to the
  disjoint union
  \begin{equation*}
    \bigsqcup_{j=0}^{y} \po^F_{m,x-1,y-j}.
  \end{equation*}
  The map $\varphi$ is defined by deleting the subword
  consisting of the $a_{x-1}$ steps $\mY$ just before the last $\mX$
  step, and also removing that $\mX$ step. Let $j$ denote
  $a_{x-1}$. If $j=0$, then one obtains a path in $\po^F_{m,x-1,y}$
  because $x > 1+my$. Otherwise, $j$ can take any value between $1$
  and $y$, and then the path obtained after deletion belongs to
  $\po^F_{m,x-1,y-j}$. All this clearly defines a bijection $\varphi$,
  as one can insert back the removed subword, depending on $j$, just
  after the last $\mX$.

  If $j=0$, the bijection $\varphi$ preserves both the height $\haut$
  and $\nz$. Otherwise, the height $\haut$ is decreased by $j$ and the
  number $\nz$ is decreased by $1$. This gives the expected formula, using \eqref{M_and_K_for_B}.
\end{proof}

Together with the initial conditions
\begin{equation}
  M_{m,x,0} = 1
\end{equation}
because there is only one path in $\po^F_{m,x,0}$ and
\begin{equation}
  M_{m,my+1,y} = M_{m,my+1,y-1}
\end{equation}
because of the isomorphism of \Cref{iso_poF}, this determines everything
by induction.

\begin{proposition}
  The $M$-triangle of $\po^F_{m,x,y}$ is the sum of
  \begin{equation}
    \frac{1}{\ell!}  \binom{\ell}{k} \left( x-1-m\ell \right)\prod_{r=2}^{\ell}(x-r+k) {(-X)}^k {(-Y)}^{\ell},
  \end{equation}
  for $0 \leq k \leq \ell \leq y$.
\end{proposition}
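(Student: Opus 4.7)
Let me denote the claimed formula by $\widetilde{M}_{m,x,y}(X,Y)$ and write
\[
c_{k,\ell}(x) = \frac{1}{\ell!}\binom{\ell}{k}(x-1-m\ell)\prod_{r=2}^{\ell}(x-r+k),
\]
so that $\widetilde{M}_{m,x,y} = \sum_{0 \le k \le \ell \le y} c_{k,\ell}(x)(-X)^k(-Y)^\ell$. The plan is to prove $\widetilde{M}_{m,x,y} = M_{m,x,y}$ by induction on $x+y$, showing that $\widetilde{M}$ satisfies the same three defining relations as $M$ noted just before this statement:
(a) the base case $M_{m,x,0}=1$,
(b) the isomorphism identity $M_{m,my+1,y} = M_{m,my+1,y-1}$ for $y \ge 1$, coming from \Cref{iso_poF},
(c) the main recurrence \eqref{recurrence_M_A} for $y \ge 1$ and $x > my+1$.
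These three together determine the $M$-triangle uniquely, because for $x > my+1$ the recurrence reduces to strictly smaller $(x',y')$ (both $(x-1,y)$ and $(x-1,y-j)$ still satisfy $m y' < x'$), and for $x = my+1$ the isomorphism reduces $y$ by one.

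Verification of (a) is immediate: only the $\ell = k = 0$ term survives, and the convention $\prod_{r=2}^{0}\alpha_r = 1/\alpha_1$ gives $c_{0,0}(x) = (x-1)\cdot\frac{1}{x-1} = 1$. Verification of (b) is equally transparent: at $x = my+1$, one has $x-1-m\ell = m(y-\ell)$, which vanishes precisely when $\ell = y$, so every $\ell = y$ term drops out of the sum and the remaining sum is exactly $\widetilde{M}_{m,my+1,y-1}$.

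Verification of (c) is the main calculation. I would extract the coefficient of $(-X)^k(-Y)^\ell$ on both sides, for each $0 \le k \le \ell \le y$. On the left, this is $c_{k,\ell}(x) - c_{k,\ell}(x-1)$. On the right, using $(1-1/X)(-X)^a = (-X)^a + (-X)^{a-1}$ for $a \ge 1$ and $(XY)^j(-X)^a(-Y)^b = (-X)^{a+j}(-Y)^{b+j}$, then reindexing by $a = k-j$ (respectively $a = k-j+1$) and $b = \ell-j$, the coefficient becomes a sum of two finite sums in $j$ of the form $\sum_j c_{k-j,\ell-j}(x-1)$ and $-\sum_j c_{k-j+1,\ell-j}(x-1)$ with appropriate bounds coming from $a,b \ge 0$ and $b \le y-j$.

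The required identity then reads
\[
c_{k,\ell}(x) - c_{k,\ell}(x-1) = \sum_{j \ge 1} c_{k-j,\ell-j}(x-1) - \sum_{j \ge 1} c_{k-j+1,\ell-j}(x-1),
\]
with the obvious vanishing conventions. Substituting the explicit formula for $c_{k,\ell}$ and factoring out the common $(x-r+\cdot)$-products, this reduces to a hypergeometric summation of type ${}_2F_1$ at argument $1$, which I expect to be resolvable by Chu--Vandermonde exactly as in the proof of \Cref{zeta_A}. The main obstacle is precisely this step: careful bookkeeping of the index ranges for $j$ (in particular the boundary $j=k+1$ where the second sum truncates and the edge $k=\ell$ where the second sum is empty) and matching of the rising-factorial factors $\prod_{r=2}^{\ell}(x-r+k)$ appearing with different values of $k$ and $\ell$. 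Once this identity is verified, the induction closes and $\widetilde{M}_{m,x,y} = M_{m,x,y}$.
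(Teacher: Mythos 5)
Your overall plan coincides with the paper's proof: induct using the two initial cases ($M_{m,x,0}=1$ and the identity $M_{m,my+1,y}=M_{m,my+1,y-1}$ coming from \Cref{iso_poF}) together with the recurrence \eqref{recurrence_M_A}, and your verification of the two initial cases is correct and in fact more explicit than the paper's ``by inspection''. The gap is in your step (c), which is where essentially all of the work in the paper's proof lies, and your sketch of it would not go through as written. First, there is a sign error: by your own (correct) expansion $(1-1/X)(-X)^a=(-X)^a+(-X)^{a-1}$, both reindexed families of terms enter with a plus sign, so the identity to verify is
\[
c_{k,\ell}(x)-c_{k,\ell}(x-1)\;=\;\sum_{j\ge 1}c_{k-j,\ell-j}(x-1)\;+\;\sum_{j\ge 1}c_{k-j+1,\ell-j}(x-1),
\]
not with a minus in front of the second sum; already for $(k,\ell)=(0,1)$ the left-hand side equals $1$ while your version of the right-hand side gives $-1$. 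Second, your claim that the remaining summation ``reduces to a ${}_2F_1$ at argument $1$, resolvable by Chu--Vandermonde exactly as in the proof of \Cref{zeta_A}'' underestimates what is needed: the summand $c_{k-j,\ell-j}(x-1)$ carries the factor $x-2-m(\ell-j)$, which is linear in the summation index $j$, so the sums over $j$ are not plain Chu--Vandermonde sums. This is exactly why the paper splits each coefficient into its part constant in $m$, handled by \Cref{summing_1} (a genuine ${}_2F_1$/Chu--Vandermonde evaluation), and its part linear in $m$, handled by \Cref{summing_2} (a ${}_3F_2$ that must first be transformed by a contiguity-type relation), and then recombines the two contributions coming from the two sums. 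So the decisive computation is absent from your proposal, and the route you indicate for it is both sign-flipped and too weak; with the sign corrected and the two summation lemmas (or equivalent identities) supplied, your induction closes exactly as you outline and reproduces the paper's argument.
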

\begin{proof}
  By induction. The statement holds in the two initial cases by
  inspection. 

  Let us expand the right-hand side of \eqref{recurrence_M_A} as the sum of two sums, with the leftmost term of \eqref{recurrence_M_A} being incorporated into the first sum:
  \begin{equation*}
    \sum_{j=0}^y \sum_{0 \leq k \leq \ell \leq y-j} \frac{1}{\ell!}\binom{\ell}{k} (x-2-m \ell) \prod_{r=2}^{\ell} (x-r-1+k) {(-X)}^{k+j} {(-Y)}^{\ell+j}
  \end{equation*}
  and
  \begin{equation*}
    \sum_{j=1}^y \sum_{0 \leq k \leq \ell \leq y-j} \frac{1}{\ell!}\binom{\ell}{k} (x-2-m \ell) \prod_{r=2}^{\ell} (x-r-1+k) {(-X)}^{k+j-1} {(-Y)}^{\ell+j}.
  \end{equation*}
  After a careful manipulation of sums, one obtains
  \begin{equation*}
    \sum_{0 \leq k \leq \ell \leq y} \sum_{j=0}^k \frac{1}{(\ell-j)!}\binom{\ell-j}{k-j}
    (x-2-m(\ell-j)) \prod_{r=2}^{\ell-j} (x-r-1+k-j){(-X)}^k {(-Y)}^{\ell}  
  \end{equation*}
  and
  \begin{equation*}
    \sum_{0 \leq k < \ell \leq y} \sum_{j=1}^{k+1} \frac{1}{(\ell-j)!}\binom{\ell-j}{k-j+1}
    (x-2-m(\ell-j)) \prod_{r=2}^{\ell-j} (x-r+k-j){(-X)}^k {(-Y)}^{\ell}.  
  \end{equation*}
  We will now concentrate on the individual coefficients of
  ${(-X)}^k {(-Y)}^{\ell}$ in these two sums.  Each coefficient is an
  affine function with respect to $m$. Let us first handle the
  constant part w.r.t. $m$. Using twice \Cref{summing_1}, one gets the two
  contributions
  \begin{equation*}
    \frac{1}{k! (\ell-k)!} \prod_{r=1}^\ell (x-1-r+k)
    \quad\text{and}\quad
    \frac{1}{k! (\ell-k-1)!} \prod_{r=1}^{\ell-1} (x-1-r+k).
  \end{equation*}
  Summing both terms, one gets as expected
  \begin{equation*}
    \frac{1}{k!(\ell-k)!} (x-1) \prod_{r=2}^{\ell}(x-r+k).  
  \end{equation*}
  Let us turn to the linear part w.r.t. $m$. Removing the factor $-m$ and using
  twice \Cref{summing_2}, one obtains two contributions
  \begin{equation*}
    \frac{1}{k!(\ell-k)!} \frac{(x-1)\ell-k}{(x-1)(x-2)} \prod_{r=1}^{\ell} (x-1-r+k)
  \end{equation*}
  and
  \begin{equation*}
    \frac{1}{k!(\ell-k-1)!} \frac{(x-1)(\ell-1)-k}{(x-1)(x-2)} \prod_{r=1}^{\ell-1} (x-1-r+k).
  \end{equation*}
  Summing both terms gives as expected
  \begin{equation*}
    \frac{\ell}{k!(\ell-k)!} \prod_{r=2}^{\ell} (x-r+k). 
  \end{equation*}
\end{proof}

The special case $m=1$ and $y=n$ and $x=n+2$, is related to the linear arbor
$t_n$ of size $n$ with no multiple vertex.
\begin{proposition}
  The $M$-triangle of the poset $P_{t_n}$ for the arbor $t_n$ is
  \begin{equation*}
    M^{\TA}_n = \sum_{0 \leq k \leq \ell \leq n} \frac{n+1-\ell}{n+k-\ell+1}\binom{n+k}{n} \binom{n}{\ell-k} {(-X)}^{k} {(-Y)}^{\ell}.  
  \end{equation*}
\end{proposition}

This formula is essentially the same as the $F$-triangle for the
cluster complex of type $\TA_n$. Recall that this $F$-triangle is
given by \cite[Prop. 11]{chap_2004}:
\begin{equation}
  F^{\TA}_n = \sum_{k=0}^{n}\sum_{\ell=0}^n \frac{\ell+1}{k+\ell+1}\binom{n}{k+\ell}\binom{n+k}{n}X^k Y^{\ell}.
\end{equation}

It follows easily that
\begin{equation}
  M^{\TA}_{n}(X,Y) = F^{\TA}_n(-X, -1/Y) \cdot {(-Y)}^n.
\end{equation}

\begin{proposition}
  The transmuted $M$-triangle of $\po^F_{1,n+2,n}$ is the $M$-triangle
  of the poset of noncrossing-partitions of type $\TA_n$.
\end{proposition}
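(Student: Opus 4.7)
The plan is to reduce the statement to the classical theorem of Athanasiadis \cite{athanasiadis}, conjectured in \cite{chap_2004}, that the birational map \eqref{M_from_F} of \Cref{app:triangles} converts the $F$-triangle of the cluster complex of type $\TA_n$ into the $M$-triangle of the noncrossing-partitions lattice of type $\TA_n$. The crucial input from the preceding material is the identity
\begin{equation*}
  M^{\TA}_{n}(X,Y) = F^{\TA}_n(-X,\, -1/Y)\cdot {(-Y)}^n
\end{equation*}
displayed just above the statement.

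First, I would apply the transmutation operation \eqref{transmutation} directly to this formula. Substituting $X \mapsto (1-Y)/(1-XY)$ and $Y \mapsto 1-XY$ in the two arguments and in the outer factor gives
\begin{equation*}
  \overline{M^{\TA}_n}(X,Y) \;=\; F^{\TA}_n\!\left(\frac{Y-1}{1-XY},\ \frac{1}{XY-1}\right)\cdot (XY-1)^n.
\end{equation*}
This is a purely mechanical substitution; the only care needed is with the signs coming from the outer $(-Y)^n$ and the inner arguments $-X$ and $-1/Y$.

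Second, I would compare this expression with the rational substitution \eqref{M_from_F} from \Cref{app:triangles}, which, when applied to $F^{\TA}_n$, produces the $M$-triangle of $\NC_{\TA_n}$. This reduces to checking that two explicit birational changes of variable in two variables coincide, which is a direct algebraic verification. This is exactly what one expects: as explained in \Cref{sec:transmutation}, the operation of transmutation was defined precisely as the conjugation of transposition on $H$-triangles by the conversion maps between $H$- and $M$-triangles, so its composition with the formula $M^{\TA}_n = F^{\TA}_n(-X,-1/Y)(-Y)^n$ is built to reproduce the classical $F \to M_{\NC}$ rule.

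The main obstacle is bookkeeping rather than conceptual: one must carefully match the conventions of the two-variable substitutions in the transmutation \eqref{transmutation}, in the formula for $M^{\TA}_n$ in terms of $F^{\TA}_n$, and in \eqref{M_from_F}. Once this identification of rational substitutions is done, the proposition follows from Athanasiadis's theorem applied to $F^{\TA}_n$. Note that the Zeta polynomial half of the ``transmuted partner'' claim was already established in \Cref{zeta_A} by comparing with Edelman's formula \cite{edelman}, so this proposition completes the picture.
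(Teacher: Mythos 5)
Your first step is correct and matches the paper: applying \eqref{transmutation} to $M^{\TA}_{n}(X,Y)=F^{\TA}_n(-X,-1/Y)\,{(-Y)}^n$ does give
$\overline{M^{\TA}_n}(X,Y)=F^{\TA}_n\bigl(\tfrac{1-Y}{XY-1},\tfrac{1}{XY-1}\bigr)\,{(XY-1)}^n$.
The gap is in your second step. You claim the proposition now reduces to checking that this substitution coincides with the substitution \eqref{M_from_F}, i.e.\ with
$F^{\TA}_n\bigl(\tfrac{Y(X-1)}{1-XY},\tfrac{XY}{1-XY}\bigr)\,{(1-XY)}^n$, as ``a direct algebraic verification'' of an identity of birational changes of variable. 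But the two substitutions do \emph{not} coincide: for instance the second arguments are $\tfrac{1}{XY-1}$ and $\tfrac{XY}{1-XY}=-1-\tfrac{1}{XY-1}$, which differ as rational functions. What is true is that the two substitutions are intertwined by the duality $(X,Y)\mapsto(1/Y,1/X)$ together with the factor ${(XY)}^n$: if $N(X,Y)$ denotes the right-hand side of \eqref{M_from_F} applied to $F^{\TA}_n$, a short computation shows
$\overline{M^{\TA}_n}(X,Y)=N(1/Y,1/X)\,{(XY)}^n$, not $N(X,Y)$ itself.

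So the verification you propose would fail as stated, and an extra input is needed: the symmetry $N(X,Y)=N(1/Y,1/X)\,{(XY)}^n$, which holds because the noncrossing-partitions lattice of type $\TA_n$ is self-dual (its $M$-triangle is invariant under the rank-reversing substitution). This is precisely the third ingredient in the paper's proof, alongside Athanasiadis's $F=M$ theorem and the formula relating $M^{\TA}_n$ to $F^{\TA}_n$. Your heuristic that transmutation ``is built to reproduce'' the classical $F\to M$ rule is not literally correct at the level of substitutions; it reproduces it only up to this duality, which must be invoked (or an equivalent symmetry of $F^{\TA}_n$ must be proved) to close the argument.
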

\begin{proof}
  The transmutation of $M^{\TA}_{n}$ is
  \begin{equation*}
    M^{\TA}_{n}\left(\frac{1-Y}{1-XY},1-XY\right) = F^{\TA}_n\left(\frac{1-Y}{XY-1}, \frac{1}{XY-1} \right) \cdot {(XY-1)}^n.
  \end{equation*}
  By the proof of the $F=M$ conjecture in \cite{athanasiadis} and the transformations in \Cref{app:triangles}, the $M$-triangle
  for the poset of noncrossing-partitions of type $\TA_n$ is given by
  \begin{equation*}
    N(X,Y) = F^{\TA}_n\left(\frac{Y(X-1)}{1-XY}, \frac{XY}{1-XY}\right) \cdot {(1-XY)}^n.
  \end{equation*}
  Because the poset of noncrossing-partitions is self-dual, one has the symmetry
  \begin{equation*}
    N(X,Y) = N(1/Y,1/X) \cdot {(XY)}^n.
  \end{equation*}
  Combining these formulas gives the expected equality.
\end{proof}

An explicit formula for the $M$-triangle of the poset of
noncrossing-partitions of type $\TA_n$, not needed here, can be found
in~\cite[Prop. A]{kratt_F} with parameter $m=1$.

Recall that $t_n$ denotes the arbor of size $n$ with no multiple
vertex. It follows from the previous results that the poset $P_{t_n}$
is a transmuted partner of the poset of noncrossing partitions of type
$\TA_n$. In particular, the $h$-vector of $t_n$ is equal to the
$h$-vector of the generalized associahedra of type $\TA_n$.

\section{Arbors of type $\TB$}\label{sec:B}

Let $n \geq 1$ and $k\geq 0$. Let $\po^{\TB}_{n,k}$ be the set of
$n$-tuples $(x_1,\ldots,x_n)$ of nonnegative integers with sum less
than or equal to $k$. This is a poset for term-wise comparison.

The poset $\po^{\TB}_{n,n}$ is exactly the poset $P_t$ attached by the
general construction to the arbor with one vertex containing $n$
elements.

The posets $\po^{\TB}_{n,k}$ are graded by the height function $\haut$
which is the sum of coordinates.  They have a unique minimum element
$\bot$. The definition of the poset $\po^{\TB}_{n,k}$ implies that its
intervals are products of total orders.
 

\begin{lemma}\label{ideal_sup_TB}
  In a poset $\po^{\TB}_{n,k}$, the upper ideal generated
  by any element of height $j$ is isomorphic to $\po^{\TB}_{n,k-j}$.
\end{lemma}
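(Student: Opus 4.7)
The plan is to exhibit an explicit isomorphism given by translation. Fix an element $a = (a_1, \ldots, a_n) \in \po^{\TB}_{n,k}$ with $\haut(a) = \sum_i a_i = j$, and define the map
\begin{equation*}
  \varphi \colon x = (x_1, \ldots, x_n) \longmapsto (x_1 - a_1, \ldots, x_n - a_n).
\end{equation*}

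First I would check that $\varphi$ maps the upper ideal $\{x : x \geq a,\ \sum x_i \leq k\}$ into $\po^{\TB}_{n,k-j}$. Coordinate-wise, $x \geq a$ means $x_i \geq a_i$, so each coordinate $x_i - a_i$ is a nonnegative integer. The sum $\sum_i (x_i - a_i) = \sum_i x_i - j$ is bounded above by $k - j$, exactly the defining condition for $\po^{\TB}_{n,k-j}$.

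Next I would show $\varphi$ is a bijection onto $\po^{\TB}_{n,k-j}$ by exhibiting its inverse $y \mapsto y + a$: adding $a$ coordinate-wise to any $y \in \po^{\TB}_{n,k-j}$ yields a tuple $\geq a$ with sum at most $(k-j) + j = k$, hence an element of the upper ideal generated by $a$. Both $\varphi$ and its inverse preserve term-wise comparison, because adding or subtracting the fixed vector $a$ is order-preserving. This gives the desired isomorphism of posets.

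The argument is essentially routine once the translation map is written down; there is no real obstacle, as the poset structure is defined entirely by term-wise comparison and the only nontrivial constraint is the height bound, which transforms from $\leq k$ to $\leq k - j$ under the shift.
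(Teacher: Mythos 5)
Your proposal is correct and uses exactly the same idea as the paper, which proves the lemma by the subtraction-by-$a$ bijection; you have simply written out the routine verifications in more detail.
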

\begin{proof}
  This follows directly from the definition. The bijection is given by substraction of the chosen minimal element.
\end{proof}



\begin{theorem}\label{zeta_B}
  The Zeta polynomial of $\po^{\TB}_{n,k}$ is equal to $\binom{n(u-1)+k}{k}$.
\end{theorem}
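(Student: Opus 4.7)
The plan is to count chains directly using a bijection with lattice points in a simplex, then verify that the generating polynomial of these counts matches the right hand side of \eqref{defi_ZB}.

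Fix an integer $m \geq 2$ and consider a chain $e_1 \leq e_2 \leq \cdots \leq e_{m-1}$ in $\po^{\TB}_{n,k}$. Writing $e_\ell = (e_{\ell,1},\ldots,e_{\ell,n})$, the chain condition becomes $e_{\ell,i} \leq e_{\ell+1,i}$ for every coordinate $i$, and the membership condition is $\sum_i e_{m-1,i} \leq k$. Setting $e_{0,i}=0$ and introducing the increments $d_{\ell,i} = e_{\ell,i} - e_{\ell-1,i}$ for $1 \leq \ell \leq m-1$ and $1 \leq i \leq n$, one obtains a bijection between such chains and $n(m-1)$-tuples of nonnegative integers $(d_{\ell,i})$ whose total sum is at most $k$. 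This is the standard bijection between weakly increasing sequences and their step sequences.

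The number of such tuples is counted by a classical stars-and-bars argument: solutions in $\NN^{N}$ to $\sum y_j \leq k$ are in bijection with solutions in $\NN^{N+1}$ to $\sum y_j = k$, hence counted by $\binom{N+k}{k}$. Taking $N = n(m-1)$ gives
\begin{equation*}
  \#\{\text{chains of length } m-1 \text{ in } \po^{\TB}_{n,k}\} = \binom{n(m-1)+k}{k} = \frac{1}{k!}\prod_{j=1}^{k}\bigl(n(m-1)+j\bigr).
\end{equation*}
The rightmost expression is exactly $Z^{\TB}_{n,k}(m)$ as defined in \eqref{defi_ZB}.

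Since both $u \mapsto Z^{\TB}_{n,k}(u)$ and the Zeta polynomial of $\po^{\TB}_{n,k}$ are polynomials in $u$ of degree $k$ and agree on all integers $m \geq 2$, they are equal. There is no real obstacle here: once the increment encoding is set up, the identification of the count with a single binomial coefficient is immediate, and matching it with $Z^{\TB}_{n,k}$ is a direct rewriting. As a sanity check, setting $m = 2$ recovers the cardinality $\binom{n+k}{k} = \sum_{j=0}^{k}\binom{n+j-1}{j}$ given in \eqref{cardinal_B_n_k}.
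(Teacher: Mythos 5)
Your proof is correct, and it takes a genuinely different and more elementary route than the paper. You encode a multichain $e_1 \leq \cdots \leq e_{m-1}$ by its coordinate-wise increments, observe that the only binding membership constraint is $\haut(e_{m-1}) \leq k$ (since the poset order is coordinate-wise and coordinates are nonnegative), and then a single stars-and-bars count gives $\binom{n(m-1)+k}{k}$, which is exactly $Z^{\TB}_{n,k}(m)$ from \eqref{defi_ZB}; agreement at all integers $m \geq 2$ then forces equality of polynomials. The paper proceeds instead by recursion: it splits a chain at its bottom element, uses the self-similarity of upper ideals (\Cref{ideal_sup_TB}) together with the cardinality formula \eqref{cardinal_B_n_k} to derive the recurrence $\Zeta^{\TB}_{n,k}(u) = \sum_i \binom{n-1+k-i}{n-1}\Zeta^{\TB}_{n,i}(u-1)$, and then verifies that the closed formula satisfies the same recurrence via a Chu--Vandermonde hypergeometric evaluation. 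Your bijection is shorter, avoids the hypergeometric machinery entirely, and makes the polynomiality of the chain count manifest; the paper's recursive scheme has the advantage of running in parallel with the analogous (and harder) type $\TA$ computation in \Cref{sec:A}, where no such one-line closed count is available. One small remark: the degree comparison in your last step is unnecessary, since two polynomials agreeing at infinitely many integers are already equal.
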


\begin{proof}
  Let us consider a chain $e_1 \leq \cdots \leq e_{u-1}$ in
  $\po^{\TB}_{n,k}$. One can describe the chain by its first
  element $e_1$ together with all differences $u_{i+1}-u_{i}$ for
  $1 \leq i \leq u-2$. This gives a bijection with $(u-1)$-tuples of $n$
  nonnegative integers whose total sum is less than or equal to
  $k$. The number of these tuples is the expected binomial coefficient.
\end{proof}

It follows from the general \Cref{zeta_B} at $k = n$ that the Zeta
polynomial of the poset $P_t$ for the arbor with just one vertex of
cardinality $n$ is the same as the known Zeta polynomial of the
noncrossing-partitions lattice of type $\TB$, first computed by Montenegro according to Reiner, see \cite{montenegro93,reiner}.

\smallskip

Let us now turn to the $M$-triangles of the posets $\po^{\TB}_{n,k}$, namely
\begin{equation*}
  M_{n,k} = \sum_{a \leq b} \mu(a,b) X^{|a|} Y^{|b|}.
\end{equation*}

As said above, by definition of $\po^{\TB}_{n,k}$, all its intervals
are products of total orders. By the same reasoning as in \Cref{subsec:M},
this leads to the expression
\begin{equation}
  M_{n,k} = \sum_{a \in \po^{\TB}_{n,k}} {(XY)}^{\haut(a)} {(1-1/X)}^{\nz(a)},
\end{equation}
where $\nz$ is the number of non-zero coordinates.

In this sum, let us write $a$ as a pair $(j,b)$ where $j$ is the first
coordinate and $b$ are the remaining coordinates seen as an element in
$\po^{\TB}_{n-1,k-j}$.

By separating the contribution for $j=0$, this allows to rewrite the sum as
\begin{equation}
  \label{recurrence_M_B}
  M_{n,k} = M_{n-1,k} + (1-1/X) \sum_{j=1}^k {(XY)}^j M_{n-1,k-j}.
\end{equation}
Together with the initial condition
\begin{equation}
  \label{init_M_B}
  M_{0, k} = 1
\end{equation}
for the singleton poset, this equation determines the $M$-triangle by
induction on $n$.




\begin{proposition}
  The $M$-triangle for the poset $\po^{\TB}_{n,k}$ is
  \begin{equation}
    \sum_{r=0}^{k} \sum_{\ell=0}^{k-r} \binom{\ell+n-1}{n-1} \binom{n}{r} {(-X)}^{\ell} {(-Y)}^{\ell+r}.
  \end{equation}
\end{proposition}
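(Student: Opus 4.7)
The plan is to compute $M_{n,k}$ directly from its definition
\[
M_{n,k} = \sum_{a \le b} \mu(a,b)\, X^{\haut(a)} Y^{\haut(b)}
\]
by using the explicit form of the Möbius function on $\po^{\TB}_{n,k}$. Since every interval in $\po^{\TB}_{n,k}$ is a product of total orders, the Möbius function factors coordinatewise, and on each chain $\mu(i,j)$ vanishes unless $j-i\in\{0,1\}$. Therefore $\mu(a,b)=0$ unless $c := b-a \in \{0,1\}^n$, in which case $\mu(a,b) = (-1)^{\nz(c)}$. This is the same observation already used in \Cref{subsec:M} to rewrite $M$-triangles of posets $P_t$ via the polynomial $K_t$.

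With this in hand, I would reindex the sum by the pair $(a,c)$, where $a\in\NN^n$ and $c\in\{0,1\}^n$ are constrained only by $\haut(a)+\nz(c) = \haut(b) \le k$. Setting $\ell = \haut(a)$ and $r = \nz(c)$, the number of $c\in\{0,1\}^n$ with $\nz(c)=r$ is $\binom{n}{r}$, while the number of $a \in \NN^n$ with $\haut(a)=\ell$ is $\binom{\ell+n-1}{n-1}$. The monomial contribution is $(-1)^r X^\ell Y^{\ell+r} = (-X)^\ell(-Y)^{\ell+r}$ (the two factors of $(-1)^\ell$ cancel), and summing over all valid $(\ell, r)$ with $\ell+r\le k$ gives exactly the claimed formula.

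As a sanity check, I would verify that the proposed closed form satisfies the recurrence \eqref{recurrence_M_B} together with the initial condition \eqref{init_M_B}; this reduces to a Vandermonde-style identity separating the $r=0$ and $r\ge 1$ parts of the sum and is purely mechanical. The only delicate point in either approach is bookkeeping of the signs coming from $(-X)^\ell(-Y)^{\ell+r}=(-1)^r X^\ell Y^{\ell+r}$, but there is no genuine obstacle: the direct Möbius-function computation is short, and the recursive verification is a routine binomial manipulation.
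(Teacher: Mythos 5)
Your proof is correct, but it takes a genuinely different route from the paper. You compute the $M$-triangle directly: since every interval $[a,b]$ in $\po^{\TB}_{n,k}$ is a product of chains, $\mu(a,b)$ vanishes unless $c=b-a\in\{0,1\}^n$, in which case $\mu(a,b)=(-1)^{\nz(c)}$; reindexing by the independent pair $(a,c)$ with $\haut(a)=\ell$, $\nz(c)=r$ and $\ell+r\le k$ immediately yields the coefficient $\binom{\ell+n-1}{n-1}\binom{n}{r}$ of $(-X)^{\ell}(-Y)^{\ell+r}$, with the sign bookkeeping $(-1)^{2\ell+r}=(-1)^r$ exactly as you say. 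The paper instead performs the Möbius summation over the lower element first (as in \Cref{subsec:M}, giving $M_{n,k}=\sum_{b}{(XY)}^{\haut(b)}{(1-1/X)}^{\nz(b)}$), derives the recurrence \eqref{recurrence_M_B} by splitting off one coordinate, and then verifies the closed formula by induction on $n$ through binomial manipulations, with initial condition \eqref{init_M_B}. Both arguments rest on the same structural fact about intervals, but your closed-form enumeration is shorter and avoids induction entirely, whereas the paper's verification fits the recursive machinery it uses throughout (the recurrence \eqref{recurrence_M_B} being set up in the text anyway); your suggested sanity check via that recurrence is in fact precisely the paper's proof.
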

\begin{proof}
  This is done by checking that this formula satisfies the induction
  \eqref{recurrence_M_B} and the initial condition \eqref{init_M_B}. The initial condition is clear.
  For the induction step, the right hand side of \eqref{recurrence_M_B} is the sum of
  \begin{equation*}
    \sum_{j=0}^k \sum_{r=0}^{k-j}\sum_{\ell=0}^{k-j-r} \binom{\ell+n-2}{n-2}\binom{n-1}{r}{(-X)}^{\ell+j} {(-Y)}^{\ell+r+j}
  \end{equation*}
  and
  \begin{equation*}
    \sum_{j=1}^k \sum_{r=0}^{k-j}\sum_{\ell=0}^{k-j-r} \binom{\ell+n-2}{n-2}\binom{n-1}{r}{(-X)}^{\ell+j-1} {(-Y)}^{\ell+r+j}.
  \end{equation*}
  After replacing the
  summation variable $j$ by a new summation variable
  $\ell' = \ell +j$ and some further careful manipulations of summations, they become
  \begin{equation*}
    \sum_{r=0}^{k} \sum_{\ell'=0}^{k-r} \binom{\ell'+n-1}{n-1}\binom{n-1}{r} {(-X)}^{\ell'} {(-Y)}^{\ell'+r}
  \end{equation*}
  and
  \begin{equation*}
    \sum_{r=1}^{k} \sum_{\ell'=0}^{k-r} \binom{\ell'+n-1}{n-1}\binom{n-1}{r-1} {(-X)}^{\ell'} {(-Y)}^{\ell'+r}.
  \end{equation*}
  Adding these two expressions then gives the expected expression.
\end{proof}

The special case $k=n$ is related to the arbor $t_n$ with one vertex
containing $n$ elements.
\begin{proposition}
  The $M$-triangle of the poset $P_{t_n}$ for the arbor $t_n$ is
  \begin{equation*}
    M_{n,n}(X,Y) = \sum_{r=0}^{n} \sum_{\ell=0}^{n-r} \binom{\ell+n-1}{n-1} \binom{n}{r} {(-X)}^{\ell} {(-Y)}^{\ell+r}.
  \end{equation*}
\end{proposition}

This formula is essentially the same as the $F$-triangle for the
cluster complex of type $\TB_n$. Recall that this $F$-triangle is
given by \cite[Prop. 13]{chap_2004}:
\begin{equation}
  F^{\TB}_n = \sum_{k=0}^{n}\sum_{\ell=0}^n \binom{n}{k+\ell}\binom{n+k-1}{n-1}X^k Y^{\ell}.
\end{equation}

It follows easily that
\begin{equation}
  M_{n,n}(X,Y) = F^{\TB}_n(-X, -1/Y) \cdot {(-Y)}^n.
\end{equation}

\begin{proposition}
  The transmuted $M$-triangle of $\po^{\TB}_{n,n}$ is the $M$-triangle
  of the poset of noncrossing-partitions of type $\TB_n$.
\end{proposition}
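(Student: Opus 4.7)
The plan is to mimic verbatim the proof given for type $\TA$ in the preceding section, since the ingredients are set up in exact parallel. The key identity already established in the excerpt is
\begin{equation*}
M_{n,n}(X,Y) = F^{\TB}_n(-X, -1/Y) \cdot {(-Y)}^n.
\end{equation*}
First, I would apply the transmutation operation by substituting $X \mapsto (1-Y)/(1-XY)$ and $Y \mapsto 1-XY$ in this identity. A straightforward simplification yields
\begin{equation*}
\overline{M}_{n,n}(X,Y) = F^{\TB}_n\left(\frac{Y-1}{1-XY},\, \frac{1}{XY-1}\right) \cdot {(XY-1)}^n.
\end{equation*}

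Next, I would invoke the $F=M$ relation in type $\TB$, which (by the $\TA\TD\TE\TB\TC$ case-by-case verification and the uniform proof of Athanasiadis) says that the $M$-triangle $N^{\TB}_n$ of the noncrossing-partitions lattice of type $\TB_n$ is expressed from the cluster $F$-triangle $F^{\TB}_n$ by the conversion formulas recalled in \Cref{app:triangles}:
\begin{equation*}
N^{\TB}_n(X,Y) = F^{\TB}_n\left(\frac{Y(X-1)}{1-XY},\, \frac{XY}{1-XY}\right) \cdot {(1-XY)}^n.
\end{equation*}
Then, exploiting the self-duality of the noncrossing-partitions lattice of type $\TB_n$ (classical, and analogous to the $\TA$ case used just above), one gets the symmetry
\begin{equation*}
N^{\TB}_n(X,Y) = N^{\TB}_n(1/Y, 1/X) \cdot {(XY)}^n.
\end{equation*}

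The final step is an algebraic verification: combining the two displayed formulas for $N^{\TB}_n$ with the substitutions $X \mapsto 1/Y$, $Y \mapsto 1/X$ and simplifying should produce exactly the formula obtained above for $\overline{M}_{n,n}(X,Y)$. This is a direct computation entirely parallel to the $\TA$ case, where the only difference between the two $F$-triangle formulas of \cite{chap_2004} is the replacement of the Catalan factor $\frac{\ell+1}{k+\ell+1}\binom{n}{k+\ell}$ by the simpler $\binom{n}{k+\ell}$ and of $\binom{n+k}{n}$ by $\binom{n+k-1}{n-1}$, both of which are carried passively through the birational substitutions. The main obstacle I expect is bookkeeping: tracking the signs coming from $(-X, -1/Y)$ versus $(X,Y)$, the interplay between the factors ${(XY-1)}^n$, ${(1-XY)}^n$ and ${(XY)}^n$, and making sure that the composition of the three substitutions produces the identity on the relevant rational function. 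Once this is pinned down, the equality between the two expressions for $\overline{M}_{n,n}(X,Y)$ and $N^{\TB}_n(X,Y)$ follows formally, with no further combinatorial input needed beyond what has already been proved in the excerpt.
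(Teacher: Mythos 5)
Your proposal is correct and follows essentially the same route as the paper: apply transmutation to the identity $M_{n,n}(X,Y) = F^{\TB}_n(-X,-1/Y)\cdot(-Y)^n$, invoke the $F=M$ theorem of Athanasiadis together with the conversion formulas to express the type $\TB_n$ noncrossing-partition $M$-triangle via $F^{\TB}_n$, and conclude using the self-duality symmetry $N(X,Y)=N(1/Y,1/X)\cdot(XY)^n$. The final bookkeeping you describe is exactly the computation the paper leaves implicit, and it goes through.
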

\begin{proof}
  The transmutation of $M_{n,n}$ is
  \begin{equation*}
    M_{n,n}\left(\frac{1-Y}{1-XY},1-XY\right) = F^{\TB}_n\left(\frac{1-Y}{XY-1}, \frac{1}{XY-1} \right) \cdot {(XY-1)}^n.
  \end{equation*}
  By the proof of the $F=M$ conjecture in~\cite{athanasiadis} and the
  conversion formulas in \Cref{app:triangles}, the $M$-triangle for the
  poset of noncrossing-partitions of type $\TB_n$ is given by
  \begin{equation*}
    N(X,Y) = F^{\TB}_n\left(\frac{Y(X-1)}{1-XY}, \frac{XY}{1-XY}\right) \cdot {(1-XY)}^n.
  \end{equation*}
  Because the poset of noncrossing-partitions is self-dual, one has the symmetry
  \begin{equation*}
    N(X,Y) = N(1/Y,1/X) \cdot {(XY)}^n.
  \end{equation*}
  Combining these formulas gives the expected equality.
\end{proof}

An explicit formula for the $M$-triangle of the poset of
noncrossing-partitions of type $\TB_n$, not needed here, can
be found in~\cite[Prop. B]{kratt_F} with parameter $m=1$.

It follows that the poset $P_{t_n}$ for the arbor $t_n$ with one vertex
containing $n$ elements is a transmuted partner of the poset of
noncrossing partitions of type $\TB_n$. In particular, the $h$-vector
of $t_n$ is equal to the $h$-vector of the generalized associahedra of
type $\TB_n$.



Let us also record here the following value, useful in the next section.

\begin{proposition}\label{valeur_K_B}
  For $n \geq 1$, let $t_n$ be the arbor with just one vertex with $n$ elements. Then
  \begin{equation}
    K_{t_n}(X,Y) = \sum_{0 \leq j \leq k \leq n} \binom{n}{j} \binom{k-1}{k-j} X^j Y^k.
  \end{equation}
\end{proposition}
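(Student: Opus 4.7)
The plan is to prove this by direct enumeration, unpacking the definition of $K_t$ in equation \eqref{def_Kt}. Since $t$ consists of a single vertex containing the $n$-element set $I$, the defining inequalities of $Q_t$ reduce to $x_i \geq 0$ for all $i \in I$ together with $\sum_{i \in I} x_i \leq n$. Hence $P_t$ is precisely the set of $n$-tuples of nonnegative integers with sum at most $n$, which is the poset $\po^{\TB}_{n,n}$ studied earlier in this section.

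Next, I would fix integers $j, k$ with $0 \leq j \leq k \leq n$ and count the elements $a \in P_t$ satisfying $\nz(a) = j$ and $\haut(a) = k$; note that the inequality $k \leq n$ is forced, while $j \leq k$ is automatic since each nonzero coordinate is at least $1$. Such an element is determined by two independent pieces of data: the support of $a$, which is a $j$-element subset of $I$ and contributes $\binom{n}{j}$, and the ordered tuple of positive values of $a$ at this support, which is a composition of $k$ into exactly $j$ positive parts and contributes the standard stars-and-bars count $\binom{k-1}{j-1} = \binom{k-1}{k-j}$. Multiplying these two factors and then summing $X^j Y^k$ weighted by this number over $0 \leq j \leq k \leq n$ gives the claimed formula, with the boundary case $j = k = 0$ representing the zero vector and producing the constant term $1$ under the convention $\binom{-1}{0} = 1$.

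There is essentially no obstacle in this argument: because the arbor has no subtrees, the recursive machinery of \Cref{recurrence_K} collapses (the product $W$ over subtrees is the constant $1$, with $r = n$), and the only nontrivial contribution in the double sum comes from $\ell = j$, $m = k$, giving exactly $\binom{n}{j} \binom{k-1}{k-j}$. So one may equally well view the proposition as the base-case evaluation of \Cref{recurrence_K}, but the direct bijective counting above is the cleanest presentation.
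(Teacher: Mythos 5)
Your proof is correct. The paper's own proof is the one-line observation you make at the end: it simply invokes \Cref{recurrence_K} in the subtree-free case, where the product $W$ is $1$, $r=n$, and the double sum collapses to the single term $\ell=j$, $m=k$, yielding $\binom{n}{j}\binom{k-1}{k-j}$. Your primary argument is instead a self-contained direct enumeration: identify $P_t$ with $\po^{\TB}_{n,n}$, fix $\nz(a)=j$ and $\haut(a)=k$, choose the support in $\binom{n}{j}$ ways and the positive values as a composition of $k$ into $j$ parts in $\binom{k-1}{j-1}=\binom{k-1}{k-j}$ ways. This is essentially the same stars-and-bars counting that underlies the paper's proof of \Cref{recurrence_K} itself, so nothing new is needed, but your presentation has the advantage of being independent of the recursive machinery and of making the combinatorial meaning of the two binomials transparent; you also correctly handle the degenerate terms ($j=k=0$ via $\binom{-1}{0}=1$, and $j=0<k$ giving zero). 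Either route is acceptable, and since you verified the specialization of \Cref{recurrence_K} as well, your writeup subsumes the paper's argument.
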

\begin{proof}
  This is a direct application of \Cref{recurrence_K} in the case
  without sub-tree. The double sum there is restricted to
  the unique term $\ell = j$ and $m=k$.
\end{proof}

\section{Arbors and halohedra}\label{sec:halohedra}

The halohedra form a sequence of simple convex polytopes. They have a
topological interpretation as a compactification of a configuration
space, similar to the case of associahedra and cyclohedra
\cite{devadoss1, devadoss2}. They also have a combinatorial
description in terms of ``design tubings'' of the cycle graphs, that
makes them close relatives of graph-associahedra. The halohedra have
also appeared in mathematical physics \cite{salvatori,salvatori2}.

There is one halohedron $H_n$ in dimension $n$ for each dimension
$n \geq 1$, starting with a segment and a pentagon. The number of
vertices of $H_n$ is
\begin{equation*}
  \label{cardinal_halo}
  \frac{3 n - 1}{n} \binom{2n-2}{n-1}.
\end{equation*}

For $n \geq 2$, let $h_n$ be the arbor with $n-1$ elements in the root
vertex plus one leaf with one element.
\begin{proposition}
  For $n \geq 2$, the cardinality of $P_{h_n}$ is the number of
  vertices of $H_n$.
\end{proposition}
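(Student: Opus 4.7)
The plan is to count the lattice points of $Q_{h_n}$ directly by reading off the defining inequalities and splitting on the leaf coordinate.

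The arbor $h_n$ has two vertices: a root $\rt$ containing $n-1$ elements, whose coordinates I shall call $x_1,\ldots,x_{n-1}$, and a single leaf $v$ containing one element, whose coordinate I shall call $y$. Writing out the inequalities~\eqref{def_inegalite}: the one attached to $v$ gives $y \leq 1$, and the one attached to $\rt$ gives $x_1+\cdots+x_{n-1}+y \leq n$, together with the nonnegativity of all coordinates. Since the lattice points of $Q_{h_n}$ are exactly the elements of $P_{h_n}$, I count integer solutions in the two cases $y=0$ and $y=1$.

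For $y=0$, I need nonnegative integer solutions of $x_1+\cdots+x_{n-1}\leq n$, which by standard stars-and-bars (after introducing a slack variable) gives $\binom{2n-1}{n-1}$ solutions. For $y=1$, the bound becomes $\sum x_i \leq n-1$, yielding $\binom{2n-2}{n-1}$ solutions. Summing,
\begin{equation*}
|P_{h_n}| = \binom{2n-1}{n-1} + \binom{2n-2}{n-1}.
\end{equation*}

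It then remains to verify this equals $\frac{3n-1}{n}\binom{2n-2}{n-1}$. Using Pascal's identity $\binom{2n-1}{n-1}=\binom{2n-2}{n-1}+\binom{2n-2}{n-2}$ and the ratio $\binom{2n-2}{n-2}=\frac{n-1}{n}\binom{2n-2}{n-1}$, the total becomes
\begin{equation*}
2\binom{2n-2}{n-1}+\binom{2n-2}{n-2} = \left(2+\frac{n-1}{n}\right)\binom{2n-2}{n-1} = \frac{3n-1}{n}\binom{2n-2}{n-1},
\end{equation*}
which matches the stated vertex count of $H_n$. There is no real obstacle here: once the inequalities are read off correctly from the two-vertex structure of $h_n$, the proof is a short binomial identity, and one can sanity-check with $n=2$ (giving $5$, the pentagon $H_2$) and $n=3$ (giving $16$).
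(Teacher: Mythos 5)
Your count is correct, and the route differs from the paper's only in packaging. The paper obtains the cardinality as an instance of its general recursion (\Cref{calcul_ehr} at $m=1$, seeded with the polynomial $1+X$ for the leaf), which yields the full height-refined generating polynomial $\sum_{j=0}^{n}\bigl(\binom{n+j-2}{n-2}+\binom{n+j-3}{n-2}\bigr)X^j$ and only then specializes to $X=1$ to get $\binom{2n-1}{n-1}+\binom{2n-2}{n-1}$. You instead read off the two defining inequalities of $Q_{h_n}$ directly ($y\leq 1$ for the leaf, $\sum_i x_i + y \leq n$ for the root), split on $y\in\{0,1\}$, and apply stars-and-bars — arriving at the same two binomials without invoking any of the recursive machinery. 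What your version buys is self-containedness and transparency for this particular two-vertex arbor; what the paper's version buys is uniformity with the rest of \Cref{sec:halohedra} (the same recursion with the same seed is reused for $h'_n$ and for the $h$-vector computations) plus the finer height data. A small bonus of your write-up is that you carry out the binomial identity $\binom{2n-1}{n-1}+\binom{2n-2}{n-1}=\frac{3n-1}{n}\binom{2n-2}{n-1}$ explicitly, which the paper leaves to the reader.
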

\begin{proof}
  This is done using \Cref{calcul_ehr} at $m=1$, using as starting
  point the generating polynomial $1+X$ for the leaf vertex. One finds
  that the generating polynomial for $P_t$ with respect to height is
  \begin{equation*}
    \sum_{j=0}^{n} \left(\binom{n+j-2}{n-2}+\binom{n+j-3}{n-2} \right) X^j.
  \end{equation*}
  This can be evaluated at $X=1$ to 
  \begin{equation*}
    \binom{2n-1}{n-1} + \binom{2n-2}{n-1},
  \end{equation*}
  which is the expected result.
\end{proof}

For $n \geq 1$, let $h'_n$ be the arbor with one element in the root
vertex plus one leaf with $n-1$ elements. This is the reverse arbor of $h_n$.
\begin{proposition}
  For $n \geq 1$, the cardinality of $P_{h'_n}$ is the number of
  vertices of $H_n$.
\end{proposition}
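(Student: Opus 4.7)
The plan is to mimic the computation carried out for $h_n$, now taking as sub-tree the arbor with a single vertex of size $n-1$ rather than a leaf, and apply \Cref{calcul_ehr} at $m=1$. The root vertex of $h'_n$ has size $r=1$, and the unique sub-tree $t_1$ is a single vertex of size $n-1$, whose polytope is the simplex $\{x \in \RR_{\geq 0}^{n-1} : \sum x_i \leq n-1\}$. By a standard count (or directly from the cardinality decomposition~\eqref{cardinal_B_n_k} in \Cref{sec:B}), the generating polynomial of the lattice points of $Q_{t_1}$ by height is
\begin{equation*}
  F_{t_1,1}(X) = \sum_{j=0}^{n-1} \binom{n-2+j}{n-2} X^j.
\end{equation*}

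Next I would plug this $W_j = \binom{n-2+j}{n-2}$ into \Cref{calcul_ehr} with $m=1$, $r=1$ and total size $n$. Since $\binom{r+\ell-1}{\ell} = 1$ for $r=1$, the formula collapses to
\begin{equation*}
  [X^j] F_{h'_n,1} \;=\; \sum_{\ell=\max(0,\,j-n+1)}^{j} W_{j-\ell} \;=\; \sum_{i=\max(0,\,j-n+1)}^{j} \binom{n-2+i}{n-2}.
\end{equation*}
For $0 \leq j \leq n-1$ the hockey-stick identity gives $\binom{n-1+j}{n-1}$, and for $j=n$ it gives $\binom{2n-2}{n-1}$ (the term $i=0$ drops out).

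Finally, evaluating at $X=1$ yields
\begin{equation*}
  |P_{h'_n}| \;=\; \sum_{j=0}^{n-1} \binom{n-1+j}{n-1} + \binom{2n-2}{n-1} \;=\; \binom{2n-1}{n-1} + \binom{2n-2}{n-1},
\end{equation*}
using the hockey-stick identity once more. Writing $\binom{2n-1}{n-1} = \tfrac{2n-1}{n}\binom{2n-2}{n-1}$ and factoring shows this equals $\tfrac{3n-1}{n}\binom{2n-2}{n-1}$, which is the number of vertices of $H_n$ recalled above. The proof is thus essentially a pair of hockey-stick identities wrapping \Cref{calcul_ehr}; the only subtlety is keeping track of the truncation range at $j=n$, which is where the $r=1$ root vertex creates a single extra layer on top of the $\TB$-type sub-tree.
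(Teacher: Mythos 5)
Your proposal is correct and is essentially the paper's own proof: apply \Cref{calcul_ehr} at $m=1$ with root of size $r=1$ and the type-$\TB$ sub-tree polynomial $\sum_{j=0}^{n-1}\binom{n-2+j}{n-2}X^j$, obtain the height generating polynomial $\sum_{j=0}^{n-1}\binom{n-1+j}{n-1}X^j+\binom{2n-2}{n-1}X^n$ by hockey-stick, and evaluate at $X=1$. One small bookkeeping slip: after substituting $i=j-\ell$ the range is $0\leq i\leq \min(j,n-1)$, not $\max(0,j-n+1)\leq i\leq j$, so at $j=n$ it is the top term $i=n$ (i.e.\ $\ell=0$) that is excluded rather than $i=0$; the coefficients you then assert, and the final count $\binom{2n-1}{n-1}+\binom{2n-2}{n-1}=\frac{3n-1}{n}\binom{2n-2}{n-1}$, are nevertheless correct.
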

\begin{proof}
  This is done using \Cref{calcul_ehr} at $m=1$.
  For the leaf vertex with $n-1$ elements, the generating polynomial is
  \begin{equation*}
    \sum_{j=0}^{n-1}  \binom{n-2+j}{n-2} X^j.
  \end{equation*}
  One finds that the
  generating polynomial for $P_t$ with respect to height is
  \begin{equation*}
    \sum_{j=0}^{n-1} \binom{n-1+j}{n-1} X^j + \binom{2n-2}{n-1} X^n.
  \end{equation*}
  At $X=1$, this simplifies to
  \begin{equation*}
    \binom{2n-1}{n}+\binom{2n-2}{n-1},
  \end{equation*}
  which is the expected result.
\end{proof}

Let us now compute the $h$-vectors for these two families of arbors.

\begin{proposition}\label{hvecteur_halo}
  As a polynomial, for $n \geq 2$, the $h$-vector of $h_n$ is
  \begin{equation}
    \sum_{j=0}^{n-1} \binom{n-1}{j}\binom{n}{j} X^j +
    \sum_{j=1}^{n} \binom{n-1}{j-1}^2 X^j.
  \end{equation}
\end{proposition}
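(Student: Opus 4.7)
The plan is to apply Proposition \ref{recurrence_K1} directly, exploiting the extreme simplicity of the arbor $h_n$: its root vertex has size $r = n-1$, and there is a unique sub-tree $t_1$, namely the leaf arbor of size one. Since $K_{t_1}(X,Y) = 1 + XY$ by the initial case, the product $W = K_{t_1}$ has only two non-zero coefficients, $W_{0,0} = W_{1,1} = 1$. Because of the relation $h_t(X) = K_t(X,1)$ established earlier, the desired $h$-vector is exactly what Proposition \ref{recurrence_K1} computes.

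Feeding these data into the double sum of Proposition \ref{recurrence_K1}, only two pairs $(j-\ell, k')$ survive: namely $(0,0)$ and $(1,1)$. The pair $(0,0)$ forces $\ell = j$ and $k' = 0$, producing the term $\binom{n-1}{j}\binom{n}{j}$, valid in the range $0 \leq j \leq n-1$. The pair $(1,1)$ forces $\ell = j-1$ and $k' = 1$, producing $\binom{n-1}{j-1}\binom{n-1}{j-1} = \binom{n-1}{j-1}^2$, valid in the range $1 \leq j \leq n$. Summing these two contributions over $j$ yields exactly the claimed formula.

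The main obstacle, such as it is, is entirely clerical: one must carefully verify that the index bounds $\max(0, j - n + r) \leq \ell \leq \min(j, r)$ and $j - \ell \leq k' \leq n - r$ from Proposition \ref{recurrence_K1}, when specialized to $r = n-1$ (so $n - r = 1$), do select precisely these two pairs with the stated ranges of $j$. No creative input is needed beyond this bookkeeping.
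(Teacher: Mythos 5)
Your proposal is correct and is exactly the paper's own argument: the paper likewise proves this by feeding the leaf contribution $K = 1+XY$ (so $W_{0,0}=W_{1,1}=1$) into \Cref{recurrence_K1} with $r=n-1$, and the two surviving index pairs give the two binomial sums with the stated ranges. Your bookkeeping of the bounds matches, so nothing further is needed.
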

\begin{proof}
  One computes $K_{h_n}(X,1)$ using \Cref{recurrence_K1}, with
  starting point the polynomial $1+XY$ for the leaf vertex. One gets
  exactly the expected formula for the coefficient of $X^j$.
\end{proof}

One can easily check that this polynomial is palindromic.

\begin{proposition}
  As a polynomial, for $n \geq 1$, the $h$-vector of $h'_n$ is
  \begin{equation}
    \sum_{j=0}^{n-1} \binom{n-1}{j}^2 X^j +
    \sum_{j=1}^{n} \binom{n-1}{j-1}\binom{n}{j} X^j.
  \end{equation}
\end{proposition}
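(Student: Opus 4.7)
The plan is to compute $K_{h'_n}(X,1)$ directly using the recursion of \Cref{recurrence_K1}, since the $h$-vector is given by $h_t(X) = K_t(X,1)$. The arbor $h'_n$ has root of size $r=1$ with exactly one sub-tree $t'$, namely the one-vertex arbor with $n-1$ elements. By \Cref{valeur_K_B}, the product $W = K_{t'}$ has coefficients
\begin{equation*}
W_{i,k} = \binom{n-1}{i}\binom{k-1}{k-i} \quad \text{for } 0 \leq i \leq k \leq n-1,
\end{equation*}
with the convention $\binom{-1}{0}=1$ for the $(i,k)=(0,0)$ entry.

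With $r=1$, the summation index $\ell$ in \Cref{recurrence_K1} takes only the values $0$ and $1$. I will compute the two contributions separately and add them. For $\ell=0$, the inner sum reduces to
\begin{equation*}
\binom{n-1}{j}\sum_{k'=j}^{n-1} \binom{k'-1}{j-1},
\end{equation*}
which by the hockey-stick identity equals $\binom{n-1}{j}^{2}$ (with the $j=0$ case handled via the convention above). This yields the first sum in the proposition.

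For $\ell=1$ (so $j\geq 1$), the contribution is
\begin{equation*}
\binom{n-1}{j-1}\sum_{k'=j-1}^{n-1}(n-k')\binom{k'-1}{j-2}.
\end{equation*}
Here the key step is to show that this inner sum equals $\binom{n}{j}$. After the substitution $m = k'-1$ and writing $n-1-m = (n-1)-m$, one splits the sum into $(n-1)\sum_m \binom{m}{j-2}$ minus $\sum_m m\binom{m}{j-2}$, then uses the identity $m\binom{m}{j-2} = (j-1)\binom{m}{j-1} + (j-2)\binom{m}{j-2}$ combined with hockey-stick to reduce everything to binomial coefficients in $n-1$. A short simplification using $\binom{n-1}{j} = \frac{n-j}{j}\binom{n-1}{j-1}$ then collapses the expression to $\binom{n}{j}$, giving the second sum in the claim. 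The boundary case $j=n$ (where $\ell$ is forced to $1$) is checked separately and gives $1 = \binom{n-1}{n-1}\binom{n}{n}$, consistent with the formula.

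The main obstacle is the binomial identity in the $\ell=1$ case: the two-variable dependence on both $j$ and $n$ requires the careful telescoping above rather than a single application of Vandermonde or hockey-stick. All remaining manipulations are routine.
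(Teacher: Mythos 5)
Your proposal is correct and follows essentially the same route as the paper: apply \Cref{recurrence_K1} with $r=1$ and the starting polynomial of \Cref{valeur_K_B} for the $(n-1)$-element leaf, split the coefficient of $X^j$ into the $\ell=0$ and $\ell=1$ contributions, and evaluate the resulting binomial sums to $\binom{n-1}{j}^2$ and $\binom{n-1}{j-1}\binom{n}{j}$. The paper states these evaluations without detail, whereas you supply the hockey-stick/telescoping verification (one could also get the $\ell=1$ sum by a double hockey-stick, writing $n-k'$ as a sum of ones), but this is only a difference in level of detail, not of method.
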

\begin{proof}
  One computes $K_{h_n}(X,1)$ using \Cref{recurrence_K1}, with
  starting point the polynomial from \Cref{valeur_K_B}.
  One gets two terms for the coefficient of $X^j$. The first one is
  \begin{equation*}
    \sum_{k=j}^{n-1} \binom{n-1}{j}\binom{k-1}{k-j}
  \end{equation*}
  for $0 \leq j \leq n-1$, which sums to $\binom{n-1}{j}^2$. The second term is
  \begin{equation*}
    \sum_{k=j-1}^{n-1} (n-k) \binom{n-1}{j-1}\binom{k-1}{k-j+1}
  \end{equation*}
  for $1 \leq j \leq n$, which sums to $\binom{n-1}{j-1}\binom{n}{j}$.
\end{proof}

It follows that the $h$-vectors for $h_n$ and
$h'_n$ coincide, as predicted by \Cref{EZ_conjecture2}.

The $h$-vectors for the halohedra are known by \cite[Theorem
6.1.11]{almeter}, which says, once slightly adapted, that the
generating series of $h$-vectors of halohedra for $n \geq 1$ is
\begin{equation*}
  h = \frac{1+(1+X)s}{2\sqrt{1-2(1+X)s+{(X-1)}^2s^2}} - \frac{1}{2} = (X+1)s + (X^2 + 3 X + 1) s^2 + \cdots
\end{equation*}
as a formal power series in $s$. This series therefore satisfies the algebraic equation
\begin{equation*}
  (1-2(1+X)s+{(X-1)}^2s^2) {(2 h+1)}^2 = {(1+(1+X)s)}^2,
\end{equation*}
which can also be written as
\begin{equation}
  \label{eq_alg_halo}
  (1-2(1+X)s+{(X-1)}^2s^2) (h + h^2) = X s^2 + X s + s.
\end{equation}

It is proved in \Cref{calcul_halo} that the generating series of the
common $h$-vectors for the two families of arbors considered above
satisfies the algebraic equation \eqref{eq_alg_halo}. As they start
with the same initial terms, this implies that they coincide with the
$h$-vectors of the halohedra.

Let us note that halohedra are related to the Coxeter type $\TD_n$:
for $n\geq 2$, the $h$-vector for $H_n$ is equal, as a polynomial in $X$, to the sum of the
$h$-vector for the associahedra of type $\TD_n$, with $X$ times the
$h$-vector for the associahedra of type $\TA_{n-2}$ (see
\cite[Fig. 5.12]{Fomin_Reading} for the necessary formulas).




\section{Hochschild polytopes}\label{sec:hochschild}

The Hochschild polytopes\footnote{They are also known as
  \textit{freehedra} in anglo-greek.} were introduced by Saneblidze in the context of
algebraic topology of loop spaces \cite{saneblidze, san_rivera}. They
form a sequence of simple polytopes, one in each dimension $n \geq 1$,
starting with a segment in dimension $1$ and a pentagon in dimension
$2$. The Hochschild polytope in dimension $n$ has
$2^{n - 2} \times (n + 3)$ vertices and its $h$-vector is
${(X+1)}^{n-2}(X^2+(n+1)X+1)$ \cite{combe_hoch}.

In the study of greedy Tamari lattices in \cite{dexter}, a lattice
structure has been introduced on the vertices of each Hochschild
polytope, in such a way that cover relations are edges of the
polytope. These Hochschild lattices have been studied in detail in
\cite{combe_hoch}, who showed that they are congruence-uniform. Some
further work can be found in \cite{poliakova2020,
  poliakova2021,muhle,pilaud_polia}.

\smallskip

Let us consider the following sequence of arbors $t_n$ for $n \geq
1$. The arbor $t_n$ is made of a root vertex with one element, on
which are grafted $n-1$ vertices, all with one element. Here is the
example of $t_4$:
\begin{center}
\begin{tikzpicture}
[grow=down]
\tikzstyle{every node}=[draw,shape=circle]

\node [fill=racines,thick] {$a$}
  child {node {$b$}}
  child {node {$c$}}
  child {node {$d$}}
;
\end{tikzpicture}.
\end{center}

The Hochschild polytope in dimension $n$ is
related to the poset $P_{t_n}$ by the following equalities.
\begin{proposition}\label{conjecture-hoch}
  The number of vertices in the Hochschild polytope in dimension $n$
  is the number of elements of the poset $P_{t_n}$. The $h$-vector of
  the Hochschild polytope is equal to the $h$-vector of $t_n$.
\end{proposition}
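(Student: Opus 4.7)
The plan is to exploit the very simple corolla shape of $t_n$ and compute both quantities directly from the definitions, rather than invoking the general recursions of \Cref{sec:invariants}. The arbor $t_n$ has the property that every non-root vertex $v_j$ satisfies $\down(v_j) = \{v_j\}$, so the defining inequality associated with $v_j$ reduces to $x_{v_j} \leq 1$. Hence each leaf coordinate is $0$ or $1$, while the root inequality becomes $x_a + \sum_j x_{v_j} \leq n$. Conditioning on $k = |\{j : x_{v_j} = 1\}|$, the root coordinate ranges over $\{0, 1, \ldots, n-k\}$.

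First I would count the lattice points. This conditioning gives
\begin{equation*}
  |P_{t_n}| = \sum_{k=0}^{n-1}\binom{n-1}{k}(n-k+1) = (n+1) 2^{n-1} - (n-1) 2^{n-2} = 2^{n-2}(n+3),
\end{equation*}
which matches the known vertex count of $H_n$ recalled in \eqref{cardinal_halo} above.

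Next, for the $h$-vector, I would use the identity $h_t(X) = K_t(X,1) = \sum_{a \in P_t} X^{\nz(a)}$ established in \Cref{sec:invariants}. Splitting the same enumeration according to whether $x_a = 0$ (a single configuration contributing $X^k$) or $x_a \in \{1,\ldots,n-k\}$ (each contributing $X^{k+1}$), one gets
\begin{equation*}
  K_{t_n}(X,1) = \sum_{k=0}^{n-1}\binom{n-1}{k}\bigl(X^k + (n-k)X^{k+1}\bigr).
\end{equation*}
The first sum equals $(1+X)^{n-1}$. For the second, using $\sum_k k\binom{n-1}{k}X^k = (n-1)X(1+X)^{n-2}$, the inner expression $\sum_k (n-k)\binom{n-1}{k}X^k$ simplifies to $(1+X)^{n-2}(n+X)$. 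Factoring out $(1+X)^{n-2}$, I arrive at
\begin{equation*}
  K_{t_n}(X,1) = (1+X)^{n-2}\bigl[(1+X) + X(n+X)\bigr] = (1+X)^{n-2}\bigl(X^2 + (n+1)X + 1\bigr),
\end{equation*}
which is exactly the $h$-vector of the Hochschild polytope from \cite{combe_hoch}.

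There is no substantial obstacle: both assertions fall out of the explicit enumeration that is available because of the corolla shape. The only care needed is in recognizing the factorization at the end so as to match the exact form in \cite{combe_hoch}. As a sanity check, one could alternatively derive the same polynomial by applying \Cref{recurrence_K1} with the starting data $K_v(X,Y) = 1 + XY$ for each single-element sub-tree and $r = 1$ for the root, which yields the same expression after a telescoping binomial identity.
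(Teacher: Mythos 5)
Your proof is correct, and it takes a slightly different route from the paper. The paper obtains both counts by feeding the corolla into its general recursive machinery: the lattice-point count comes from \Cref{calcul_ehr} with the initial data $1+X$ for each leaf, and the $h$-vector from \Cref{recurrence_K1} with $K_{\circ}=1+XY$, which produce exactly the sums $\sum_{\ell}(n+1-\ell)\binom{n-1}{\ell}$ and $\sum_j \binom{n-1}{j}X^j+\sum_j(n-j+1)\binom{n-1}{j-1}X^j$; the remaining binomial simplification is left implicit (``can be computed as above''). You instead bypass the recursions entirely and read the answer off the defining inequalities: since each leaf $v_j$ satisfies $\down(v_j)=\{v_j\}$, the leaf coordinates are $0/1$ and the root coordinate ranges over $\{0,\dots,n-k\}$ when $k$ leaves are switched on, which yields the very same sums directly, together with the identification $h_{t_n}(X)=K_{t_n}(X,1)$ from \Cref{sec:invariants}. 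What your approach buys is self-containedness and an explicit closed-form factorization $(1+X)^{n-2}\bigl(X^2+(n+1)X+1\bigr)$ matching \cite{combe_hoch} verbatim, which the paper does not spell out; what the paper's approach buys is uniformity with the other families treated by the same recursions (e.g.\ the halohedra computations), where a direct coordinate description is less immediate. The only cosmetic caveats are the mild abuse of notation $\down(v_j)=\{v_j\}$ (it is a set of labels, not of vertices) and the degenerate exponent $(1+X)^{n-2}$ at $n=1$, which is harmless since the product is still the polynomial $1+X$.
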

\begin{proof}
  Using the recursive description of the Ehrhart polynomial at $m=1$
  in \Cref{calcul_ehr} and the initial value $F_{\circ,1}=1+X$ for the
  arbor with one vertex, one finds that the number of elements of
  $P_{t_n}$ is
  \begin{equation*}
    \sum_{\ell=0}^{n-1} (n+1-\ell)\binom{n-1}{\ell}.
  \end{equation*}
  By breaking this sum into two, one gets
  $ (n+1) 2^{n-1} - (n-1) 2^{n-2}$. For the finer statement about the
  $h$-vectors, one uses the recursive description in
  \Cref{recurrence_K1} and the initial value $K_{\circ}=1+XY$ for the arbor with
  one vertex. One first gets that
  \begin{equation*}
    K_{t_n}(X,1) = \sum_{j=0}^{n-1} \binom{n-1}{j}X^j + \sum_{1 \leq j \leq n} (n-j+1) \binom{n-1}{j-1} X^j.
  \end{equation*}
  These two simple sums can be computed as above and the result
  follows. 
\end{proof}

In fact, there should be a more precise relationship. Let us consider
the spherical simplicial complex $SC_{n}$ which is the polar dual of the
Hochschild polytope in dimension $n$.
\begin{conjecture}
  In the simplicial complex $SC_{n}$, there exists a facet $C$
  such that the $F$-triangle for the pair $(SC_{n}, C)$ coincides with
  the $F$-triangle associated with the transmuted $M$-triangle of
  $P_{t_n}$.
\end{conjecture}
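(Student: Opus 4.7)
The plan is to prove the conjecture by explicitly computing both the transmuted $M$-triangle of $P_{t_n}$ (and hence its associated $F$-triangle under the formulas of \Cref{app:triangles}) and the $F$-triangle of the pair $(SC_n, C)$ for a well-chosen maximal face $C$, then showing the two bivariate polynomials coincide. The $h$-vector identity already established in \Cref{conjecture-hoch} matches the common diagonal of the two $F$-triangles, so the task is to lift this diagonal identity to the full bivariate refinement.

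On the poset side, the corolla shape of $t_n$ makes \Cref{recurrence_K} especially tractable: the product over the $n-1$ single-element leaves is simply ${(1+XY)}^{n-1}$, and applying the recursion with root size $r=1$ gives a closed form for $K_{t_n}(X,Y)$ as a double binomial sum. Substituting into \eqref{transmuted_M_from_K} yields an explicit transmuted $M$-triangle $\overline{M}_{t_n}$, and then the birational maps in \Cref{app:triangles} produce a candidate $F$-triangle $F^\star_n(X,Y)$. One should aim to simplify $F^\star_n$ into a manifestly nonnegative form, for example as a weighted enumeration of pairs $(G, H)$ with $H$ a subface of $G$ containing specified vertices, which is precisely the combinatorial shape of an $F$-triangle.

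On the polytope side, one would use the combinatorial description of the faces of the Hochschild polytope from \cite{combe_hoch, saneblidze, san_rivera} to get a model of the face poset of $SC_n$. The asymmetry in $t_n$ between root and leaves suggests that the distinguished maximal face $C$ should be the vertex of the Hochschild polytope corresponding to a canonical extremal element of the Hochschild lattice of \cite{combe_hoch}, so that $C$ is forced (up to the two obvious candidates) by compatibility with the arbor orientation. Once $C$ is identified, the $F$-triangle of $(SC_n, C)$ can be written as an enumeration of faces of $SC_n$ according to how many vertices of $C$ they contain, using the recursive structure of the Hochschild polytope.

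The main obstacle will be to match the coefficients of $F^\star_n$ arising from the arbor side with this face enumeration. The most likely route is induction on $n$ using the recursive descriptions on both sides: \Cref{recurrence_K} for the arbor, and the inductive construction of Hochschild polytopes from \cite{saneblidze, combe_hoch} for the polytope, synchronizing the two recursions through the choice of $C$. Computer verification for small $n$, already alluded to in \Cref{subsec:motivation}, should guide the identification of $C$ and suggest the shape of the bijection. A cleaner alternative, if available, would be to exhibit a combinatorial bijection between lattice points of $Q_{t_n}$ and vertices of the Hochschild polytope sending the natural cubical flags of $P_{t_n}$ to the facet structure around $C$; this would automatically upgrade the $h$-vector identity of \Cref{conjecture-hoch} to the full $F$-triangle identity required by the conjecture.
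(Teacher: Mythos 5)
The statement you are addressing is not proved in the paper at all: it is stated as a conjecture, supported only by computer verification for $n \leq 9$ (with the paper noting that for $n \geq 3$ there appear to be exactly four maximal faces with the required property, which already casts doubt on your claim that $C$ is essentially forced by the arbor orientation). So there is no paper proof to compare against, and the real question is whether your text constitutes a proof on its own. It does not: it is a research plan in which every step that carries the mathematical weight is deferred. You never write down the closed form for $K_{t_n}(X,Y)$ (only assert that \Cref{recurrence_K} makes it "tractable"), never perform the substitution into \eqref{transmuted_M_from_K} and the conversion of \Cref{app:triangles} to obtain an explicit candidate $F^\star_n$, never identify the maximal face $C$, never set up the face enumeration of $SC_n$ relative to $C$, and never carry out the inductive or bijective matching of coefficients — each of these appears only as "one should aim", "the most likely route", or "if available". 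In particular the final suggestion, that a bijection between lattice points of $Q_{t_n}$ and vertices of the Hochschild polytope sending cubical flags to the facet structure around $C$ would "automatically upgrade" the $h$-vector identity to the $F$-triangle identity, is not justified: the $F$-triangle records, for every face of $SC_n$, how its vertices split relative to $C$, and no argument is given that a vertex-level bijection compatible with flags controls this bivariate statistic.

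What the paper actually proves in this direction is only the weaker \Cref{conjecture-hoch}: the coincidence of cardinalities and of $h$-vectors, obtained from \Cref{calcul_ehr} and \Cref{recurrence_K1} with the initial data $1+XY$ for a single-element vertex. Your proposal correctly recognizes that this settles the diagonal (via \Cref{transmuted_h}) and that the open content is the lift to the full bivariate triangle, and the ingredients you list (explicit $K_{t_n}$, the birational maps, the recursive structure of Hochschild polytopes from the cited references) are the natural ones. But as written the proposal proves nothing beyond what \Cref{conjecture-hoch} already gives; to turn it into a proof you would need, at minimum, the explicit polynomial $F^\star_n$, a concrete choice of $C$, and a verified identity between $F^\star_n$ and the face enumeration of $(SC_n, C)$, none of which is present.
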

This can been checked concretely in small cases. It has been checked
by computer for $n \leq 9$. For $n \geq 3$, there seems to be exactly
$4$ facets with the expected property.

Moreover, it seems that the core-label-order of the Hochschild lattice
of dimension $n$ could be a transmuted partner of the poset
$P_{t_n}$. The coincidence of the Zeta polynomial with that of $P_{t_n}$ and the
transmutation property for $M$-triangles have been checked by computer
up to $n=9$.


\smallskip

Using \verb|FriCAS| \cite{fricas}, one can guess the following
conjectural formulas for various generating series in $s$ for the arbors $t_n$.
For the Zeta polynomials of the posets $P_{t_n}$:
\begin{equation*}
  1 + \sum_{n \geq 1} \mathsf{Z}_{t_n}(u, 1) s^n \stackrel{?}{=}
  \exp\left( \int \frac{u}{(1-us)(1+s-us)} ds \right). 
\end{equation*}
For the $M$-triangles of the posets $P_{t_n}$:
\begin{equation*}
  1 + \sum_{n \geq 1} M_{t_n}(X,Y) s^n \stackrel{?}{=} \frac{{\left(X Y s - Y s - 1\right)} {\left(X Y s - 1\right)}}{{\left(2 \, X Y s - Y s - 1\right)} {\left(X Y s - Y s + s - 1\right)}}. 
\end{equation*}
For the Ehrhart polynomials of the polytopes $Q_{t_n}$:
\begin{equation*}
  1 + \sum_{n \geq 1} \operatorname{Ehr}_{t_n}(u) s^n \stackrel{?}{=} 
\frac{1}{2} \left( 1 -\frac{s}{{\left(u s + s - 1\right)}} -\frac{s - 1}{{\left(u s + s - 1\right)}^{2}}\right). 
\end{equation*}
For the Laplace transform of volume functions of the polytopes $Q_{t_n}$:
\begin{equation*}
  \sum_{n \geq 1} L_{t_n}(E,V) s^n \stackrel{?}{=}  V \left(\frac{s}{E V s - V s + 1} + \frac{E s}{E s - 1}\right). 
\end{equation*}


\appendix

\section{Conversion formulas}\label{app:triangles}

We gather here the conversion formulas that we use between three
different kinds of polynomials in two variables, namely $F$-triangles,
$M$-triangles and $H$-triangles. These triangles and rational
change-of-variables between them were introduced in \cite{chap_2004,
  chap_full} in the context of cluster fans and generalized
associahedra. They are called \textit{triangles} because of the shape of their
Newton polygons. Each triangle comes with a size parameter $n$, which
is the maximal value of either coordinate in the Newton polygon.

Given a pure simplicial complex of spherical topology $SC$ and a
chosen facet $C$, one can define the $F$-triangle as the
following sum over all faces:
\begin{equation}
  \label{defiF}
  F(X,Y) = \sum_{s \in SC} X^{\posi(s)} Y^{\nega(s)},
\end{equation}
where $\nega(s)$ (resp. $\posi(s)$) is the number of elements in $s$
that belong (resp.\ do not belong) to $C$.

Given a graded poset $P$, one can define the $M$-triangle of $P$ as
the following sum of Möbius numbers weighted using the rank function
$\haut$:
\begin{equation}
  \label{defiM}
  M(X, Y) = \sum_{a \leq b} \mu(a,b)  X^{\haut(a)} Y^{\haut(b)}.
\end{equation}

The $M$-triangles and $F$-triangles are converted to each other by
\begin{equation}
  \label{F_from_M}
  F(X,Y) = M\left(\frac{Y}{Y-X}, \frac{Y-X}{1+Y}\right) \cdot {(1+Y)}^n,
\end{equation}
where $n$ is the size parameter, and the similar birational formula
\begin{equation}
  \label{M_from_F}
  M(X,Y) = F\left(\frac{Y(X-1)}{1-XY},\frac{XY}{1-XY}\right)\cdot {(1-XY)}^n
\end{equation}
in the other direction.

We will not define here the $H$-triangles, that are related to the
cohomology ring of the normal fan of the generalized associahedra. We
give the following formulas because they motivate the definition of
transmutation, as explained at the end of \Cref{sec:transmutation}.

The $M$-triangles and $H$-triangles are converted to each other by
\begin{equation}
  \label{M_from_H}
  m(X,Y) = h\bigl((X - 1) Y / (1 - Y), X / (X - 1)\bigr) \cdot {(1 - Y)}^n
\end{equation}
where $n$ is the size parameter, and the similar birational formula
\begin{equation}
  \label{H_from_M}
  h(X,Y) = m\bigl(Y / (Y - 1), (Y -  1) X / (1 + (Y - 1) X)\bigr) \cdot {(1 + (Y - 1) X)}^n
\end{equation}
for the conversion in the other way.

\section{Two summation lemmas}

These lemmas are used in \Cref{sec:A}.

\begin{lemma}\label{summing_1}
  One has
  \begin{equation}
    \sum_{j \geq 0} \frac{1}{(k-j)!} \prod_{r=2}^{\ell-j} (x-r+k-j) = \frac{1}{k!} \frac{1}{x-1} \prod_{r=1}^{\ell} (x-r+k).
  \end{equation}
\end{lemma}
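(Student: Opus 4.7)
The plan is to recast the identity as a truncated hypergeometric sum and reduce it via the classical partial-sum identity $\sum_{i=0}^{k}(a)_i/i! = (a+1)_k/k!$, which is equivalent to the hockey-stick identity $\sum_{i=0}^{k}\binom{a+i-1}{i} = \binom{a+k}{k}$.

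First, I would set $A = x+k-\ell$ and rewrite the inner product in rising-Pochhammer form, namely $\prod_{r=2}^{\ell-j}(x-r+k-j) = (A)_{\ell-j-1}$ for $\ell-j \geq 1$. The paper's ad hoc conventions $\prod_{r=2}^{1}\alpha_r = 1$ and $\prod_{r=2}^{0}\alpha_r = 1/\alpha_1$ then match exactly $(A)_{0} = 1$ (at $\ell-j=1$) and $(A)_{-1} = 1/(A-1) = 1/(x+k-\ell-1)$ (at $\ell-j=0$, i.e.\ $j=\ell$), so the whole summand has a uniform Pochhammer expression.

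Next, I would substitute $i = k-j$ and set $B = \ell-k-1$. Then $1/(k-j)! = 1/i!$ truncates the sum to $0 \leq i \leq k$, and $\ell-j-1 = B+i$. Using the general factorization $(A)_{B+i} = (A)_B (A+B)_i$ and observing $A+B = x-1$, the left-hand side becomes
\[
(A)_B \sum_{i=0}^{k}\frac{(x-1)_i}{i!}.
\]
The inner sum collapses to $(x)_k/k!$ by the hockey-stick identity above.

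It remains to simplify $(A)_B (x)_k$. The telescoping $(A)_\ell = (A)_B \cdot (A+B)_{k+1} = (A)_B\,(x-1)_{k+1} = (A)_B (x-1)(x)_k$ rearranges to $(A)_B (x)_k = (A)_\ell/(x-1)$. Dividing by $k!$ and unpacking $(A)_\ell = \prod_{r=1}^{\ell}(x-r+k)$ gives the right-hand side. The only delicate point is the initial bookkeeping: one must check that the paper's two-case convention for $\prod_{r=2}^{\ell-j}$ at $\ell-j\in\{0,1\}$ is consistent with the uniform Pochhammer extension used throughout the manipulation, after which the argument is purely formal.
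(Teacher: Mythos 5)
Your proof is correct. It is in substance the same Vandermonde-type summation as the paper's, but packaged differently: the paper factors out the $j=0$ term, recognizes the remaining sum as $\pFq{2}{1}{-k,1}{-k+2-x}$, and evaluates it by Chu--Vandermonde to $\frac{x+k-1}{x-1}$, whereas you reverse the summation order via $i=k-j$, factor the Pochhammer symbol as $(A)_{B+i}=(A)_B(A+B)_i$ with $A+B=x-1$, and finish with the hockey-stick (parallel summation) identity $\sum_{i=0}^k (x-1)_i/i!=(x)_k/k!$ plus a one-line telescoping. Since the hockey-stick identity is exactly the $b=1$ case of Chu--Vandermonde read backwards, the two arguments are equivalent, but yours is slightly more elementary (no hypergeometric evaluation theorem is invoked, only a finite telescoping sum), and it has the added merit of explicitly verifying that the paper's conventions for $\prod_{r=2}^{\ell-j}$ at $\ell-j=1$ and $\ell-j=0$ agree with the uniform extension $(A)_0=1$ and $(A)_{-1}=1/(A-1)$, a point the paper's proof passes over silently; note only that, like the paper's conventions, your argument implicitly assumes $k\leq\ell$ so that no terms with $\ell-j<0$ occur, which is the regime in which the lemma is applied.
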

\begin{proof}
  Write the left hand side as the value at $j=0$, namely
  \begin{equation*}
    \frac{1}{k!} \prod_{r=2}^{\ell} (x-r+k),
  \end{equation*}
  times the hypergeometric series
  \begin{equation*}
    \pFq{2}{1}{-k,1}{-k+2-x}.
  \end{equation*}
  The later evaluates to $\frac{x+k-1}{x-1}$ by Chu-Vandermonde.
\end{proof}


\begin{lemma}\label{summing_2}
  One has
  \begin{equation}
    \sum_{j \geq 0} \frac{1}{(k-j)!} (\ell-j) \prod_{r=2}^{\ell-j} (x-r+k-j) = \frac{1}{k!} \frac{x\ell-k}{x (x-1)} \prod_{r=1}^{\ell} (x-r+k).
  \end{equation}
\end{lemma}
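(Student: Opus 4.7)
The plan is to derive Lemma \ref{summing_2} from two applications of Lemma \ref{summing_1}, in the same hypergeometric spirit. The key algebraic step is the splitting
\begin{equation*}
(\ell - j) \;=\; (x - 1 + k - j) - (x - 1 + k - \ell),
\end{equation*}
which rewrites the left-hand side $L_2$ of Lemma \ref{summing_2} as
\begin{equation*}
L_2 \;=\; \sum_{j \geq 0} \frac{x - 1 + k - j}{(k - j)!} \prod_{r=2}^{\ell - j}(x - r + k - j) \;-\; (x - 1 + k - \ell)\, L_1,
\end{equation*}
where $L_1$ denotes the left-hand side of Lemma \ref{summing_1}.

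For the first sum, absorb the factor $(x - 1 + k - j)$ into the product as the missing $r = 1$ term: $(x - 1 + k - j) \prod_{r=2}^{\ell - j}(x - r + k - j) = \prod_{r=1}^{\ell - j}(x - r + k - j)$. After a shift $r' = r + 1$, this product becomes exactly the one appearing in Lemma \ref{summing_1} with shifted parameters $(\ell, x) \mapsto (\ell + 1, x + 1)$. Applying Lemma \ref{summing_1} with those shifted parameters and then pulling off the $(r = 1)$ factor $(x + k)$ from the resulting product yields
\begin{equation*}
\sum_{j \geq 0} \frac{1}{(k - j)!} \prod_{r=1}^{\ell - j}(x - r + k - j) \;=\; \frac{x + k}{k!\,x} \prod_{r=1}^{\ell}(x - r + k).
\end{equation*}

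Combining this with Lemma \ref{summing_1} directly, $L_1 = \frac{1}{k!(x - 1)} \prod_{r=1}^{\ell}(x - r + k)$, and factoring out the common product $\prod_{r=1}^{\ell}(x - r + k)/k!$ gives
\begin{equation*}
L_2 \;=\; \frac{\prod_{r=1}^{\ell}(x - r + k)}{k!} \left[\frac{x + k}{x} - \frac{x - 1 + k - \ell}{x - 1}\right].
\end{equation*}
Placing the bracketed expression over the common denominator $x(x - 1)$, the numerator simplifies as $(x + k)(x - 1) - x(x - 1 + k - \ell) = x \ell - k$, which reproduces exactly the right-hand side of Lemma \ref{summing_2}.

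The only delicate point is the identity $(x - 1 + k - j)\prod_{r=2}^{\ell - j} = \prod_{r=1}^{\ell - j}$ used to recognize the first sum as a shifted instance of Lemma \ref{summing_1}: it holds term by term whenever the product on the right is non-empty, which is automatic in the parameter ranges where Lemma \ref{summing_1} itself applies. All remaining manipulations are routine algebra, so the proof structure mirrors that of the previous lemma, with one extra splitting step replacing its single Chu-Vandermonde evaluation by two.
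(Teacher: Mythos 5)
Your proof is correct, but it takes a genuinely different route from the paper's. The paper proves \Cref{summing_2} by the same hypergeometric scheme as \Cref{summing_1}: it writes the sum as its $j=0$ term times a terminating $\pFq{3}{2}{-k,1,1-\ell}{-k+2-x,-\ell}$, transforms this (via a contiguity relation that is only checked by computer algebra, as the footnote admits) into a combination of two ${}_2F_1$'s, and evaluates each by Chu--Vandermonde. You bypass the ${}_3F_2$ altogether: the splitting $(\ell-j)=(x-1+k-j)-(x-1+k-\ell)$ reduces the sum to two instances of \Cref{summing_1}, one verbatim and one with $(\ell,x)$ replaced by $(\ell+1,x+1)$ after absorbing the extra factor as the missing $r=1$ term of the product; I checked the re-indexing, the shifted evaluation $\frac{x+k}{k!\,x}\prod_{r=1}^{\ell}(x-r+k)$, and the final simplification of the bracket to $(x\ell-k)/\bigl(x(x-1)\bigr)$. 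In effect your splitting is the contiguity relation the paper delegates to SageMath/Maxima, made explicit, so your argument is more elementary and self-contained, needing only the already-proved \Cref{summing_1}; the paper's version has the virtue of being uniform in style with its proof of that lemma. One small sharpening: your caveat that $(x-1+k-j)\prod_{r=2}^{\ell-j}(x-r+k-j)=\prod_{r=1}^{\ell-j}(x-r+k-j)$ holds ``when the product is non-empty'' can be upgraded, since with the convention stated at the beginning of \Cref{sec:A} (the product $\prod_{r=2}^{\ell}\alpha_r$ equals $1/\alpha_1$ at $\ell=0$) the identity also holds at $\ell-j=0$, and the two pieces of the split $j=\ell$ term then cancel as they must, so no boundary term is lost.
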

\begin{proof}
  Write the left hand side as the value at $j=0$, namely
  \begin{equation*}
    \frac{1}{k!} \ell \prod_{r=2}^{\ell} (x-r+k),
  \end{equation*}
  times the hypergeometric series
  \begin{equation*}
    \pFq{3}{2}{-k,1,1-\ell}{-k+2-x,-\ell}.
  \end{equation*}
  This hypergeometric series can be transformed\footnote{We trust SageMath or Maxima here. This should follow from some contiguity relation.} into
  \begin{equation*}
    \pFq{2}{1}{-k,1}{-k+2-x} -\frac{k}{\ell (k+x-2)} \pFq{2}{1}{1-k,2}{-k-x+3}.
  \end{equation*}
  The later evaluates to $\frac{(-k+\ell x)(x+k-1)}{\ell x(x-1)}$ by Chu-Vandermonde.
\end{proof}



\section{Some computations for halohedra}\label{calcul_halo}

Following \Cref{hvecteur_halo}, let us define two power series in the
variable $s$:
\begin{equation}
  \label{defiAB}
  A = \sum_{n\geq 1} \sum_{j=1}^{n} \binom{n-1}{j-1}^2 X^j s^n\quad\text{and}\quad
  B = \sum_{n\geq 1} \sum_{j=0}^{n-1} \binom{n-1}{j} \binom{n}{j} X^j s^n.
\end{equation}

The aim of this section is to prove that $A+B$ coincide with the known
generating function for $h$-vectors of halohedra, characterized by
\eqref{eq_alg_halo} and its first few terms. This is done with the
help of the software \verb|FriCAS| \cite{fricas} and the
\verb|ore_algebra| package for \verb|sagemath| \cite{ore_algebra,
  sagemath}.

Let us write $P$ for the polynomial ${(X-1)}^2 s^2 - 2 (X+1)s +1$.

First, one can guess an algebraic equation satisfied by $A$:  
\begin{equation}
  \label{equation_A}
  P A^2 = X^2 s^2.
\end{equation}
The operator $s \partial_s$ acts on both sides by multiplication by
$2$. After division by $2 A$, one deduces a possible differential
equation for $A$:  
\begin{equation*}
  s P \partial_s A + (X s + s - 1) A = 0,
\end{equation*}
with initial condition $A_0 = 0$. This is equivalent to the following
possible recurrence for the coefficients of $A$:  
\begin{equation*}
  {(X-1)}^2 n A_n - (X+1) (2n+1) A_{n+1} + (n+1) A_{n+2} = 0,
\end{equation*}
with initial conditions $A_0 =0$ and $A_1 = X$.  One then checks  
that the formula~\eqref{defiAB} for $A_n$ indeed satisfies this
recurrence with the same initial conditions. So $A$ is the unique
power series solution of~\eqref{equation_A} with zero constant term.

Let us now turn to the case of $B$. First, one can guess an algebraic
equation satisfied by $B$: 
\begin{equation}
  \label{equation_B}
  P (B + B^2) = s.
\end{equation}
One can also guess a differential equation for $B$:  
\begin{equation*}
  X s - s + 1 + 2 (X s-s + 1) B + (X s-s-1) P \partial_s B = 0
\end{equation*}
with initial condition $B_0 = 0$. Let us consider in the ring
$\QQ[X,s,B,\partial_{s}B]$ the ideal generated by~\eqref{equation_B},
its formal derivative with respect to $s$ and this differential
equation. Computing the elimination ideal with respect to
$\partial_s B$ gives back the principal ideal generated by~\eqref{equation_B}. This proves that the differential equation is
compatible with~\eqref{equation_B}. This differential equation is
equivalent to the following possible recurrence for the coefficients
of $B$: 
\begin{multline*}
  {(X-1)}^3 n  B_n -(X-1) (3 n X+n+3 X-1) B_{n+1} \\+(3 n X+n+6 X+4) B_{n+2} -(n+3) B_{n+3} = 0,
\end{multline*}
with initial conditions $B_0 = 0$, $B_1 = 1$ and $B_2 = 1+2X$.

One then checks that the formula~\eqref{defiAB} for $B_n$ indeed 
satisfies this recurrence with the same initial conditions. So one can
conclude that $B$ is the unique power series solution of~\eqref{equation_B} with the given initial terms.

From the algebraic equations~\eqref{equation_A} and~\eqref{equation_B}, one gets similar equations for $A/s$ and $B/s$.
One then obtains by elimination an algebraic
equation for the sum $(A+B)/s$, hence 
\begin{equation}
  P \cdot ((A+B) + {(A+B)}^2) = X s^2 + (X + 1) s.
\end{equation}
Note that this requires choosing the correct factor among two in the
reducible polynomial for $(A+B)/s$ obtained by elimination. This is
done by identifying the constant term.

This implies the expected equality between $A+B$ and the generating
series of $h$-vectors of halohedra defined by~\eqref{eq_alg_halo},
after checking enough initial terms.

\bibliographystyle{alpha} \bibliography{arbor_plain}

\end{document}